\renewcommand{\epsilon}{\varepsilon}
\numberwithin{equation}{section}
\newtheoremstyle{thmlemcorr}{10pt}{10pt}{\itshape}{}{\bfseries}{.}{10pt}{{\thmname{#1}\thmnumber{ #2}\thmnote{ (#3)}}}
\newtheoremstyle{thmlemcorr*}{10pt}{10pt}{\itshape}{}{\bfseries}{.}\newline{{\thmname{#1}\thmnumber{ #2}\thmnote{ (#3)}}}
\newtheoremstyle{defi}{10pt}{10pt}{\itshape}{}{\bfseries}{.}{10pt}{{\thmname{#1}\thmnumber{ #2}\thmnote{ (#3)}}}
\newtheoremstyle{remexample}{10pt}{10pt}{}{}{\bfseries}{.}{10pt}{{\thmname{#1}\thmnumber{ #2}\thmnote{ (#3)}}}
\newtheoremstyle{ass}{10pt}{10pt}{}{}{\bfseries}{.}{10pt}{{\thmname{#1}\thmnumber{ A#2}\thmnote{ (#3)}}}
\theoremstyle{thmlemcorr}
\newtheorem{theorem}{Theorem}
\numberwithin{theorem}{section}
\newtheorem{lemma}[theorem]{Lemma}
\newtheorem{proposition}[theorem]{Proposition}
\theoremstyle{thmlemcorr*}
\newtheorem{theorem*}{Theorem}
\newtheorem{lemma*}[theorem]{Lemma}
\newtheorem{corollary*}[theorem]{Corollary}
\newtheorem{proposition*}[theorem]{Proposition}
\newtheorem{problem*}[theorem]{Problem}
\newtheorem{conjecture*}[theorem]{Conjecture}
\newtheorem{theoremconjecture*}[theorem]{Theorem/Conjecture}
\theoremstyle{defi}
\theoremstyle{remexample}
\newtheorem{remark}[theorem]{Remark}
\theoremstyle{ass}
\newcommand{\Acal}{\mathcal{A}}
\newcommand{\Bcal}{\mathcal{B}}
\newcommand{\Dcal}{\mathcal{D}}
\newcommand{\Hcal}{\mathcal{H}}
\newcommand{\Scal}{\mathcal{S}}
\newcommand{\Zcal}{\mathcal{Z}}
\newcommand{\Ibb}{\mathbb{I}}
\newcommand{\Rbb}{\mathbb{R}}
\DeclareMathOperator{\diverg}{div}
\DeclareMathOperator{\dist}{dist}
\DeclareMathOperator{\rank}{rank}
\DeclareMathOperator{\supp}{supp}
\newcommand{\norm}[1]{\|#1\|}
\newcommand{\normb}[1]{\bigl\|#1\bigr\|}
\newcommand{\abs}[1]{|#1|}
\newcommand{\absB}[1]{\Bigl|#1\Bigr|}
\newcommand{\dd}{\;\mathrm{d}}
\newcommand{\N}{\mathbb{N}}
\newcommand{\R}{\mathbb{R}}
\newcommand{\weakly}{\rightharpoonup}
\newcommand{\weaklystar}{\overset{*}\rightharpoonup}
\newcommand{\eps}{\epsilon}
\newcommand{\length}{l}
\def\XXint#1#2#3{{\setbox0=\hbox{$#1{#2#3}{\int}$} 
\vcenter{\hbox{$#2#3$}}\kern-.5\wd0}}
\title[Asymptotic spectral analysis in semiconductor nanowire heterostructures]{Asymptotic spectral analysis in \\semiconductor nanowire heterostructures}
\author{Carolin Kreisbeck}
\address{Fakult\"at f\"ur Mathematik, 
Universit\"at Regensburg, 
93040 Regensburg, Germany}
\email{carolin.kreisbeck@mathematik.uni-r.de}
\author{Lu\'{i}sa Mascarenhas}
\address{Departamento de Matem\'{a}tica and Centro de Matem\'{a}tica e Aplica\c{c}\~{o}es, 
Faculdade de Ci\^{e}ncias e Tecnologia, Universidade Nova de Lisboa, 
Quinta da Torre, 2829-516 Caparica, Portugal}
\email{mascar@fct.unl.pt}
\begin{document}
%
%
%
%
\begin{abstract}
Mathematical settings in which heterogeneous structures affect electron\break transport 
through a tube-shaped quantum waveguide are studied, highlighting the interaction between heterogeneities and geometric parameters 
like curvature and torsion.
First, the macroscopic behavior of a nanowire made of composite fibers with microscopic periodic texture is analyzed, which
amounts to determining the asymptotic behavior of the spectrum of an
elliptic Dirichlet eigenvalue problem with finely oscillating coefficients
in a tube with shrinking cross section. A suitable formal expansion suggests that the effective one-dimensional limit problem is of 
Sturm-Liouville type and yields the explicit formula for the underlying potential. In the torsion-free case, these findings are made rigorous 
by performing homogenization and 3d-1d dimension reduction for the two-scale problem in a variational framework by means of $\Gamma$-convergence. 
Second, waveguides with non-oscillating inhomogeneities in the cross section are investigated. This leads to explicit criteria for 
propagation and localization of eigenmodes. 

\vspace{8pt}

\noindent\textsc{MSC(2010):} 49R05 (primary); 49J45, 35P20, 78A50, 81Q15.

\noindent\textsc{Keywords:}\hspace{0.02cm}  spectral analysis, dimension reduction, homogenization, $\Gamma$-convergence, quantum waveguides.

\vspace{8pt}

\noindent\textsc{Date:} \today.
\end{abstract}

\maketitle


\section{Introduction}
Among various applications~\cite{BullMaterSci07, Cao2004}, nanowires are considered promising building blocks for new 
developments in the construction of computing devices~\cite{Nature11} and 
solar energy conversion~\cite{MaterChem12}. Thus, the study of nanowires has stimulated an active field of research in the physics community over the recent years.      
With a diameter of the order of a nanometer, nanowires are almost one-dimensional objects which have special physical and chemical properties 
different from those of their bulk counterparts. It is well-known that these properties are sensitive to the material composition and to the geometry of the nanowire.
Both effects have been studied separately, experimentally~\cite{BullMaterSci07, Nature11, RoyalSoc, MaterChem12, Nature02, Nanoletter1} as well as in the 
mathematics literature~\cite{DucEx95, CDFK2005, BMT07, BoFr2010, Krejcirik12, AllPia05, Allaire08}.
Regarding the interplay between the two characteristics (material heterogeneities and geometry), however, there are hardly any quantitative results available.
Our goal is to gain new insight into the effect of heterogeneous structures on electron transport through a tube-shaped quantum waveguide, 
highlighting their interaction with curvature and torsion.
In the following, we address two set-ups both of which feature heterostructures in lateral direction, but with different intrinsic length-scales compared to
the thickness of the wire. 
First, we deal with nanowires with a microscopic periodic texture of composite fibers, and second, we study waveguides 
with a cross section made of various material components, such as the experimentally 
relevant core-multi-shell nanowire heterostructures~\cite{Nature02, RoyalSoc, BullMaterSci07, Nature11}. 
In both cases, effective one-dimensional limit problems are derived explicitly by the use of analytical tools. Our rigorous analysis can serve as a basis for 
numerical simulations of sophisticated devices involving heterogeneous bent and twisted nanowires.

Ballistic transport in modulated semiconductor devices, such as nanowire heterostructures, is governed by the effective-mass Schr\"odinger equation
\begin{align}\label{effmass_Schroedinger}
-\frac{\hslash^2}{2}\diverg \Bigl(\frac{1}{m(z)}{\nabla \psi(z)}\Bigr) + V(z)\,\psi(z)=E\psi(z), \qquad z\in \R^3,
\end{align}
where $V$ is a sharp potential that is zero inside the confinement imposed by the device geometry and infinite outside. The quantities 
$\psi$ and $E$ stand for the wave function and the energy, respectively.
The spatial position-dependence of the effective mass $m$ allows to model material consisting of different components. 
In what follows, we use a notation that is more common in the mathematical literature and rewrite~\eqref{effmass_Schroedinger} as an 
elliptic Dirichlet eigenvalue problem of the form
\begin{align}\label{EVP}
\begin{cases}
     -\diverg (A \nabla u)=\lambda u & \ {\rm in\ } \Omega,\\
   \ \hspace{1.55cm}{u=0\ }& \ {\rm on\ } \partial\Omega,\\ 
 \end{cases}
 \end{align}
where $\Omega\subset\R^3$ is the domain occupied by the 
quantum waveguide, $A=A(z)$ with $z\in \Omega$ encodes the material heterogeneities, $u$ is the wave function, and $\lambda$ the corresponding energy level. 
In this work we consider $\Omega$ to be a curved and twisted thin tube modeling a nanowire, and we assume that $A$ is constant in the 
longitudinal direction. 
 
To specify the underlying length-scales of the problem,
we introduce the parameters $\delta>0$ and $\eps>0$.
Precisely, $\delta$ refers to the thickness of the nanowire and $\eps$ represents the length-scale of the periodic material heterogeneities. 
Let $\Omega_\delta$ be a tube of finite length $l$ with cross section $\delta\omega$, where $\omega$ is a fixed regular and bounded domain in $\R^2$. 
Let $A _{\eps,\delta}$ represent the material properties, which are supposed to vary only on the cross section $\delta\omega$. We will define $A _{\eps,\delta}$ later 
in Section~\ref{subsec:geometry} after the geometry of $\Omega_\delta$ has been specified. 
Then, the eigenvalue problem~\eqref{EVP} becomes
\begin{align}\label{EVPepsilondelta_nonrescaled}
\begin{cases}
     -\diverg (A _{\eps,\delta} \nabla u_{\eps, \delta})=\lambda_{\eps, \delta} u_{\eps, \delta} & \ {\rm in\ } \Omega_\delta,\\
   \hspace{2cm}{u_{\eps,\delta}=0\ }& \ {\rm on\ } \partial\Omega_\delta.\\ 
 \end{cases}
 \end{align}
Since $\Omega_{\delta}$ is bounded, the spectrum $\Sigma_{\eps,\delta}$ of \eqref{EVPepsilondelta_nonrescaled} is 
discrete and one may write $\Sigma_{\eps,\delta}=\{\lambda_{\eps,\delta}^{(j)}\}_{j\in \N_0}$, where 
$0<\lambda_{\eps,\delta}^{(0)} < \lambda_{\eps,\delta}^{(1)}\leq \lambda_{\eps,\delta}^{(2)}\leq \ldots$, with each eigenvalue repeated 
according to its multiplicity, and $\displaystyle \lim_{j\to \infty}\lambda_{\eps,\delta}^{(j)}=\infty$.

The goal is to derive the asymptotic behavior of the spectrum $\Sigma_{\eps, \delta}$ of \eqref{EVPepsilondelta_nonrescaled} as $\eps$ and $\delta$ tend to zero.
In fact, the result depends crucially on the relation between the two parameters. 
The regime $\delta=1$ corresponds to a classical homogenization problem in the fix
domain $\Omega_1$ as for instance in~\cite{JikovKozlovOleinik94,CioDon99},
while the case $\eps=1$ models thin tubes with inhomogeneous, but non-oscillating, cross section.  
An interaction between the effects of homogenization and dimension reduction is observed when the two parameters are of the same
order, i.e.~$\delta=\eps$.
A hint on the limiting behavior of the spectrum is obtained using the method of 
asymptotic expansion, which has proven very efficient in the periodic framework~\cite{BenLionsPapa78, Vanninathan81, CioDon99}. 

If the transversal heterogeneities oscillate at the same rate as the thickness of the wave\-guides, 
these formal computations suggest that the spectrum of \eqref{EVPepsilondelta_nonrescaled} is governed
by a contribution 
of order $\eps^{-2}$ resulting from the homogenization of the two-dimen\-sional eigenvalue problem in the cross section, and a zero-order term 
representing the effective propagation along the center curve of the waveguide.
Precisely, for an element $\lambda_\eps^{(j)}\in \Sigma_\eps:=\Sigma_{\eps, \eps}$ with $j\in \N_0$ one finds
\begin{align}\label{intro:expansion}
 \lambda_\eps^{(j)}=\frac{\mu_H}{\eps^2} + \eta_P^{(j)} + O(\eps).
\end{align}
Here, $\mu_H$ stands for the first eigenvalue of the homogenized cross section problem and $\{\eta_P^{(j)}\}_{j\in \N_0}$ represents the spectrum of a
one-dimensional Sturm-Liouville problem, the potential of which is given explicitly in terms of the waveguide geometry and the material properties.
In this work we provide the rigorous proof of~\eqref{intro:expansion} for untwisted waveguides, while the exact justification of the 
formal expansion remains open in the general case with torsion. 
Indeed, by formulating the two-scale problem~\eqref{EVPepsilondelta_nonrescaled}
in a variational framework and by applying $\Gamma$-convergence, we perform simultaneously homogenization and 3d-1d dimension reduction. 
The detailed statement of the result is 
formulated in Theorem~\ref{theo:mainresult_cylinder}, which partially generalizes the findings for homogeneous waveguides in~\cite{BMT07} 
(see also the references therein). 
Our result shows that, qualitatively, transport through a nanowire made of microscopically periodic fibers resembles 
transport through a homogeneous nanowire in the sense that both give rise to propagation of electrons.

For waveguides with non-oscillating inhomogeneities, however, the situation is different.
The interesting finding is that propagation behavior occurs if an explicit propagation criterion is satisfied, otherwise, electron transport
will be dominated by localization around specific points along the wire. A similar phenomenon was discovered for waveguides with
varying cross section~\cite{FriedlanderSolomyak07, FriedlanderSolomyak09} and homogeneous waveguides with Robin boundary conditions~\cite{BMT_Robin}.
Our propagation criterion requires a certain function $h$, defined on the central curve of the wire, to be constant. 
The function $h$ relates the geometric data of the wire with an expression 
measuring the symmetry of the cross section and its material structure. The precise condition, together with the spectral convergence 
of \eqref{EVPepsilondelta_nonrescaled} to a Sturm-Liouville type problem, is stated in Theorem~\ref{theo:inhomo_propagation}.
For point symmetric cross sections, $h$ vanishes identically, independently of the geometry. So, from the viewpoint of applications,
symmetrically constructed nanowires are propagation robust devices with respect to bending and twisting. For a non-symmetric material distribution in the cross section,
there are sharp conditions on curvature and on torsion that still allow electrons to propagate.  
If $h$ attains its global minimum at isolated points, so that in particular the propagation criterion is violated, one observes a concentration of the 
eigenmodes of \eqref{EVPepsilondelta_nonrescaled} around these points.
After an appropriate blow-up, we investigate the effective local behavior of the system to find that 
the eigenmodes of~\eqref{EVPepsilondelta_nonrescaled} behave like the eigenmodes of a one-dimensional harmonic oscillator.   
In conclusion, we characterize quantitatively the relation between heterogeneous material structures and device geometry for transport through nanowires.
In particular, we give explicit conditions saying if and how
electron transport can be controlled by simple geometrical operations. This can be viewed as a theoretical concept for the building of mechanical
quantum switching devices.

The paper is structured as follows. In Section~\ref{sec:preliminaries}, after fixing notation, we present some preliminary results that specify 
the setting and provide important technical tools. 
We start with the modeling of the waveguide geometry using the Tang frame in Section~\ref{subsec:geometry}, and introduce suitable energy functionals to reformulate our problem 
in a variational context. In Section~\ref{subsec:change}, as it is common in dimension reduction problems, a
change of variables is performed, which allows us to work on a fixed domain. 
We recall briefly the formal method of asymptotic expansion in Section~\ref{subsec:crosssection}, and explain the 
$\Gamma$-convergence approach to spectral asymptotic analysis in Section~\ref{subsec:Gamma}. 
Section~\ref{sec:case_oscillatingcoefficients} is concerned with the study of waveguides with microscopic periodic fibers, 
while Section~\ref{sec:case_inhomogeneous} 
addresses transport through nanowires 
with inhomogeneous cross sections. In both cases, the first essential step is to determine the asymptotics of the 
corresponding cross section eigenvalue problem
(see~Sections~\ref{subsec:asymptotics} and~\ref{subsec:asymptotics_inhomogeneous}, respectively). The statements of
the effective one-dimensional propagation models, along with their proofs, are presented in Sections~\ref{subsec:mainresult} and~\ref{subsec:propagation}.
We discuss various results regarding 
localization phenomena in Section~\ref{subsec:localization}.

\section{Preliminaries}\label{sec:preliminaries}

\subsection{Notations}\label{subsec:notations}
We use the standard notation for Lebesgue and Sobolev spaces.
Partial derivatives with respect to the $j$th component of a variable $x$ are denoted by $\partial_j^{x}$. 
If it is clear from the context, we will omit the variable, writing simply $\partial_j$. The same applies to other partial differential 
operators like $\diverg$, $\nabla$ or $\Delta$.

Regarding eigenvalues and eigenfunctions, the superscript notation $(\lambda^{(j)}, u^{(j)})$ with ${j\in \N_0}$ will be 
used to refer to the eigenpair of order $(j+1)$. 
Throughout this paper we address elliptic spectral problems on bounded domains.
So, $\{\lambda^{(j)}\}_j$ is always a non-decreasing sequence, and 
$\{u^{(j)}\}_j$ may be chosen to form an orthonormal basis of eigenfunctions for the $L^2$-space.
To simplify notation we often skip the index when dealing with the first eigenpair, 
meaning that we write $\lambda$ and $u$ instead of $\lambda^{(0)}$ and $u^{(0)}$.

In what follows, we apply the sum convention on repeated indices.
Note that subsequences are not relabeled, and that the actual value of a constant may 
change from line to line.
\vskip2mm

\subsection{Structure of the bent and twisted waveguide}\label{subsec:geometry}
Let $r: [0,\length]\to \R^3, s\mapsto r(s)$ be a simple $C^2$-curve parametrized by arc length. 
Denoting by $v'$ the derivative of $v$ with respect to the variable $s$, $T=r'$ will stand for the unitary tangent vector.
The commonly used Frenet local system of coordinates $(T, N, B)$, where $N$ and $B$ represent the normal and the binormal, 
respectively, does not only require the curve to be $C^3$, but also has two disadvantages concerning our approach: 
first, in order to define the normal vector $N$, one has to impose $T'\not= 0$, excluding straight lines segments, 
second, the two-dimensional system $(N,B)$ rotates around the tangent direction. Since we are concerned with bending 
(curvature) and twisting (torsion) effects, it is desirable to allow the curvature to vanish and, in order to keep better control 
of the twisting parameter, to choose a frame without intrinsic rotation around $T$. Therefore, we consider as reference system the 
Tang frame (see~\cite{Tang70, Bishop75}), i.e.~an orthonormal, positively oriented system $(T, X, Y)=(T(s), X(s), Y(s))$ satisfying, for all $s\in[0, \length]$,
\begin{align*}
 X'(s)=\zeta (s) T(s)\, ,\quad Y'(s)\,= \vartheta (s) T (s)\,, \quad T'(s) =-\zeta(s)  X(s) - \vartheta (s) Y(s).
\end{align*}
In fact, with the curvature denoted by $k$ ($k(s):=\abs{r''(s)}$), 
it turns out that such a system of differential equations can be solved for $r$ merely of class $C^2$ by taking
\begin{align*}  
\zeta (s)= -k(s)\cos\alpha (s)\quad  {\rm and}\quad  \vartheta (s) = - k(s)\sin\alpha(s)\,,\qquad s\in [0,\length],
\end{align*}
with a suitable function $\alpha$, which represents the angle of rotation of $(N, B)$ around $T$, whenever the Frenet system is well-defined. 
We also refer to the recent work ~\cite{ Krejcirik12}, where the existence of a Tang frame is established even for $r \in W^{2,\infty}(0, l; \R^3)$.

Let $\omega\subset \Rbb^ 2$ be a bounded and connected Lipschitz domain. After a convenient zoom, $\omega$ designates the 
cross section of the waveguides
under consideration. 
Let $\theta=\theta(s)$ be a given rotation angle that is assumed to be regular enough. For $\delta >0$, we may define the following domain:
\begin{align*}
 \Omega_\delta = \{z\in \R^3: z=r(s) +\delta x_1 X_\theta(s)+\delta x_2 Y_\theta(s),\ s\in [0,\length],\ x=(x_1, x_2)\in \delta\omega\},
\end{align*}
where $X_\theta(s)=\cos\theta(s) X(s)+\sin\theta(s)Y(s)$ and $Y_\theta(s)=-\sin\theta(s)X(s)+\cos\theta(s)Y(s)$ for $s\in[0,\length]$. 
In fact, if $\delta$ is sufficiently small, $\Omega_\delta$ is tube-shaped with central curve $r$ of length $\length$ and cross section $\delta\omega$ 
twisted around $r$ by an angle $\theta$.

Apart from the shape of the cross section $\omega$, two other geometric parameters will play an important role in our spectral analysis: 
the curvature $k$ and the torsion $\tau:=\theta'$.

Now that the geometry of $\Omega_\delta$ is settled, we define the matrix $A _{\eps,\delta}$ as follows. Let $Y:=(0, 1)^2$ and let $A$ represent 
a $Y$-periodic and measurable function 
defined in $\mathbb{R}^2$, taking values in the set of all symmetric $2\times 2$ matrices. We suppose that $A$ is bounded and uniformly coercive, i.e., 
there exist two positive constants $c$ and $d$ such that
\begin{align*}
c |\eta|^2\le A(y)\eta\cdot\eta\le d|\eta|^2
\end{align*}
for almost every $y\in Y$ and for all $\eta\in\mathbb{R}^2.$ Then, $A(x/\eps)$ is an $(\eps Y)$-periodic function of $x\in\mathbb{R}^2$. 
We define for all $z\in \Omega_\delta$, recalling that $z=r(s) +\delta x_1 X_\theta(s)+\delta x_2 Y_\theta(s)$,
\begin{align*}
A _{\eps,\delta} (z):= A(x/\eps), \quad x=( x_1, x_2)\in\omega.
\end{align*}
For the sake of simplicity, we will discuss only the isotropic case, meaning that we consider $A (y)= a (y)\Ibb$ with $a:\R^2\to \R$ $Y$-periodic and $\Ibb$ the 
identity matrix in $\R^{2\times 2}$. 
The general case can be treated with minor changes. In the sequel, we will use the notation $a_\eps$ for $a(\cdot/\eps)$.

\subsection{Change of variables and the rescaled energy}\label{subsec:change}
We consider the eigenvalue problem \eqref{EVPepsilondelta_nonrescaled} in $ \Omega_\delta$ and denote by $E_{\eps, \delta}$ the corresponding energy functional 
defined in $H^1_0 (\Omega_\delta)$ through
\begin{align*}
E_{\eps, \delta} [u]:=\int_{\Omega_\delta} A_{\eps,\delta} \nabla u\cdot \nabla u\, \dd{z}.
\end{align*}

For our analysis of the asymptotic behavior of~\eqref{EVPepsilondelta_nonrescaled} as $\delta$ and $\eps$ go to zero, 
we first perform a change of variables, transforming the problem defined in $\Omega_\delta$ into a problem on a fixed domain. 
Then, using $\Gamma$-convergence, we investigate the limit behavior of the 
resulting energy functionals. 

Consider the following change of variables
\begin{align}\label{rescaling} 
\psi_\delta: Q_\length:=(0, \length)\times\omega &\to\Omega_\delta \nonumber\\
 (s, (x_1, x_2))&\mapsto z=r(s) + \delta x_1 X_\theta(s)+\delta x_2 Y_\theta(s),
\end{align}
which transforms the straight cylinder of length $\length$ with cross section $\omega$ into the thin curved and 
twisted domain $\Omega_\delta$. Accounting for the definitions of the previous section we obtain, in the Tang frame $(T, X, Y)$, that
\begin{align*}
 \nabla\psi_\delta (s,x)=\left(\begin{array}{ccc} \beta_\delta & 0 & 0\\\delta \tau(R z_\theta \cdot x)& \delta \cos\theta & -\delta\sin\theta\\ 
\delta \tau(z_\theta\cdot x)& \delta \sin\theta & \delta \cos \theta \end{array}\right) \qquad \text{with $\det \nabla \psi_\delta=\delta^2\beta_\delta$,}
\end{align*}
where $z_t:=(\cos t, -\sin t)$ for $t\in\R$, $R$ is the rotation in 
$\R^2$ by the angle $-\pi/2$, $\tau=\theta'$, and
\begin{align}\label{beta} 
\beta_\delta(s,x) = 1 - \delta \big(\xi(s)\cdot x\big)\qquad \text{with $\xi(s)=k(s) z_{[(\theta-\alpha)(s)]}$}.\end{align}

Throughout this work
we assume $\xi:[0,\length]\to \R^2$ to be Lipschitz continuous. 
For the inverse one finds
 \begin{align*}
 (\nabla\psi_\delta)^{-1}(s,x)=\left(\begin{array}{ccc} \beta_\delta^{-1} & 0 & 0\\ \beta_\delta^{-1}\tau x_2 & \delta^{-1}\cos\theta  
& \delta^{-1} \sin\theta\\ 
-{\beta_\delta^{-1}}{\tau x_1}& - \delta^{-1}\sin\theta & \delta^{-1}\cos\theta \end{array}\right).
\end{align*}
After dividing by $\delta^2$ and having in mind that $ A_{\eps,\delta}(z)= a (x/\eps)\Ibb=a_\eps (x)\Ibb$, the energy functional $E_{\eps, \delta}[u]$
for $u\in H^1_0(\Omega_\delta)$ becomes
 \begin{align}\label{changedenergy}
\tilde{E}_{\eps, \delta} [v] = 
\int_{Q_\length}\left( \frac{a_\eps}{\beta_\delta} \abs{v' + \tau (\nabla^x v \cdot Rx)}^2 + 
\frac{a_\eps\,  \beta_\delta }{\delta^{2}} \abs{\nabla^x v}^2\right) \dd{s}\dd{x},
\end{align}
where
$$v(s,(x_1, x_2))=u(\psi_\delta(s,(x_1, x_2))\in H^1_0(Q_\length),$$  
and the gradient of $v$ is written in the form $(v', \nabla^x v)$. 

After the change of variables \eqref{rescaling}, problem~\eqref{EVPepsilondelta_nonrescaled} reads
\begin{align}\label{rescaledEVP}
\begin{cases}
\Acal_{\eps, \delta}\, v_{\eps, \delta}= \lambda_{\eps, \delta} \beta_\delta v_{\eps, \delta}& \ {\rm in\ } Q_\length,\\
\hspace{0.7cm}{v_{\eps, \delta}=0\ }& \ {\rm on\ } \partial Q_\length,\\ 
 \end{cases}
\end{align}
where \begin{subequations}
\begin{align}\label{EVPepsilondelta}
\Acal_{\eps, \delta}\, = -\diverg\Bigl(\frac{a_\eps}{\beta_\delta}(d\otimes d) \nabla \cdot\Bigr)- \frac{1}{\delta^2} 
\diverg^x\bigl(\beta_\delta\, a_\eps \nabla^x \cdot\bigr)
\end{align}
 with $d(s,x)= (1, \tau Rx)= (1, \tau x_2, -\tau x_1)\in \R^3$.
We remark that in the case without torsion, i.e.~$\tau=0$, the operator $\Acal_{\eps,\delta}$ reduces to
\begin{align}\label{EVPepsilondelta_notorsion}
 \Acal_{\eps, \delta}= -\diverg\Bigl(\frac{a_\eps}{\beta_\delta} (\cdot)'\Bigr)- \frac{1}{\delta^2} 
\diverg^x\bigl(\beta_\delta\, a_\eps \nabla^x \cdot\bigr).
\end{align}
\end{subequations}

As mentioned in the Introduction, we address two different cases of non-homogeneous cross section: first, in Section~\ref{sec:case_oscillatingcoefficients}, 
the case where the properties of the 
cross section oscillate at the same rate as the thickness of the domain, which means that we may set $\delta = \eps$. Here, we solve 
both an homogenization and a dimension reduction problem with the same parameter. Second, in Section~\ref{sec:case_inhomogeneous}, 
the case where the properties of the cross section vary, 
but do not oscillate, which corresponds to having $\eps$ identically equal to one, while $\delta$ decreases to zero. The limit behavior for the 
two cases turns out to be completely different.

\subsection{The cross section problem and the asymptotic method}\label{subsec:crosssection}
The expression~\eqref{changedenergy} of the rescaled energy shows the importance of understanding the order of 
convergence of the first eigenvalue of the following spectral problem in the cross section:
\begin{align*}
\begin{cases}
-\diverg \bigl( a_\eps \beta_\delta \nabla w \bigr) = \mu \beta_\delta w & \ {\rm in\ } \omega,\\
\hspace{2.1cm}{w=0\ }& \ {\rm on\ } \partial \omega.\\
\end{cases}
\end{align*}

A powerful tool in that direction is the method of asymptotic expansion.
This method consists in assuming that the solution of the parametrized problem has a certain development in powers of the involved parameters. 
Plugging this ansatz into the equation and gathering the terms of the same power leads to a series of problems, the solutions of which determine the 
coefficients of the development. This formal procedure needs to be justified by appropriate estimates. 
The following lemma is of great importance to obtain such estimates. For the proof we refer for instance to~\cite[Lemma 1.1]{JikovKozlovOleinik94}.
\begin{lemma}\label{lem:VishikLusternik}
Let $H$ be a Hilbert space equipped with the norm $\norm{\cdot}_H$ and let $L:H\to H$ be a 
continuous, linear, compact and self-adjoint operator. 
Suppose $\lambda>0$ and $u\in H$ are such that $\norm{Lu-\lambda u}_H\leq c \norm{u}_H$ for a constant $c>0$. \\
Then there is an eigenvalue
$\lambda^{(j)}$, $j\in\N_0$, of $L$ with $\abs{\lambda-\lambda^{(j)}}<c$. Besides, for any $d>c$ there exists $\bar{u}\in H$ with 
$\norm{\bar{u}}_H=\norm{u}_H$ 
that is the linear combination of eigenvectors associated with eigenvalues of $L$ lying in $[\lambda-d, \lambda+d]$ and that 
satisfies $\norm{u-\bar{u}}_H\leq 2\,(c/d)\norm{u}_H$.
\end{lemma}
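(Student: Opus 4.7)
The plan is to diagonalize $L$ via the spectral theorem for compact self-adjoint operators and then express both quantities $\|Lu - \lambda u\|_H$ and $\|u - \bar u\|_H$ through the coefficients of $u$ in a basis of eigenvectors. Concretely, I would fix an orthonormal basis $\{e_k\}_{k\in\N_0}$ of $H$ with $Le_k = \lambda^{(k)} e_k$ (including $\lambda^{(k)}=0$ if $\ker L \neq \{0\}$), write $u = \sum_k c_k e_k$ with $c_k = \langle u, e_k\rangle_H$, and note the Parseval-type identities
\begin{equation*}
\norm{u}_H^2 = \sum_k \abs{c_k}^2, \qquad \norm{Lu - \lambda u}_H^2 = \sum_k (\lambda^{(k)} - \lambda)^2 \abs{c_k}^2.
\end{equation*}

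For the first assertion, I would argue by contradiction: if $\abs{\lambda^{(k)} - \lambda} \geq c$ held for every $k \in \N_0$, then each factor $(\lambda^{(k)} - \lambda)^2$ in the sum above would be at least $c^2$, yielding $\norm{Lu - \lambda u}_H^2 \geq c^2 \norm{u}_H^2$, contradicting the hypothesis (the strict inequality in the conclusion then follows from the fact that, since $L$ is compact, the set of eigenvalues has $0$ as its only possible accumulation point, so the infimum $\inf_k \abs{\lambda - \lambda^{(k)}}$ is attained on any neighborhood of $\lambda$ avoiding $0$).

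For the second assertion, given $d > c$, I would split the index set into $I_d = \{k : \abs{\lambda^{(k)} - \lambda} \leq d\}$ and its complement, and define first
\begin{equation*}
\hat u := \sum_{k \in I_d} c_k e_k,
\end{equation*}
the orthogonal projection of $u$ onto the span of the eigenvectors associated with eigenvalues in $[\lambda - d, \lambda + d]$. Then
\begin{equation*}
\norm{u - \hat u}_H^2 = \sum_{k \notin I_d} \abs{c_k}^2 \leq \frac{1}{d^2}\sum_{k \notin I_d} (\lambda^{(k)} - \lambda)^2 \abs{c_k}^2 \leq \frac{1}{d^2}\norm{Lu - \lambda u}_H^2 \leq \frac{c^2}{d^2}\norm{u}_H^2.
\end{equation*}
To match the norm requirement $\norm{\bar u}_H = \norm{u}_H$, I would rescale $\bar u := (\norm{u}_H/\norm{\hat u}_H) \hat u$ (assuming $\hat u \neq 0$, which is guaranteed because $c < d$ ensures $\norm{\hat u}_H \geq \sqrt{1-c^2/d^2}\,\norm{u}_H > 0$); by the reverse triangle inequality $\abs{\norm{\hat u}_H - \norm{u}_H} \leq \norm{u - \hat u}_H \leq (c/d)\norm{u}_H$, so that
\begin{equation*}
\norm{u - \bar u}_H \leq \norm{u - \hat u}_H + \norm{\hat u - \bar u}_H \leq \frac{c}{d}\norm{u}_H + \abs{\norm{\hat u}_H - \norm{u}_H} \leq \frac{2c}{d}\norm{u}_H.
\end{equation*}

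I do not anticipate a serious obstacle here: the whole argument is bookkeeping in the eigenbasis, and the main technical point is simply invoking the spectral theorem for compact self-adjoint operators (available since $L$ is assumed compact and self-adjoint) and then being careful with the final normalization step so that $\bar u$ satisfies both the norm constraint and the spectral-localization constraint simultaneously.
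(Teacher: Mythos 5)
Your proposal follows exactly the route of the source the paper itself relies on for this lemma (the paper gives no in-text proof, only the citation to Jikov--Kozlov--Ole\u{\i}nik): diagonalize $L$ by the spectral theorem, read off $\norm{Lu-\lambda u}_H^2=\sum_k(\lambda^{(k)}-\lambda)^2\abs{c_k}^2$, take $\bar u$ to be the normalized spectral projection of $u$ onto the eigenvalues in $[\lambda-d,\lambda+d]$, and estimate the tail by $d^{-2}\norm{Lu-\lambda u}_H^2$. The whole second assertion, including the rescaling step and the bound $\norm{u-\bar u}_H\leq 2(c/d)\norm{u}_H$, is correct as written (with the harmless understanding that ``linear combination'' means an element of the corresponding spectral subspace, since the interval may contain infinitely many eigenvalues when $d\geq\lambda$, and that $u\neq 0$ is tacitly assumed).

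The one genuine defect is the treatment of the strict inequality $\abs{\lambda-\lambda^{(j)}}<c$. Assuming $\abs{\lambda^{(k)}-\lambda}\geq c$ for all $k$ gives $\norm{Lu-\lambda u}_H\geq c\norm{u}_H$, which does \emph{not} contradict the hypothesis $\norm{Lu-\lambda u}_H\leq c\norm{u}_H$; your computation therefore only yields an eigenvalue with $\abs{\lambda-\lambda^{(j)}}\leq c$. The parenthetical repair via the accumulation-point property of compact operators is a non sequitur: the infimum $\inf_k\abs{\lambda-\lambda^{(k)}}$ may well be attained with value exactly $c$. Indeed, on $H=\R$ with $Lu=(\lambda-c)u$, $\lambda>c>0$ and $u\neq 0$, the hypothesis holds with equality while the unique eigenvalue lies at distance exactly $c$ from $\lambda$, so the strict conclusion cannot be derived from the non-strict hypothesis at all; the classical statement (and every application of the lemma in this paper, where $c=C\eps^3$) asserts and needs only $\abs{\lambda-\lambda^{(j)}}\leq c$, which is precisely what your argument proves. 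So either weaken the first conclusion to ``$\leq c$'' or strengthen the hypothesis to a strict inequality; apart from this point the proof is sound and coincides with the standard one.
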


\subsection{The $\Gamma$-convergence approach}\label{subsec:Gamma} 
In parametrized eigenvalue problems it is often hard to derive convergence for eigenvalues other than the first. 
When dealing with the spectrum of self-adjoint operators with compact inverse, $\Gamma$-convergence 
of energy functionals turns out to be a powerful tool, as we can see from Lemma \ref{theo:Gammaconvergence}, stated below. 
In this paragraph we begin by recalling the definition of $\Gamma$-convergence and the one-to-one correspondence between 
non-negative lower semicontinuous quadratic forms and positive self-adjoint operators. 

Let $H$ be a Hilbert space and $F_\eps, F: H\to [0, + \infty]$. We say that the sequence $\{F_\eps\}_\eps$ $\Gamma$-converges to $F$ in $H$ as $\eps\to 0$ if the
following two conditions hold: 
\begin{itemize}
 \item[$(i)$] \ {\it (Lower bound)}\ For any $v$ and $\{v_\eps\}_\eps$  such that
$v_\eps\to v$  in $H$,
$\displaystyle \liminf_{\eps\to 0} F_\eps [v_\eps] \ge F[v];$
 \item[$(ii)$] {\it (Upper bound)}\ For every $v$, there exists a sequence
$\{\tilde v_\eps\}_\eps$ such that
$\tilde v_\eps\to v$ in $H$ and
$\displaystyle \limsup_{\eps\to 0} F_\eps[\tilde v_\eps] \le F[v].$
\end{itemize}
Up to a subsequence, such a  $\Gamma$-limit $F$ always exists and is lower semicontinuous. For further features of 
$\Gamma$-convergence theory, we refer to \cite{DalMaso_Gammaconvergence}.

Let $F: H\to [0, + \infty]$ be a non-negative lower semicontinuous quadratic form with domain $\Dcal(F)$ and $B$ its associated
bilinear form, i.e.\ the unique symmetric bilinear form $B: \Dcal(F)\times \Dcal(F)\to \R$ such that $F[u]=B(u,u)$ for all $u\in \Dcal(F)$. 
To each $F$ we associate one and only one positive self-adjoint 
operator $\Bcal: \overline{\Dcal(F)}\to H$ such that $F[u]=(\Bcal u,u)$ for all $u\in \Dcal(\Bcal)$, where 
\begin{align*}\Dcal(\Bcal)=\{u\in \Dcal(F): \exists f\in \overline{\Dcal(F)} : B(u,v)=(f,v)\ \forall v\in \Dcal(F)\}
\end{align*} and $\Bcal u=f$
(see \cite[Theorem~12.13]{DalMaso_Gammaconvergence}).
Moreover, the $\Gamma$-limit of a sequence of non-negative quadratic form is again a non-negative quadratic form and, in addition, 
lower semicontinuous (see \cite[Theorem~11.10]{DalMaso_Gammaconvergence}).

For the proof of the following lemma we refer to \cite[Theorem~3.1]{BMT07}. 
\begin{lemma}\label{theo:Gammaconvergence}
Let $H$ be a Hilbert space. For each $\eps>0$ let $H_\eps$ represent $H$ endowed with an inner product $(\cdot,\cdot)_{\eps}$ satisfying
$\ c_\eps\norm{v}_H^2 \leq (v,v)_\eps\leq d_\eps \norm{v}_{H}^2$ for all $v\in H$ and two real sequences such that 
$\displaystyle \lim_{\eps\to 0}c_\eps= \lim_{\eps\to 0} d_\eps=1$.\\
Suppose that $\Bcal_\eps: H_\eps\to H_\eps$ is a densely defined
self-adjoint operator for every $\eps>0$ and that $F_\eps:H_\eps\cong H\to \overline{\R}:=\R\cup\{+\infty\}$ is the corresponding (extended-valued) 
lower semicontinuous quadratic form such that $F_\eps[v]=(\Bcal_\eps v,v)$ for all $v\in \Dcal(\Bcal_\eps)$.

Further, assume that the following three conditions hold:
\begin{itemize}
 \item[$(i)$] (Lower bound) There is a constant $c>0$ (independent of $\eps$) such that $F_\eps[v]\geq -c\norm{v}_H^2$ 
for all $v\in H$.
 \item[$(ii)$] (Compactness) If $\{v_\eps\}_\eps$ is a bounded sequence in $H$ with \begin{align*} \sup_{\eps>0} F_\eps[v_\eps]\leq C<\infty,\end{align*} 
then there is a subsequence of $\{v_\eps\}_\eps$ converging in $H$.
 \item[$(iii)$] ($\,\Gamma$-convergence) It holds that $\Gamma(H)$-$\displaystyle\lim_{\eps \to 0} F_\eps=F_0$.
\end{itemize}
Then the limit functional $F_0:H\to \overline{\R}$ determines a unique closed
(not necessarily densely defined) operator $\Bcal_0: H\to H$ with compact resolvent such that $F_0[v]=\bigl(\Bcal_0 v,v\bigr)_H$ 
for all $v\in \Dcal(\Bcal_0)$.

Besides, there is convergence of the spectral problems associated with $\Bcal_\eps$ to the one associated with $\Bcal_0$. 
Precisely, this means:
Let $\{(\lambda_\eps^{(j)}, v_\eps^{(j)})\}_j$ and $\{(\lambda^{(j)}, v^{(j)})\}_j$ be the sequences of  
eigenpairs for
\begin{align*}
 \Bcal_\eps v_\eps= \lambda_\eps v_\eps,\quad v_\eps\in H_\eps, \qquad \text{and}\qquad \Bcal_0 v= \lambda v, \quad v\in H,
\end{align*}
respectively.
Then, for every $j\in \N_0$ it holds that
\begin{align*}
\lim_{\eps\to 0} \lambda_\eps^{(j)}= \lambda^{(j)}.
\end{align*}
After extracting a subsequence, $\{v_\eps^{(j)}\}_\eps$ converges strongly to eigenvectors 
corresponding to $\lambda^{(j)}$ as $\eps\to 0$. Conversely, any eigenvector 
$v^{(j)}$ of $\Bcal_0$ can be approximated in $H$ by a sequence of eigenvectors of $\Bcal_\eps$ to the eigenvalues $\lambda_\eps^{(j)}$. 
\end{lemma}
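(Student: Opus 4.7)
The plan is to proceed in three stages: first identify the limit operator $\Bcal_0$ together with its spectral properties, then prove convergence of the eigenvalues via the min-max principle, and finally extract convergence of eigenvectors.

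For the first stage, the general theory of $\Gamma$-convergence for quadratic forms (the results from Dal Maso's book already cited above) guarantees that $F_0$ is a lower semicontinuous quadratic form on $H$, bounded below by $-c\norm{\cdot}_H^2$ thanks to assumption $(i)$. After shifting by this constant, the one-to-one correspondence between non-negative closed quadratic forms and positive self-adjoint operators yields a unique operator $\Bcal_0$ with $F_0[v]=(\Bcal_0 v, v)_H$ on $\Dcal(\Bcal_0)$. The compactness hypothesis $(ii)$ implies that sublevel sets of $F_0$ are precompact in $H$, which translates into compactness of the resolvent of $\Bcal_0$; hence $\Bcal_0$ has a discrete spectrum of finite-multiplicity eigenvalues.

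For the second stage I would invoke the min-max characterization
\begin{align*}
\lambda_\eps^{(j)}=\min_{\substack{V\subset\Dcal(F_\eps)\\ \dim V=j+1}}\ \max_{v\in V\setminus\{0\}} \frac{F_\eps[v]}{(v,v)_\eps},
\end{align*}
and the analogous formula for $\lambda^{(j)}$. The $\limsup$ inequality $\limsup_{\eps\to 0}\lambda_\eps^{(j)}\leq \lambda^{(j)}$ follows by testing with the span of recovery sequences produced by $(iii)$ for an $H$-orthonormal basis of the first $(j+1)$ eigenvectors of $\Bcal_0$: the $\Gamma$-limsup bound controls the numerators, while $c_\eps,d_\eps\to 1$ controls the denominators. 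The reverse inequality uses $(ii)$ and $(iii)$: taking almost-minimizing $(j+1)$-dimensional subspaces $V_\eps$ in the variational definition of $\lambda_\eps^{(j)}$, the bound $F_\eps\leq \lambda_\eps^{(j)}+o(1)$ on the unit ball of $V_\eps$ together with compactness extracts convergent sequences of basis vectors, and the $\Gamma$-liminf inequality yields a $(j+1)$-dimensional subspace realizing at most $\liminf_\eps \lambda_\eps^{(j)}$ in the min-max for $\Bcal_0$.

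Finally, for convergence of eigenvectors, the uniform bound $F_\eps[v_\eps^{(j)}]=\lambda_\eps^{(j)}\to \lambda^{(j)}$ together with $(ii)$ yields a subsequence $v_\eps^{(j)}\to v$ in $H$; the $\Gamma$-liminf then gives $F_0[v]\leq \lambda^{(j)}\norm{v}_H^2$ with $\norm{v}_H=1$, and by the min-max analysis above $v$ must be an eigenvector of $\Bcal_0$ associated with $\lambda^{(j)}$. The converse statement is handled by applying $(iii)$ to an eigenvector of $\Bcal_0$ and invoking the upper and lower bounds together to ensure the approximating sequence can be chosen in the corresponding spectral subspaces. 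The main obstacle I foresee is the bookkeeping imposed by the $\eps$-dependent inner products $(\cdot,\cdot)_\eps$: orthonormal frames of eigenvectors of $\Bcal_\eps$ need not remain orthonormal in $H$, so one must orthonormalize recovery sequences (Gram-Schmidt in $H_\eps$) while preserving the $\Gamma$-limsup estimate. The hypothesis $c_\eps,d_\eps\to 1$ is precisely what makes the resulting error terms vanish, turning the argument into a standard variational continuity result.
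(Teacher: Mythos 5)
The paper does not prove this lemma itself but defers to \cite[Theorem~3.1]{BMT07}, and your argument follows essentially the same route as that reference: identify $\Bcal_0$ via the correspondence between lower semicontinuous quadratic forms and self-adjoint operators (shifted by the uniform lower bound, with compact resolvent coming from $(ii)$ through a diagonal/recovery-sequence argument), then obtain eigenvalue convergence from the min-max characterization using recovery sequences for the upper bound and compactness plus the $\Gamma$-liminf for the lower bound, and finally pass to limits in the eigenvectors. Your sketch is correct; the one step to make explicit is that when identifying the limit $v$ of $v_\eps^{(j)}$ as an eigenvector for $\lambda^{(j)}$, the bound $F_0[v]\leq \lambda^{(j)}\norm{v}_H^2$ alone does not suffice for $j\geq 1$ — you must also use the $H$-orthogonality of $v$ to the limits of $v_\eps^{(0)},\dots,v_\eps^{(j-1)}$ (inherited from $H_\eps$-orthogonality since $c_\eps,d_\eps\to 1$), which is precisely the bookkeeping you flag at the end.
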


\section{Waveguides with microscopically periodic fibers} \label{sec:case_oscillatingcoefficients}
In this section we treat the case where the heterogeneities in the cross section oscillate at the same rate 
as the thickness of the domain shrinks, i.e.,\ we set $\delta = \eps$, so that the energy functional~\eqref{changedenergy} reads for $v\in H_0^1(Q_l)$,
\begin{align}\label{epschangedenergy}
\tilde{E}_{\eps, \eps}[v] = 
\int_{Q_l}\left( \frac{a_\eps}{\beta_\eps} \abs{v' + \tau (\nabla^x v \cdot Rx)}^2 + 
\frac{a_\eps\, \beta_\eps} {\eps^{2}} \abs{\nabla^x v}^2\right) \dd{s}\dd{x}.
\end{align}
Recall that $Q_l=(0, l)\times\omega$ and $a_\eps=a(\cdot/\eps)$ with $a$ being measurable in $\R^2$ and $Y$-periodic with $Y=(0, 1)^2$. Additionally, we assume that
$a$ satisfies 
\begin{align}\label{uniformbounds}
0<c\leq a(y) \leq d
\end{align}
for almost every $y\in Y$ with constants $c,d>0$.

\subsection{Asymptotic analysis of the cross section eigenvalue problem}\label{subsec:asymptotics} 
In order to find the asymptotic behavior of the rescaled energy, expression \eqref{epschangedenergy} 
suggests to investigate the precise rate of convergence of the eigenvalues for the cross section problem
\begin{align}\label{epsperturbed_crossproblem}
-\diverg \bigl(a_\eps \beta_\eps(s) \nabla w \bigr) = \mu \beta_\eps(s) w, \qquad w \in H_0^1(\omega),
\end{align} 
with $\beta_\eps(s)=\beta_\eps(s, \cdot),\ s\in [0, l], $ given by \eqref{beta}. Towards this end, we perform the formal asymptotic expansion described below.



\subsubsection{Formal expansion in the cross section}\label{subsubsec:formal_expansion}
Let $s\in [0, \length]$ be fixed, and consider for $\mu_\eps^\ast$ and $w_\eps^\ast$ an ansatz of the form
\begin{align*}
&\mu_\eps^\ast (s) = \mu_0(s) + \eps\mu_1(s) + \eps^2 \mu_2(s) , \\
&w_\eps^\ast(s,x) = w_0(s, x, y)+\eps w_1(s,x,y)+\eps^2 w_2(s, x,y) +\eps^3 w_3(s,x,y) + \eps^4 w^4(s,x,y),\nonumber
\end{align*}
where $y=x/\eps$ represents the fast scale with respect to $x\in\omega$. 
After plugging $\mu_\eps^\ast$ and $w_\eps^\ast$ into~\eqref{epsperturbed_crossproblem} 
and gathering power-like terms of order $\eps^{-2}$, $\eps^{-1}$, $\eps^0$, $\eps^1$ and $\eps^2$, we obtain the following equations, 
which will determine the expressions $\mu_i$ and $w_i$ in the developments of $\mu_\eps^\ast$ and $w_\eps^\ast$, respectively,
\begin{align}\label{eq:diffequations_order}
&(\eps^{-2})\ \  -\diverg^y(a\,\nabla^y w_0)=0; \nonumber\\
&(\eps^{-1})\ \  -\diverg^y(a\,\nabla^y w_1) - \diverg^y(a\nabla^x w_0)=0;\nonumber\\
&(\eps^{0})\ \ \ \ -\diverg^y(a\,\nabla^y w_2) - \diverg^y(a\nabla^x w_1) - \diverg^x\bigl(a(\nabla^y w_1 +\nabla^x w_0)\bigr) - \mu_0 w_0 =0;\\
&(\eps^{1})\ \ \ \ -\diverg^y(a\,\nabla^y w_3) - \diverg^y(a\nabla^x w_2) - \diverg^x\bigl(a(\nabla^y w_2 +\nabla^x w_1)\bigr) \nonumber\\&
\qquad\qquad\qquad\qquad\qquad\qquad\qquad\qquad\qquad+  a(\nabla^x w_0 + \nabla^y w_1)\cdot \xi- \mu_0 w_1 - \mu_1 w_0=0;\nonumber\\
&(\eps^{2})\ \ \ \ -\diverg^y(a\,\nabla^y w_4) - \diverg^y(a\nabla^x w_3) - \diverg^x\bigl(a(\nabla^y w_3+\nabla^x w_2)\bigr) \nonumber\\ 
&\qquad\quad + a(\nabla^x w_1 + \nabla^y w_2)\cdot \xi + a(\xi\cdot x)(\nabla^x w_0 + \nabla^y w_1)\cdot \xi  - \mu_0 w_2 - \mu_1 w_1 - \mu_2 w_0=0.\nonumber
\end{align}

Before solving the sequence of problems in \eqref{eq:diffequations_order}, some details are needed.
In fact, from classical homogenization theory (see \cite{Vanninathan81, KesaI79, KesaII79}) we expect that, for fixed $s$, the eigenvalues  
$\mu_\eps(s)$ of \eqref{epsperturbed_crossproblem} converge for $\eps\to 0$ to the 
eigenvalues of the homogenized problem
\begin{align}\label{homogenized_crossproblem}
-\diverg \bigl(Q  \nabla^x w \bigr) = \mu w, \qquad w \in H_0^1(\omega).
\end{align}
Here, the symmetric matrix $Q=(Q_{ij}),\, i,j\in \{1,2\}$ is given by the usual homogenization formulas. Precisely,
\begin{align}\label{definitionQ}
Q_{ij}= \bar{a} \,\delta_{ij} + \int_{Y}a\,\partial_j^y \phi_i\dd{y},
\end{align}
where $\bar{a}:=\int_Y a(y)\dd{y}$ and $\phi\in H_\#^1(Y;\R^2)$ with $\int_Y \phi\dd{y}=0$ solves the cell problem
\begin{align}\label{equation_psi}
 -\diverg^y(a \,\nabla^y \phi_i)= \partial_i^y a.
\end{align}
We denote the first eigenpair of \eqref{homogenized_crossproblem}  by $(\mu_H, w_H)$.
Krein-Rutman's theorem (see for instance~\cite[Chapter~VIII,~\S 4,~Appendix]{DautrayLions90}) yields that $\mu_H$ is real, positive and simple and that 
the corresponding first eigenfunction $w_H$ can be chosen to be strictly positive
with $\int_\omega w_H^2 \,{dx}=1$. Regularity results guarantee that $w_H$ lies in $H_0^1(\omega)\cap C^\infty(\bar{\omega})$.

We define the third-order tensor $P=(P_{ijk})$ with $i,j,k\in\{1,2\}$ by
\begin{align*}
 P_{ijk} = \int_Y a \delta_{ij}\phi_k+ a\partial_k^y \zeta_{ij}\dd{y},
\end{align*}
where $\zeta=(\zeta_{ij})\in H^1_\#(Y;\R^{2\times 2})$ is such that $\int_Y \zeta\, dy=0$ and solves
\begin{align*}
 -\diverg^y(a\nabla^y \zeta_{ij})=a \delta_{ij} + \partial_j^y(a\phi_i) + a \partial_j^y \phi_i-Q_{ij}.
\end{align*}
Further, we set $S=(S_{ijk})$ for $i,j,k\in \{1,2\}$ to be
\begin{align*}
 S_{ijk}=\int_Y a\partial_j^y \varkappa_{ik}(y) - a\partial_k^y
\zeta_{ij}(y) - a \phi_i\delta_{jk}\dd{y},
\end{align*}
with $\varkappa=(\varkappa_{ij})\in H_\#^1(Y;\R^{2\times 2})$ such that $\int_Y \varkappa\dd{y}=0$ and 
\begin{align*}
  -\diverg^y(a \,\nabla^y \varkappa_{ij}) =-a\delta_{ij}-a\partial_j^y \phi_i + Q_{ij},
\end{align*}
and $R=(R_{ijkl})$ for $i,j,k,l\in \{1,2\}$ as the fourth-order tensor
\begin{align*}
  R_{ijkl}= \int_{Y} a\delta_{ij} \zeta_{kl}(y) + a \partial_l^y \Lambda_{ijk}(y)\dd{y}.
 \end{align*}
Here, $\Lambda=(\Lambda_{ijk})\in H_\#^1(Y;\R^{8})$ satisfies $\int_Y \Lambda\dd{y}=0$
and is the solution of
\begin{align*}
 -\diverg^y(a \,\nabla^y \Lambda_{ijk})
=  a\delta_{ij}\phi_k + \partial _k^y(a\zeta_{ij}) + a\partial_k^y \zeta_{ij} - Q_{ij}\phi_k-P_{ijk}.
\end{align*}

Then, solving successively the differential equations in~\eqref{eq:diffequations_order}, having regards to the Fredholm compatibility conditions, we obtain that
\begin{align}\label{defw123}
 w_0(s,x,y) &= w_H(x), \nonumber\\
 w_1(s,x,y) &= \phi_i(y)\ \partial_i^x w_H(x) + \bar{w}_1(s,x), \\
 w_2(s,x,y) &= \zeta_{ij}(y)\ \partial_{ij}^x w_H(x) + \phi_i(y)\ \partial_i^x \bar{w}_1(s,x) + \bar{w}_2(s,x), \nonumber \\
 w_3(s,x,y) &=\! \Lambda_{ijk}(y) \partial_{ijk}^x w_H(x) + \zeta_{ij}(y) \partial_{ij}^x \bar{w}_1(s,x)
             + \phi_i(y)\partial_i^x \bar{w}_2(s,x)+ \varkappa_{ij}(y)\xi_j(s)\partial_i^x w_H(x) \nonumber
\end{align}
for fixed $s\in [0, l]$ and $x\in \omega$, and $w_4$ solves
\begin{align}\label{defw4}
-\diverg^y(a\,\nabla^y w_4) =& \diverg^y(a\nabla^x w_3) + \diverg^x (a\nabla^y w_3)+ \diverg^x(a\nabla^x w_2)  \\
                                              &- (a\nabla^x w_1) \cdot \xi - (a\nabla^y w_2)\cdot \xi - a(\nabla^x w_0)\cdot \xi\, (\xi\cdot x) - (a\nabla^y w_1)\cdot \xi\, (\xi\cdot x)  \nonumber\\
                                              &+ \mu_0 w_2 + \mu_1 w_1 + \mu_2 w_0. \nonumber     
\end{align}
Since it does not play a role for our limit parameters, we leave the details about
the explicit form of $w_4$ to Appendix~\ref{appendix_A1}.
In the above expressions $\bar{w}_1(s,x)= \bar{w}(x)+\xi(s)\cdot\hat{w}(x)$, where $\bar{w}$ is the solution of
\begin{align}\label{def:wbar1}
 -Q_{ij}\partial_{ij}^x \bar{w} -\mu_H \bar{w} = P_{ijk} \partial_{ijk}^x w_H, \qquad \bar{w}\in H_0^1(\omega), \quad i,j,k\in\{1,2\},
\end{align}
with $\int_\omega \bar{w} w_H\dd{x}=0$, while $\hat{w}$ solves
\begin{align}\label{def:what}
  -Q_{ij}\partial_{ij}^x \hat{w}_k -\mu_H \hat{w}_k = -Q_{ik} \partial_{i}^x w_H, \qquad \hat{w}\in H_0^1(\omega; \R^2), \quad i,j,k\in\{1,2\},
\end{align}
and satisfies $\int_\omega \hat{w} w_H\dd{x}=0$;
moreover, for $s$ fixed, $\bar{w}_2(s)=\bar{w}_2(s,\cdot)\in H_0^1(\omega)$ with 
$\int_\omega \bar{w}_2(s)w_H\dd{x}=0$
is defined as the solution of
\begin{align}\label{defbarw2}
 &-Q_{ij}\partial_{ij}^x\bar{w}_2(s) - \mu_H\bar{w}_2(s)= R_{ijkl}\partial_{ijkl}^x w_H + P_{ijk}\partial_{ijk}^x \bar{w}_1(s) + S_{ijk}\xi_k(s)\partial_{ij}^x w_H\\
&\qquad\qquad\qquad\qquad\qquad\qquad\qquad - Q_{ij}\xi_j(s) \partial_i^x \bar{w}_1(s)  
- Q_{ij}\xi_j(s) (\xi(s)\cdot x)\partial_i^x w_H + \mu_2(s) w_H.\nonumber
\end{align}

Concerning the coefficients of $\mu_\eps^\ast$ one has
\begin{align*}
 \mu_0(s)=\mu_H,\qquad
 \mu_1(s)=0,\qquad
 \mu_2(s)=q_H + q_{\xi}(s),
\end{align*}
where
\begin{align}\label{def:q_H}
 q_H = - R_{ijkl}\left( \int_\omega (\partial_{ijkl}^x w_H) w_H\dd{x}\right) + P_{ijk} \left(\int_{\omega} (\partial_{ijk}^x w_H)\bar{w}\dd{x}\right),
\end{align}
independent of $s$, represents the homogenization contribution at the second order, and
\begin{align}\label{def:q_xi}
q_\xi(s) &= P_{ijk} \left( \int_\omega (\partial_{ijk}^xw_H) \bigl(\hat{w}\cdot \xi(s)\bigr)\dd{x}\right) 
+ Q_{ij} \xi_j(s)\left( \int_\omega (\xi(s)\cdot x)(\partial_{i}^x w_H) w_H\dd{x}\right) \nonumber\\ 
&\qquad\qquad - Q_{ij}\xi_j(s) \left( \int_\omega (\partial_{i}^x w_H) \bar{w}\dd{x}\right)
 -Q_{ij}\xi_j(s) \left( \int_\omega (\partial_{i}^x w_H) \bigl(\hat{w}\cdot \xi(s)\bigr)\dd{x}\right) \\ 
&\qquad\qquad - S_{ijk}\xi_k(s)
\left( \int_\omega (\partial_{ij}^x w_H) w_H\dd{x}\right)\nonumber\\
&= -\frac{1}{4} Q_{ij} \xi_i(s)\xi_j(s)-2 Q_{ij}\xi_j(s) \left( \int_\omega (\partial_{i}^x w_H) \bar{w}\dd{x}\right) - S_{ijk}\xi_k(s)
\left(\int_{\omega} (\partial_{ij}^x w_H) w_H\dd{x}\right)\nonumber
\end{align}
reflects essentially the effects introduced  by the curvature. The last equality in \eqref{def:q_xi} is a consequence of the following lemma.
\begin{lemma}
Let $s\in [0,l]$ be fixed. It holds that $\displaystyle\int_\omega (\xi(s) \cdot x)(\partial_i^x w_H) w_H\dd{x}=-\frac{\xi_i(s)}{2}$ for $i\in \{1,2\}$. 
Moreover, for $i,j,k\in \{1,2\}$,
\begin{align*}
 Q_{ij}\xi_j(s) \int_{\omega}(\partial_i^x w_H)(\hat{w}\cdot \xi(s))\dd{x}= -\frac{1}{4}Q_{ij}\xi_i(s)\xi_j(s),
\end{align*}
and 
\begin{align}\label{PQrelation}
 P_{ijk} \left( \int_\omega (\partial_{ijk}^x w_H) \bigl(\hat{w}\cdot \xi(s)\bigr)\dd{x}\right)
=- Q_{ij}\xi_j(s) \left( \int_\omega (\partial_{i}^x w_H) \bar{w}\dd{x}\right).
\end{align}
\end{lemma}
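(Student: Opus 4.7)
The plan is to establish the three identities independently, each reducing to integration by parts combined with the PDEs satisfied by $w_H$, $\bar w$, and $\hat w$. Throughout, $s$ is fixed, so $\xi=\xi(s)\in\R^2$ is a constant vector.

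For the first identity I would write $(\partial_i^x w_H)\,w_H = \tfrac12\partial_i^x(w_H^2)$ and integrate by parts against the linear coefficient $\xi\cdot x$. Since $w_H=0$ on $\partial\omega$, the boundary term vanishes, $\partial_i^x(\xi\cdot x)=\xi_i$, and $\int_\omega w_H^2\,dx=1$ deliver $-\xi_i/2$.

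For the second identity the key idea is to introduce the auxiliary function $\phi(x):=(\xi\cdot x)\,w_H(x)$, which lies in $H^1_0(\omega)$ because $w_H$ does. Expanding $\partial_{ij}^x\phi = \xi_i\partial_j^x w_H + \xi_j\partial_i^x w_H + (\xi\cdot x)\partial_{ij}^x w_H$ and using the symmetry of $Q$ together with the homogenized eigenvalue equation $-Q_{ij}\partial_{ij}^x w_H=\mu_H w_H$, a direct computation gives
\[
-Q_{ij}\partial_{ij}^x\phi - \mu_H\,\phi = -2\,Q_{ij}\xi_j\,\partial_i^x w_H,
\]
which is exactly twice the right-hand side of the equation \eqref{def:what} for $\hat w$ contracted with $\xi_k$. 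Hence $\hat w\cdot\xi - \phi/2$ lies in the kernel of $-\diverg(Q\nabla)-\mu_H$ on $H^1_0(\omega)$. By simplicity of $\mu_H$ (Krein--Rutman), this kernel is one-dimensional and equal to $\mathrm{span}(w_H)$, so $\hat w\cdot\xi = \phi/2 + c\,w_H$ for some $c\in\R$. Testing against $\partial_i^x w_H$ and noting that the $c\,w_H$ term contributes $c\int_\omega (\partial_i^x w_H)w_H\,dx = 0$, the first identity applied to the remaining piece gives
\[
\int_\omega (\partial_i^x w_H)\,(\hat w\cdot\xi)\,dx \;=\; \tfrac12\int_\omega (\xi\cdot x)(\partial_i^x w_H)w_H\,dx \;=\; -\tfrac{\xi_i}{4},
\]
and multiplying by $Q_{ij}\xi_j$ yields the claim.

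The third identity is a self-adjointness computation. Both $\bar w$ and $\hat w\cdot\xi$ belong to $H^1_0(\omega)$, and $Q$ is symmetric. Multiplying the defining equation \eqref{def:wbar1} for $\bar w$ by $\hat w\cdot\xi$ and integrating produces $P_{ijk}\int_\omega (\partial_{ijk}^x w_H)(\hat w\cdot\xi)\,dx$ on the right. Two integrations by parts on the left (boundary terms vanish because both functions are zero on $\partial\omega$) transfer the operator $-Q_{ij}\partial_{ij}^x - \mu_H$ to $\hat w\cdot\xi$, and then substituting the equation \eqref{def:what} leaves precisely $-Q_{ij}\xi_j\int_\omega \bar w\,\partial_i^x w_H\,dx$, which is the claimed right-hand side.

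The only nontrivial step is the choice $\phi=(\xi\cdot x)\,w_H$ in identity two; I expect this to be the main obstacle, since the other two identities are routine consequences of symmetry and the PDEs once the test function is well chosen. The cancellation $(\xi\cdot x)Q_{ij}\partial_{ij}^x w_H = -\mu_H\,\phi$ produced by the eigenvalue equation is exactly what reduces the second-order operator applied to $\phi$ to a pure first-order term proportional to $Q\xi\cdot\nabla^x w_H$, which is what lets us identify $\phi/2$ with $\hat w\cdot\xi$ modulo $w_H$.
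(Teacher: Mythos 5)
Your proof is correct; the first and third identities are handled exactly as in the paper (integration by parts for the first, and for the third the duality/self-adjointness argument: test \eqref{def:wbar1} with $\hat w\cdot\xi$, move $-Q_{ij}\partial_{ij}-\mu_H$ across, and invoke \eqref{def:what}). For the second identity, however, you take a genuinely different route. The paper never identifies $\hat w$ explicitly: it establishes the pointwise Wronskian-type identity $Q_{ij}\partial_j^x\bigl(w_H\partial_i^x\hat w_k-(\partial_i^x w_H)\hat w_k\bigr)=\tfrac12 Q_{ik}\partial_i^x(w_H^2)$ (a consequence of \eqref{homogenized_crossproblem}, \eqref{def:what} and the symmetry of $Q$, in the spirit of Lemma~4.3 of~\cite{BMT07}) and then integrates by parts twice against $\xi\cdot x$; this needs no information about the multiplicity of $\mu_H$. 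You instead observe that $\phi=(\xi\cdot x)w_H\in H_0^1(\omega)$ satisfies $-Q_{ij}\partial_{ij}^x\phi-\mu_H\phi=-2Q_{ij}\xi_j\partial_i^x w_H$, so that $\hat w\cdot\xi-\phi/2$ lies in the kernel of $-Q_{ij}\partial_{ij}^x-\mu_H$ on $H_0^1(\omega)$, and conclude $\hat w\cdot\xi=\tfrac12(\xi\cdot x)w_H+c\,w_H$ from the simplicity of $\mu_H$ (available here via Krein--Rutman, as the paper notes). Your argument buys more: it gives the closed-form $\hat w_k=\tfrac12 x_k w_H+c_k w_H$ and hence the componentwise identity $\int_\omega(\partial_i^x w_H)(\hat w\cdot\xi)\dd{x}=-\xi_i/4$, of which the stated contraction with $Q_{ij}\xi_j$ is a special case; the price is the reliance on simplicity of the first homogenized eigenvalue, which the paper's purely integration-by-parts computation avoids. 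Both arguments are complete and rigorous in the present setting, where $w_H\in H_0^1(\omega)\cap C^\infty(\bar\omega)$ and $Q$ is constant and symmetric.
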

\begin{proof}
The first part of the statement follows from integration by parts. For the second equality we may argue similarly to~\cite[Lemma~4.3]{BMT07}. 
Recalling the definition of $\hat{w}$ and $w_H$ and using the symmetry $Q_{ij}=Q_{ji}$ for $i,j\in\{1,2\}$, we get
\begin{align*}
& Q_{ij}\partial_j^x\bigl(w_H(\partial_i^x \hat{w}_k) - (\partial_i^x w_H) \hat{w}_k\bigr) \\
&\qquad\qquad\qquad=  Q_{ij}\bigl(w_H(\partial_{ij}^x \hat{w}_k) - (\partial_{ij}^x w_H)\hat{w}_k\bigr)
 + Q_{ij} \bigl((\partial_j^x w_H)(\partial_i^x \hat{w}_k) - (\partial_i^x w_H)(\partial_j^x \hat{w}_k)\bigr) \\
&\qquad\qquad\qquad= Q_{ik} (\partial_i^x w_H)w_H = \frac{1}{2} Q_{ik}\partial_i^x (w_H^2), \qquad k\in \{1,2\}.
\end{align*}
Consequently, for every $k\in \{1,2\}$,
\begin{align*}
 Q_{ij}\xi_j(s) \int_{\omega}(\partial_i^x w_H)(\hat{w}\cdot \xi(s))\dd{x}
&=Q_{ij}\xi_j(s) \xi_k(s)\int_{\omega}(\partial_i^x w_H)\hat{w}_k\dd{x}\\
&= \frac{1}{2} \xi_k(s)\int_\omega \partial_j(\xi(s)\cdot x)Q_{ij} \bigl((\partial_{i}^x w_H)\hat{w}_k- w_H(\partial_{i}^x \hat{w}_k) \bigr)\dd{x}\\
&= \frac{1}{2} \xi_k(s)\int_\omega (\xi(s)\cdot x) Q_{ij}\partial_j^x \bigl(w_H(\partial_{i}^x \hat{w}_k) - (\partial_{i}^x w_H)\hat{w}_k\bigr)\dd{x}\\
&= -\frac{1}{4}\xi_k(s)\int_\omega \partial_i(\xi(s) \cdot x) Q_{ik}w_H^2\dd{x}=-\frac{1}{4}Q_{ik}\xi_i(s)\xi_k(s)\\
&= -\frac{1}{4}Q_{ij}\xi_i(s)\xi_j(s).
\end{align*}
To obtain~\eqref{PQrelation} we exploit~\eqref{def:what} and~\eqref{def:wbar1}, which entails
\begin{align*}
P_{ijk} \xi_l(s) \int_\omega (\partial_{ij}^x w_H)(\partial_k^x \hat{w}_l )\dd{x} 
&= -\xi_l(s)\int_\omega \bigl(Q_{ij} \partial_{ij}^x \hat{w}_l + \mu_H \hat{w}_l\bigr)\bar{w}_1\dd{x}\\
&= -Q_{il} \xi_l(s) \int_{\omega}(\partial_i^x  w_H)\bar{w}_1\dd{x}.
\end{align*}
\end{proof}

Our approximating sequences $\mu^\ast_\eps$ and $w_\eps^\ast$ are now well-defined as follows:
\begin{align}\label{asymptotic_expansion} 
\mu^\ast_\eps(s) & = \mu_H + \eps^2 (q_H + q_\xi(s)),\\
w_\eps^\ast(s,x) & = w_H(x) + \eps w_1(s,x, y) + \eps^2 w_2(s,x,y) + \eps^3 w_3(s,x,y) + \eps^4 w_4(s,x,y), \qquad y=x/\eps.\nonumber
\end{align}

\subsubsection{Justification of the formal expansion}
The next result yields the exact rate of convergence for the first eigenvalue of the cross section problem \eqref{epsperturbed_crossproblem} as well as the limit behavior of the corresponding eigenmode. 
In the following, for $\eps>0$ and $s\in[0,\length]$, let $(\mu_\eps(s), w_\eps(s))$ be the first normalized eigenpair of \eqref{epsperturbed_crossproblem}. Here, 
normalized means that 
\begin{align}\label{normalization}
 \displaystyle \int_\omega \beta_\eps\abs{w_\eps}^2\dd{x}=1,
\end{align}
and we suppose $w_\eps(s)>0$ in $\omega$. 

To make the formal computations of the previous section rigorous, some more regularity of the involved functions is required. 
Precisely, we need the following two hypotheses to be satisfied:

\begin{itemize}
 \item[\textit{(H1)}] $\phi\in W_{\#}^{1, \infty}(Y;\R^2)$ (see \eqref{equation_psi});
 \item[\textit{(H2)}] $\xi\in W^{1, \infty}((0,l);\R^2)$ (see \eqref{beta}).
\end{itemize}

\begin{lemma}\label{lem:asymptotics_mueps}
If (H1) is satisfied and $\xi\in L^{\infty}((0, l);\R^2)$, there exists a constant $C>0$ such that for all $s\in[0,\length]$ and $\eps>0$ sufficiently small, 
\begin{align*}
 \abs{\mu_\eps(s)-\mu_\eps^\ast(s)}\leq C\eps^3,
\end{align*}
and 
\begin{align*}
 \norm{w_\eps(s)-w_\eps^\ast(s)}_{H^1(\omega)}\leq C\eps^{1/2}.
\end{align*}
\end{lemma}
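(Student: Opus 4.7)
The plan is to apply the abstract perturbation result Lemma~\ref{lem:VishikLusternik} to the resolvent operator associated with the cross section problem. For fixed $s\in[0,l]$, I would equip $H:=L^2(\omega)$ with the weighted inner product $(u,v)_\eps:=\int_\omega \beta_\eps(s)\,uv\,\dd x$, uniformly equivalent to the standard one for $\eps$ small thanks to (H2), and consider the compact self-adjoint solution operator $L_\eps:H\to H$ defined by $L_\eps f = w$, where $w\in H_0^1(\omega)$ solves $-\diverg(a_\eps\beta_\eps\nabla w)=\beta_\eps f$. Its eigenvalues are the reciprocals $\{1/\mu_\eps^{(j)}(s)\}$ of those of~\eqref{epsperturbed_crossproblem}, with identical eigenvectors. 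The strategy is to test the lemma on a suitable modification of $w_\eps^*$, with $\lambda=1/\mu_\eps^*(s)$.

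A first obstacle is that $w_\eps^*\notin H_0^1(\omega)$, since the $Y$-periodic correctors $\phi_i(\cdot/\eps)$, $\zeta_{ij}(\cdot/\eps)$, $\Lambda_{ijk}(\cdot/\eps)$, $\varkappa_{ij}(\cdot/\eps)$ appearing in $w_1,\ldots,w_4$ need not vanish on $\partial\omega$. I would introduce a smooth cutoff $\chi_\eps$ equal to $1$ outside an $\eps$-neighborhood of $\partial\omega$, vanishing on $\partial\omega$, with $|\nabla\chi_\eps|\le C/\eps$, and multiply each corrector by $\chi_\eps$ to produce $\tilde w_\eps^*\in H_0^1(\omega)$. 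The classical boundary-strip estimate, together with the $L^\infty$-bound on $\phi_i$ furnished by (H1), yields $\|w_\eps^*-\tilde w_\eps^*\|_{H^1(\omega)}\le C\eps^{1/2}$. Next, I would expand the residual $r_\eps:=\mathcal A_\eps\tilde w_\eps^* - \mu_\eps^*\beta_\eps\tilde w_\eps^*$ in powers of $\eps$ at the two scales $x$ and $y=x/\eps$. By the very construction of $w_0,\ldots,w_4$ and $\mu_0,\mu_1,\mu_2$ in~\eqref{eq:diffequations_order}, all contributions of orders $\eps^{-2}$ through $\eps^2$ cancel; the interior remainder is $O(\eps^3)$ in $H^{-1}(\omega)$, and the additional terms introduced by $\chi_\eps$, supported in a strip of width $\eps$, are bounded in the dual norm at the same rate by a Hardy-type inequality. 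Therefore $\|r_\eps\|_{H^{-1}(\omega)}\le C\eps^3$, and coercivity of the bilinear form $B_\eps(u,v)=\int_\omega a_\eps\beta_\eps\nabla u\cdot\nabla v\,\dd x$ translates this into $\|L_\eps\tilde w_\eps^* - (\mu_\eps^*)^{-1}\tilde w_\eps^*\|_\eps\le C\eps^3$.

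Applying Lemma~\ref{lem:VishikLusternik} to $L_\eps$ with this $u$, $\lambda=1/\mu_\eps^*(s)$, and $d$ fixed strictly smaller than half the spectral gap $\mu_H^{(1)}-\mu_H$, the simplicity of $\mu_H$ and the $s$-independence of the homogenized operator guarantee that for $\eps$ small the only eigenvalue of $L_\eps$ in $[\lambda-d,\lambda+d]$ is $1/\mu_\eps(s)$. This yields $|\mu_\eps(s)-\mu_\eps^*(s)|\le C\eps^3$ together with a combination $\alpha_\eps w_\eps$ satisfying $\|\tilde w_\eps^* - \alpha_\eps w_\eps\|_{L^2}\le C\eps^3$ and $\alpha_\eps\to 1$. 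To upgrade the latter to an $H^1$ estimate, I would test the identity
\begin{equation*}
\mathcal A_\eps z \;=\; \mu_\eps\beta_\eps z + (\mu_\eps^*-\mu_\eps)\beta_\eps \tilde w_\eps^* + r_\eps, \qquad z:=\tilde w_\eps^* - \alpha_\eps w_\eps,
\end{equation*}
against $z$ itself and invoke coercivity together with the $O(\eps^3)$ bounds on $\|z\|_{L^2}$, on $\mu_\eps^*-\mu_\eps$, and on $\|r_\eps\|_{H^{-1}}$, obtaining $\|z\|_{H^1}\le C\eps^3$. The triangle inequality together with the boundary-layer bound then produces the asserted $O(\eps^{1/2})$ $H^1$ estimate, with the boundary contribution being dominant. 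The delicate step is the last one in paragraph two: controlling the $H^{-1}$-norm of the cutoff-induced boundary contribution at the optimal rate $\eps^3$, which is essential for preserving the $\eps^3$ eigenvalue rate; everything else is careful but essentially routine bookkeeping.
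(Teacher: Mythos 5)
Your overall strategy (Vishik--Lyusternik via Lemma~\ref{lem:VishikLusternik} applied to a resolvent-type operator, with the $\eps^{1/2}$ arising from a boundary layer) is the right one, but the step you yourself flag as delicate is a genuine gap, and it does not close. After multiplying the correctors by the cutoff $\chi_\eps$, the residual $r_\eps=\Acal_\eps\tilde w_\eps^*-\mu_\eps^*\beta_\eps\tilde w_\eps^*$ contains the divergence-form term $-\diverg\bigl(a_\eps\beta_\eps\nabla(\tilde w_\eps^*-w_\eps^*)\bigr)$. The leading cutoff error is $\eps(1-\chi_\eps)\phi_i(x/\eps)\,\partial_i^x w_H$, and its gradient (through $\eps\nabla\chi_\eps\,\phi_i\,\partial_i^x w_H$ and $(1-\chi_\eps)\nabla^y\phi_i\,\partial_i^x w_H$) is of size $O(1)$ pointwise on a strip of width $\sim\eps$ where, by the Hopf lemma, $|\nabla w_H|$ does not degenerate; hence $\|\nabla(\tilde w_\eps^*-w_\eps^*)\|_{L^2(\omega)}\sim\eps^{1/2}$, and pairing this term with $\nabla v$ for $v\in H_0^1(\omega)$ gives only $\|r_\eps\|_{H^{-1}(\omega)}\le C\eps^{1/2}$, generically sharp. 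A Hardy-type inequality cannot improve this: it gains a factor $\dist(x,\partial\omega)\lesssim\eps$ only for terms paired with $v$ itself, not for terms in divergence form paired with $\nabla v$, and you cannot integrate by parts to move derivatives off $a_\eps\in L^\infty$. With only an $O(\eps^{1/2})$ residual, Lemma~\ref{lem:VishikLusternik} yields $|\mu_\eps-\mu_\eps^*|\le C\eps^{1/2}$, which destroys the $\eps^3$ eigenvalue rate that the lemma asserts and that Proposition~\ref{prop:asymptotics_mu} (convergence of $(\mu_\eps-\mu_H)/\eps^2$) requires.

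The paper avoids this by never cutting off $w_\eps^*$ at the eigenvalue stage. The interior residual bound \eqref{estimate_crosssection}, $\|\diverg(a_\eps\beta_\eps\nabla w_\eps^*)+\mu_\eps^*\beta_\eps w_\eps^*\|_{H^{-1}(\omega)}\le C\eps^3$, does not require $w_\eps^*\in H_0^1(\omega)$; the abstract lemma is then applied not to $w_\eps^*$ but to $f_\eps^*=-\diverg(a_\eps\beta_\eps\nabla w_\eps^*)+\beta_\eps w_\eps^*$, viewed in the space $H^{-1}(\omega)$ equipped with the inner product $(f,g)^\sim_\eps=(L_\eps f,L_\eps g)_\eps$, on which $\tilde L_\eps=i\circ L_\eps$ is compact and self-adjoint. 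This preserves $|\mu_\eps-\mu_\eps^*|\le C\eps^3$ and $\|f_\eps-f_\eps^*\|_{H^{-1}(\omega)}\le C\eps^3$, and the boundary mismatch enters only afterwards: $w_\eps-w_\eps^*$ solves a boundary value problem with datum $-w_\eps^*$ on $\partial\omega$, and the elliptic estimate \eqref{est:weps_wasteps} combined with the trace bound $\|w_\eps^*\|_{H^{1/2}(\partial\omega)}\le C\eps^{1/2}$ of \eqref{est:boundary} (proved in Appendix~\ref{appendix_B} by exactly the cutoff computation you propose) produces the $O(\eps^{1/2})$ eigenfunction estimate. In short: your cutoff-plus-$H^1$-strip estimate is the right tool for the eigenfunction bound, but it must be kept out of the residual used for the eigenvalue bound; as written, your argument proves the second estimate of the lemma but not the first.
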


\begin{proof}
Let $s\in [0,\length]$ be fixed. For the sake of simplicity we will drop the explicit dependence on $s$ in the subsequent computations. 
The basic idea for deriving the stated estimates is to apply Lemma~\ref{lem:VishikLusternik}. Before doing so in Step~3, we provide the necessary
preliminary work in Steps~1 and~2.

{\textit{Step~1: An upper bound estimate.}}
Since the terms in the development~\eqref{asymptotic_expansion} are defined so that, by plugging $\mu^\ast_\eps$ and $w^\ast_\eps$ 
into \eqref{epsperturbed_crossproblem}, all coefficients of order less than three cancel, one finds 
\begin{align}\label{eq:remainder}
 \diverg \bigl(a_\eps \beta_\eps  \nabla w^\ast_{\eps}\bigr) + \mu^\ast_\eps \beta_\eps w^\ast_\eps &=
\eps^3 \,\bigl[\diverg^x\bigl(a\,(\nabla^x w_3 + \nabla^y w_4 - (\xi\cdot x)(\nabla^x w_2 + \nabla^y w_3))
\bigr)\nonumber\\
&\qquad\qquad+ \diverg^y\bigl(a\,(\nabla^x w_4 - (\xi\cdot x)(\nabla^x w_3 + \nabla^y w_4)) \bigr) \nonumber\\
&\qquad\qquad+ \mu_0 w_3 + \mu_1 w_2
+ \mu_2 w_1-(\xi\cdot x) (\mu_0 w_2 +\mu_1 w_1 +\mu_2 w_0)\bigr] \nonumber \\ 
&+\  \eps^4 \,\bigl[\diverg^x\bigl(a\,(\nabla^x w_4 - (\xi\cdot x)(\nabla^x w_3 + \nabla^y w_4))\bigr) \\
&\qquad\qquad
- \diverg^y(a \,(\xi\cdot x) \nabla^x w_4) +\mu_0 w_4+\mu_1 w_3 +\mu_2 w_2\nonumber\\
&\qquad\qquad-(\xi\cdot x) (\mu_0 w_3 +\mu_1 w_2+ \mu_2 w_1)\bigr]\nonumber\\
&+\  \eps^5\, \bigl[-\diverg^x(a\, (\xi\cdot x)\nabla^x w_4 )+ \mu_1 w_4 +\mu_2 w_3\nonumber\\
&\qquad\qquad-(\xi\cdot x) (\mu_0 w_4 +\mu_1 w_3+ \mu_2 w_2)\bigr]\nonumber\\
&+\ \eps^6 \bigl[\mu_2 w_4 -(\xi\cdot x) (\mu_1 w_4 + \mu_2 w_3)\bigr] + \eps^7\bigl[-(\xi\cdot x) \mu_2 w_4\bigr].\nonumber 
\end{align}
Note that we omitted the arguments $x$ and $y$, $(y=x/\eps)$, to shorten the notation.

Owing to the regularity of the coefficients in the auxiliary problems~\eqref{homogenized_crossproblem}, 
\eqref{def:wbar1}, \eqref{def:what} and~\eqref{defbarw2}, which contribute to the definition of the terms $w_n$, $n = 1,\ldots, 4$ (see~\eqref{defw123} 
and \eqref{defw4}), and the fact that replacing $y$ by $x/\eps$ in a $Y$-periodic $L^2$-function originates a uniformly bounded function of $L^2(\omega)$, 
one can conclude that all the coefficients in the expansion~\eqref{eq:remainder}, except for those involving $\diverg^y$, are bounded in $L^2(\omega)$, 
independently of $\eps$. The terms involving $\diverg^y$ will give, after replacing $y$ by $x/\eps$, a sequence uniformly bounded in $H^{-1}(\omega)$.

In view of the continuous embedding $L^2(\omega)\subset H^{-1}(\omega)$, we may therefore infer
\begin{align}\label{estimate_crosssection}
 \normb{\diverg (a_\eps \beta_\eps \nabla w^\ast_{\eps}) + \mu^\ast_\eps \beta_\eps w^\ast_\eps}_{H^{-1}(\omega)} \leq C\eps^{3}.
\end{align}

{\textit{Step~2: Construction of a suitable compact and self-adjoint operator.}}
For fixed $\eps >0$, consider the linear and bounded operator
\begin{align*}
 L_\eps: H^{-1}(\omega)\to H_0^1(\omega),\qquad f\mapsto L_\eps f= w^{(\eps)}_f, 
\end{align*}
where the function $w_f^{(\eps)}$ is, by the Lax-Milgram theorem, the unique solution of  
\begin{align*}
-\diverg (a_\eps\beta_\eps \nabla w) + \beta_\eps w = f, &\qquad \text{ $w \in H_0^1(\omega)$.}
\end{align*}
The operator $L_\eps$ is an isomorphism from $H^{-1}(\omega)$ into $H_0^1(\omega)$.
Let $i$ denote the compact embedding
\begin{align*}
i: H^{1}_0(\omega)\to H^{-1}(\omega),\qquad \langle i(u), v\rangle_{H^{-1}(\omega)\times H^1_0(\omega)}=\int_\omega u v\dd{x}, 
\end{align*}
and define $\tilde L_\eps:= i\circ L_\eps$.
Notice that with this definition $\tilde L_\eps:H^{-1}(\omega)\to H^{-1}(\omega)$ is compact, but not self-adjoint. 
We introduce in $H^{-1}(\omega)$ the inner product $(\cdot,\cdot)_{\eps}^\sim$ given by
\begin{align*}
(f,g)_{\eps}^\sim =  (L_\eps f, L_\eps g)_{\eps}=\bigl( w^{(\eps)}_f, w^{(\eps)}_g\bigr)_{\eps},\qquad 
f,g\in H^{-1}(\omega),
\end{align*} 
where $(\cdot, \cdot)_{\eps}$ is defined as follows,
\begin{align*}
 (u,v)_{\eps}= \int_{\omega}a_\eps\beta_\eps \nabla u\cdot\nabla v + \beta_\eps uv\dd{x} , \qquad u,v\in H_0^1(\omega).
\end{align*}
The norm induced by $(\cdot, \cdot)_{\eps}$ on $H_0^1(\omega)$ is equivalent to the standard norm in $H_0^1(\omega)$
(with constants independent of 
$\eps$ and $s$). 
By ${H}^{-1}_{\eps}(\omega)$ we denote the Hilbert space $H^{-1}(\omega)$ endowed with the inner product $(\cdot, \cdot)_{\eps}^\sim$.
Then the operator $\tilde{L}_{\eps}:{H}^{-1}_{\eps}(\omega)\to {H}^{-1}_{\eps}(\omega)$ 
is both compact and self-adjoint. Indeed, for all $f,g\in H^{-1}_\eps(\omega)$, one has
\begin{align*}
 (\tilde{L}_{\eps} f, g)_{\eps}^\sim &=\bigl( w_{\tilde{L}_{\eps} f}^{(\eps)}, w_g^{(\eps)}\bigr)_{\eps} 
=\bigl\langle -\diverg (a_\eps\beta_\eps \nabla w_{\tilde{L}_{\eps} f}^{(\eps)}) + \beta_\eps w_{\tilde{L}_{\eps} f}^{(\eps)}, 
w_g^{(\eps)}\bigr\rangle_{H^{-1}(\omega)\times H^1_0(\omega)}\\
&=\int_\omega {L}_{\eps} f\; w^{(\eps)}_g\dd{x}
=\int_\omega w_f^{(\eps)} w^{(\eps)}_g\dd{x}.
\end{align*}
The right-hand side of the equation above is symmetric in $f$ and $g$. Hence, we may conclude the symmetry of the operator $\tilde{L}_{\eps}$. 
Compactness of $ \tilde{L}_{\eps}$ follows from the compactness of $i$. 

{\textit{Step~3: Applying Lemma~\ref{lem:VishikLusternik}.}}
In view of Step~2, the largest eigenvalue $\nu_\eps$ of the operator $\tilde{L}_{\eps}$ is related to the 
first eigenvalue $\mu_\eps$ of~\eqref{epsperturbed_crossproblem} 
by $\nu_\eps=\bigl(\beta_\eps(\mu_\eps + 1)\bigr)^{-1}$ for $\eps$ is sufficiently small, while 
for the corresponding eigenfunctions $f_\eps$ and $w_\eps$ one finds $f_\eps=-\diverg(a_\eps\beta_\eps\nabla w_\eps)+\beta_\eps w_\eps$.
Analogous relations hold for all eigenpairs of~\eqref{epsperturbed_crossproblem} in increasing order and of~$\tilde{L}_\eps$ in decreasing order.

Let $f^\ast_\eps:=-\diverg(a_\eps\beta_\eps\nabla w^\ast_\eps)+\beta_\eps w^\ast_\eps$ and 
$\nu^\ast_\eps:=\bigl(\beta_\eps(\mu^\ast_\eps+1)\bigr)^{-1}$ for $\eps$ small enough, where $\mu_\eps^\ast$ and $w_\eps^\ast$ 
are defined in \eqref{asymptotic_expansion}. 
Then, by \eqref{estimate_crosssection} and since the norm $||\cdot||^\sim_\eps$ induced by the inner product of 
$H^{-1}_\eps(\omega)$ is equivalent to the standard norm in $H^{-1}(\omega)$,
\begin{align*}
 \norm{\tilde{L}_{\eps} f^\ast_\eps-\nu^\ast_\eps f^\ast_\eps}_{\eps}^\sim
&= \nu^\ast_\eps\norm{\diverg (a_\eps\beta_\eps \nabla w^\ast_{\eps}) + \mu^\ast_\eps\beta_\eps w^\ast_\eps}_{\eps}^\sim \\
&\leq C \norm{\diverg (a_\eps\beta_\eps \nabla w^\ast_{\eps}) + \mu^\ast_\eps\beta_\eps w^\ast_\eps}_{H^{-1}(\omega)}
\leq \,C\eps^{3}.
\end{align*}
Classical homogenization guarantees that $\displaystyle \lim_{\eps\to 0}\mu_\eps^{(j)}=\mu^{(j)}_H$ for all $j\in \N_0$ with $\mu_H^{(j)}$ the $j$th eigenvalue of
\eqref{homogenized_crossproblem}, and since $\mu_H$ is simple, there
exists a constant $\alpha>0$, independent of $\eps$, satisfying $\mu^{(1)}_\eps-\mu_\eps \ge \alpha>0$ for all $\eps$ small enough 
(see, for instance, \cite[Chapter~11]{JikovKozlovOleinik94} and~\cite[Theorem~2.1]{KesaI79}). 
In particular, this implies 
\begin{align*}
 \displaystyle\lim_{\eps\to 0} \nu_\eps=(\mu_H + 1)^{-1}=\lim_{\eps\to 0} \nu^\ast_\eps \qquad\text{and} \qquad\nu_\eps-\nu_\eps^{(1)} \geq \alpha>0
\end{align*} 
with a constant $\alpha>0$, not depending on $\eps$, and $\nu_\eps^{(1)}$ denoting the second largest eigenvalue of $\tilde{L}_\eps$.
 
As a consequence, for $\eps>0$ sufficiently small, Lemma~\ref{lem:VishikLusternik} yields the estimates
\begin{align}\label{est:lf}
 \abs{\nu_\eps^\ast-\nu_\eps}\leq C\eps^{3} (\norm{f^\ast_\eps}_{\eps}^\sim)^{-1} 
\qquad\text{and}\qquad 
\norm{f_\eps^\ast-f_\eps}_{H^{-1}(\omega)}\leq C\norm{f_\eps^\ast-f_\eps}^\sim_{\eps}\leq C\eps^{3}.
\end{align}
Notice that in deriving~\eqref{est:lf} we took into account 
that both $\norm{f^\ast_\eps}_{\eps}^\sim$ and $(\norm{f^\ast_\eps}_\eps^\sim)^{-1}$ are uniformly bounded regarding $\eps$.
Indeed,~\eqref{asymptotic_expansion} in conjunction with the properties of $w_H$ entails the existence of constants $c, C>0$ such that
\begin{align*}
0<c \leq\norm{w_\eps^\ast}_\eps =\norm{f^\ast_\eps}_{\eps}^\sim  \leq C <\infty
\end{align*}
for all $\eps$ small enough.
Arguing that also $\nu_\eps^{-1}$ and $(\nu^\ast_\eps)^{-1}$ are 
uniformly bounded with respect to $\eps$, we obtain from the first inequality of~\eqref{est:lf} that
\begin{align*}
 \abs{\mu_\eps-\mu^\ast_\eps}\leq C\eps^{3}.
\end{align*}
On the other hand $(w_\eps -w^\ast_\eps)\in H^1 (\omega)$ solves the problem
\begin{align*}
\begin{cases}
-\diverg (a_\eps\beta_\eps \nabla w) + \beta_\eps w = (f_\eps - f^\ast_\eps) &  \text{in $\omega$,}\\
\hspace{3.1cm}w = - w^\ast_\eps  & \text{on $\partial \omega$}.
\end{cases}
\end{align*}
Due to the uniform ellipticity of the coefficients with respect to $\eps$ (see \eqref{uniformbounds} and the definition of $\beta_\eps$), we obtain
%
%
 \begin{align}\label{est:weps_wasteps}
 \norm{w_\eps - w^\ast_\eps}_{H^{1} (\omega)}\leq C \left(\norm{f_\eps- f^\ast_\eps}_{H^{-1}(\omega)}
+ \norm{w^\ast_\eps}_{H^{1/2}(\partial \omega)}\right).
\end{align}
We claim that
\begin{align}\label{est:boundary}
 \norm{w^\ast_\eps}_{H^{1/2}(\partial \omega)} \le C \eps^{1/2}.
\end{align}
In fact, \eqref{est:boundary} together with \eqref{est:lf}~and \eqref{est:weps_wasteps} implies the second estimate of our statement, concluding its proof.

The proof of the claim above follows along the lines of~\cite[Chapter~7.2]{CioDon99}. 
For the readers' convenience, we detail the proof of \eqref{est:boundary} in Appendix~\ref{appendix_B}.
\end{proof}
As a consequence of the previous lemma, we obtain the following result.

\begin{proposition}\label{prop:asymptotics_mu}
Under the hypothesis (H1) and if $\xi\in L^{\infty}((0, l);\R^2)$, the following convergence holds uniformly in $[0, \length]$:
\begin{align*}
 \lim_{\eps\to 0} { \frac{\mu_\eps  -\mu_H}{\eps^{2}}} = q_H + q_\xi\,,
\end{align*} 
with $q_H$ and $q_\xi$ defined in \eqref{def:q_H} and \eqref{def:q_xi}, respectively. Moreover, $w_\eps\to w_H$ in $L^2(Q_\length)$ as $\eps\to 0$, 
where $w_\eps(s,x)=w_\eps(s)(x)$ for $(s,x)\in Q_\length$.
\end{proposition}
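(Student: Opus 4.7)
The proposition will follow almost directly from Lemma~\ref{lem:asymptotics_mueps}, so the plan is essentially to track uniformity in $s$ and then translate the $s$-pointwise bounds into the desired convergences.

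First, I would revisit Lemma~\ref{lem:asymptotics_mueps} to confirm that the constant $C$ appearing in the two estimates can be chosen independently of $s\in[0,l]$. Inspecting its proof, the constant is built out of norms of the correctors $\phi,\zeta,\varkappa,\Lambda$ (independent of $s$), the auxiliary solutions $\bar w,\hat w,\bar w_2(s)$ obtained from elliptic problems whose right-hand sides depend on $\xi(s)$ in a Lipschitz-bounded way, the uniform ellipticity constants $c,d$, and the quantity $\|\xi\|_{L^\infty}$. Under the hypothesis $\xi\in L^\infty((0,l);\R^2)$, all these inputs are bounded uniformly in $s$, hence
\[
\sup_{s\in[0,l]}\bigl|\mu_\eps(s)-\mu_\eps^\ast(s)\bigr|\le C\eps^3,\qquad
\sup_{s\in[0,l]}\bigl\|w_\eps(s)-w_\eps^\ast(s)\bigr\|_{H^1(\omega)}\le C\eps^{1/2}.
\]

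For the eigenvalue statement, I would simply substitute the explicit form $\mu_\eps^\ast(s)=\mu_H+\eps^2\bigl(q_H+q_\xi(s)\bigr)$ from~\eqref{asymptotic_expansion}. Dividing the first estimate above by $\eps^2$ gives
\[
\sup_{s\in[0,l]}\left|\frac{\mu_\eps(s)-\mu_H}{\eps^2}-\bigl(q_H+q_\xi(s)\bigr)\right|\le C\eps,
\]
which is exactly the claimed uniform convergence on $[0,l]$.

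For the convergence $w_\eps\to w_H$ in $L^2(Q_l)$, I would combine the $H^1$-estimate above with an $L^2$-bound on $w_\eps^\ast(s)-w_H$. From the definitions of $w_1,\dots,w_4$ in~\eqref{defw123} and~\eqref{defw4}, each $w_n(s,\cdot,\cdot/\eps)$ is bounded in $L^2(\omega)$ uniformly in $s$ (the $Y$-periodic $L^2$-factors being uniformly bounded after the rescaling $y=x/\eps$, and the remaining $x$-factors being smooth since $w_H\in C^\infty(\bar\omega)$), so $\|w_\eps^\ast(s)-w_H\|_{L^2(\omega)}\le C\eps$ uniformly in $s$. Together with the triangle inequality and the $H^1$-estimate, this yields $\sup_{s\in[0,l]}\|w_\eps(s)-w_H\|_{L^2(\omega)}\to 0$, and integrating over $(0,l)$ gives the desired convergence in $L^2(Q_l)$.

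The only mildly subtle step is the uniformity of the $H^1$-estimate in $s$: this relies on the fact that the elliptic equations defining $\bar w_2(s)$ and $\hat w$ are solved with right-hand sides that are Lipschitz in $\xi(s)$, together with the uniform ellipticity of $Q$, so that standard elliptic regularity yields $s$-independent control. Beyond checking this, the argument is a direct packaging of Lemma~\ref{lem:asymptotics_mueps}.
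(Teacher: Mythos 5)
Your proposal is correct and follows exactly the route the paper intends: the paper gives no separate proof for this proposition, stating it explicitly "as a consequence of the previous lemma," and your argument is just the straightforward unpacking of Lemma~\ref{lem:asymptotics_mueps} (dividing the $\eps^3$-estimate by $\eps^2$, using $\mu_\eps^\ast = \mu_H + \eps^2(q_H+q_\xi)$, applying the triangle inequality with the $L^2$-bound $\|w_\eps^\ast(s)-w_H\|_{L^2(\omega)}\le C\eps$, and integrating the uniform-in-$s$ estimate over $(0,l)$). Note only that your first step of "revisiting" the proof of Lemma~\ref{lem:asymptotics_mueps} for $s$-uniformity is redundant: the lemma statement already asserts a single constant $C$ valid for all $s\in[0,\length]$, so you may quote it directly.
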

The next lemma will provide additional regularity for the eigenfunctions $w_\eps(s)$ with respect to the variable $s$.

\begin{lemma}\label{lem:Lipschitz_continuity}
Assume that (H2) is satisfied and let $\eps>0$ be sufficiently small. Then the mappings $\mu_\eps: [0, \length]\to \R$, $s\to\mu_\eps(s)$ and 
$w_\eps:[0, \length]\to L^2(\omega)$, $s\mapsto w_\eps(s)$ are 
Lipschitz continuous with Lipschitz constant $C\eps$ and $C\sqrt{\eps}$, respectively.
\end{lemma}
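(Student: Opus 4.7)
The $s$-dependence of the eigenvalue problem \eqref{epsperturbed_crossproblem} enters only through the weight $\beta_\eps(s)(x)=1-\eps\xi(s)\cdot x$, and by assumption (H2) it satisfies
\begin{equation*}
\|\beta_\eps(s_1)-\beta_\eps(s_2)\|_{L^\infty(\omega)}\le \eps\,\|\xi'\|_{L^\infty}\,\diam(\omega)\,|s_1-s_2|.
\end{equation*}
For $\eps$ sufficiently small, $\beta_\eps(s)$ stays in a fixed interval around $1$, uniformly in $s$, so the weighted inner products $(f,g)_{\beta_\eps(s)}=\int_\omega \beta_\eps(s)fg\,dx$ are all equivalent to the standard $L^2(\omega)$-inner product with constants independent of $\eps$ and $s$. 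Moreover, the normalization \eqref{normalization}, the uniform ellipticity of $a_\eps\beta_\eps$, and the boundedness of $\mu_\eps(s)$ inherited from Proposition~\ref{prop:asymptotics_mu} yield $\|w_\eps(s)\|_{H^1_0(\omega)}\le C$, uniformly in $s\in[0,l]$ and in small $\eps$.

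The Lipschitz bound for $\mu_\eps$ would follow from the Rayleigh-quotient characterization
\begin{equation*}
\mu_\eps(s)=\min_{w\in H_0^1(\omega)\setminus\{0\}}\frac{\int_\omega a_\eps\beta_\eps(s)|\nabla w|^2\,dx}{\int_\omega \beta_\eps(s)w^2\,dx}.
\end{equation*}
Inserting $w_\eps(s_1)$ into the quotient at $s_2$ and splitting $\beta_\eps(s_2)=\beta_\eps(s_1)+(\beta_\eps(s_2)-\beta_\eps(s_1))$ in both numerator and denominator, I would use the equation at $s_1$ together with the uniform $H^1$-bound on $w_\eps(s_1)$ to obtain $\mu_\eps(s_2)-\mu_\eps(s_1)\le C\eps|s_1-s_2|$. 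Exchanging the roles of $s_1$ and $s_2$ yields the reverse inequality, hence Lipschitz continuity with constant $C\eps$.

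For $w_\eps$, I would apply Lemma~\ref{lem:VishikLusternik} in the framework of Step~2--3 of the proof of Lemma~\ref{lem:asymptotics_mueps}. For each $s$, let $L_\eps(s):L^2(\omega)\to L^2(\omega)$ be the compact operator $f\mapsto w$, where $-\diverg(a_\eps\beta_\eps(s)\nabla w)=\beta_\eps(s)f$ and $w\in H_0^1(\omega)$; this is self-adjoint in the inner product $(\cdot,\cdot)_{\beta_\eps(s)}$, with largest eigenvalue $1/\mu_\eps(s)$ and (positive, normalized) eigenvector $w_\eps(s)$. The residual $r:=L_\eps(s_2)w_\eps(s_1)-\mu_\eps(s_1)^{-1}w_\eps(s_1)\in H_0^1(\omega)$ satisfies
\begin{equation*}
-\diverg\bigl(a_\eps\beta_\eps(s_2)\nabla r\bigr)=(\beta_\eps(s_2)-\beta_\eps(s_1))w_\eps(s_1)+\mu_\eps(s_1)^{-1}\diverg\bigl(a_\eps(\beta_\eps(s_2)-\beta_\eps(s_1))\nabla w_\eps(s_1)\bigr).
\end{equation*}
Testing against $r$ itself, using the $L^\infty$-estimate on $\beta_\eps(s_2)-\beta_\eps(s_1)$ and the uniform $H^1$-bound on $w_\eps(s_1)$, then Poincaré's inequality, yields $\|r\|_{L^2(\omega)}\le C\eps|s_1-s_2|$. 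The uniform spectral gap $\mu_\eps^{(1)}(s)-\mu_\eps(s)\ge\alpha_0>0$, established in Step~3 of the proof of Lemma~\ref{lem:asymptotics_mueps} from the simplicity of $\mu_H$ and classical homogenization, transfers to a uniform gap for the eigenvalues of $L_\eps(s)$. Lemma~\ref{lem:VishikLusternik} then produces $\bar u=c\,w_\eps(s_2)$ with $\|w_\eps(s_1)-\bar u\|_{L^2(\omega)}\le C\sqrt{\eps}\,|s_1-s_2|$.

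The main obstacle is to pass from the Vishik--Lyusternik conclusion, which only controls a multiple $c\,w_\eps(s_2)$, to the actual difference $w_\eps(s_1)-w_\eps(s_2)$. Here I would use the identity $c^2=\|w_\eps(s_1)\|_{\beta_\eps(s_2)}^2=1+\int_\omega(\beta_\eps(s_2)-\beta_\eps(s_1))w_\eps(s_1)^2\,dx$, which gives $|c^2-1|\le C\eps|s_1-s_2|$, together with the sign convention $w_\eps(s)>0$ (guaranteeing $c>0$), to conclude $|c-1|\le C\eps|s_1-s_2|$. Combined with the previous estimate and the uniform $L^2$-bound on $w_\eps(s_2)$, the triangle inequality yields the claimed Lipschitz bound for $w_\eps$. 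Ensuring that the constants throughout are independent of both $\eps$ and $s$ is the last technical point, but it follows exactly as in the proof of Lemma~\ref{lem:asymptotics_mueps}.
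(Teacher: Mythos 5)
Your argument is correct in substance and, for the eigenvalue part, more elementary than the paper's: you get the $C\eps$ Lipschitz bound for $\mu_\eps$ directly by inserting $w_\eps(s_1)$ into the Rayleigh quotient at $s_2$ and exchanging roles, whereas the paper routes everything through the solution operator $L(s):f\mapsto w_f(s)$ and applies Lemma~\ref{lem:VishikLusternik} already at the eigenvalue level. For the eigenfunctions the two proofs use the same machinery (the residual estimate of order $\eps\abs{s_1-s_2}$, a spectral gap, and Lemma~\ref{lem:VishikLusternik} with a window of width $\sim\sqrt{\eps}$), but you organize it differently: by working in the $\beta_\eps(s_2)$-weighted inner product, in which $L_\eps(s_2)$ is self-adjoint and $w_\eps(s_2)$ is the normalized top eigenvector, you can take $u=w_\eps(s_1)$ itself and then remove the multiplicative constant $c$ by the explicit identity $c^2=1+\int_\omega(\beta_\eps(s_2)-\beta_\eps(s_1))w_\eps(s_1)^2\dd{x}$ and the positivity convention. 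The paper instead works with the plain-$L^2$-normalized eigenfunction $f(s)$ of $L(s)$, shows $\norm{f(s)-f(\bar s)}_{L^2}\leq C\sqrt{\eps}\abs{s-\bar s}$, and then transfers to $w_\eps$ through $w_\eps(s)=L(s)f(s)$ and the Lipschitz continuity of $s\mapsto L(s)$; your version avoids this last transfer at the price of the normalization bookkeeping. Both give the same constants.

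One point you should not gloss over: the spectral gap you invoke must be uniform in $s\in[0,\length]$, not only in $\eps$ at fixed $s$. Step~3 of the proof of Lemma~\ref{lem:asymptotics_mueps}, which you cite, gives $\mu_\eps^{(1)}(s)-\mu_\eps(s)\geq\alpha>0$ with $\alpha$ independent of $\eps$ but for a fixed $s$; in the present lemma the gap enters for the pair $(s_1,s_2)$ and the final Lipschitz constant must not depend on $s$. The paper spends a nontrivial part of its proof on exactly this: it shows that $s\mapsto\mu_\eps(s)$ is continuous and $s\mapsto\mu_\eps^{(1)}(s)$ is lower semicontinuous (via Rayleigh quotients and compactness of minimizing sequences), so that the gap function attains a positive infimum on the compact interval $[0,\length]$. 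Alternatively, one can compare~\eqref{epsperturbed_crossproblem} with the $s$-independent problem for $a_\eps$ alone, using $\norm{\beta_\eps(s)-1}_{L^\infty}\leq C\eps$, to get a gap uniform in both $s$ and $\eps$. Either way, a short additional argument is needed at this spot; with it supplied, your proof goes through.
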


\begin{proof}
Throughout this proof we fix a sufficiently small $\eps\in (0,1)$ and focus on the dependence on $s$. 
For the sake of simplicity we will omit some obvious dependences on $\eps$.

For $s\in [0,\length]$, consider the compact and self-adjoint operator 
\begin{align*}
 L(s): L^{2}(\omega)\to L^2(\omega),\qquad f\mapsto L(s) f= w_f(s), 
\end{align*}
where $w_f(s)$ is the unique solution of
\begin{align*}
-\diverg (a_\eps\beta_\eps(s) \nabla w) = \beta_\eps(s) f, &\qquad \text{ $w \in H_0^1(\omega)$,}
\end{align*}
to be understood as an element of $L^2(\omega)$ using the compact embedding $H_0^1(\omega)\subset\subset L^2(\omega)$.
The proof proceeds in three steps.

{\textit{Step 1: Lipschitz continuity of $L$.}}
We prove that there exists a constant $C>0$ such that for all $s, \bar{s}\in [0,\length]$ and $f\in L^2(\omega)$,
\begin{align}\label{est:K_Lipschitz}
 \norm{L(s)f-L(\bar{s})f}_{L^2 (\omega)}\leq C\eps \norm{f}_{L^{2}(\omega)}\,\abs{s-\bar{s}}.
\end{align}
Indeed, by the definition of $w_f(s)$ one has for $s,\bar{s}\in [0,\length]$ that
\begin{align*}
 -\diverg\bigl(a_\eps\beta_\eps(s) \nabla w_f(s)\bigr)&=\beta_\eps(s)f, \\
-\diverg\bigl(a_\eps\beta_\eps(\bar{s}) \nabla w_f(\bar{s})\bigr)&=\beta_\eps(\bar{s})f.
\end{align*}
Subtracting these two equations, testing the result with $\bigl(w_f(s)-w_f(\bar{s})\bigr)$ and integrating by parts gives 
\begin{align*}
& \int_\omega a_\eps \beta_\eps(s)\abs{\nabla w_f(s)- \nabla w_f(\bar{s})}^2 \dd{x}\nonumber\\
& = \int_{\omega} \bigl(\beta_\eps(s)-\beta_\eps(\bar{s})\bigr) \Bigl[a_\eps \nabla w_f(\bar{s})\cdot\nabla \bigl(w_f(\bar{s})-w_f(s)\bigr) + 
f\bigl(w_f(s)-w_f(\bar{s})\bigr)\Bigr]\dd{x}.\\
\end{align*}
Since (see \eqref{beta}) 
\begin{align*}
 \abs{\beta_\eps(s)-\beta_\eps(\bar{s})}\leq C\eps \abs{\xi(s)-\xi(\bar{s})}\leq C\eps \abs{s-\bar{s}}\qquad\text{for all $s, \bar{s}\in [0,\length],$}
\end{align*}
as a consequence of the Lipschitz continuity of $\xi$, and in view of
$\norm{\nabla w_f(\bar{s})}_{L^2(\omega)}\leq C\norm{f}_{L^{2}(\omega)}$ and the fact that $a_\eps$ is uniformly bounded, one obtains
\begin{align}\label{est:LipschitzL}
\int_\omega a_\eps \beta_\eps(s)\abs{\nabla w_f(s)- \nabla w_f(\bar{s})}^2 \dd{x}
\leq C\eps\abs{s-\bar{s}}\,\norm{w_f(s)-w_f(\bar{s})}_{H^1(\omega)}\norm{f}_{L^{2}(\omega)}.
\end{align}
On the other hand, in view of the uniform lower bound $a_\eps \beta_\eps(s)\geq c>0$ and Poincar\'{e}'s inequality, we find
\begin{align*}
\int_\omega a_\eps \beta_\eps(s)\abs{\nabla w_f(s)- \nabla w_f(\bar{s})}^2 \dd{x} 
\geq c\norm{w_f(s)- w_f(\bar{s})}^2_{H^1(\omega)},
\end{align*}
which, together with \eqref{est:LipschitzL}, entails \eqref{est:K_Lipschitz}.

{\textit{Step 2: Lipschitz continuity of the largest eigenpair of $L$.}} 
For $s\in [0,\length]$ let $\nu(s)$ denote the largest eigenvalue of $L(s)$. Notice that $\nu(s)=(\mu_\eps(s))^{-1}$. 
For the corresponding eigenfunction $f(s)$ we may assume that 
$\norm{f(s)}_{L^2(\omega)}=1$ and $f(s)>0$ in $\omega$.

Step~1 implies for $s, \bar{s}\in [0,\length]$ that
\begin{align}\label{est:Vishik}
\norm{L(\bar{s})f(s)-\nu(s)f(s)}_{L^2(\omega)} 
&=\norm{L(\bar{s})f(s)-L(s)f(s)}_{L^2(\omega)} 
\leq C\eps\abs{s - \bar{s}}.
\end{align}
Following Lemma~\ref{lem:VishikLusternik}, there exists an eigenvalue $\bar{\nu}(\bar{s})$ of $L(\bar{s})$ with
$\abs{\nu(s)-\bar{\nu}(\bar{s})}\leq C\eps \abs{s-\bar{s}}$.
If $\nu(\bar{s})\leq \nu(s)$, then $\bar{\nu}(\bar{s})\leq \nu(\bar{s})\leq \nu(s)$ and thus, 
$|{\nu}(s)-{\nu}(\bar{s})|\leq C\eps\abs{s-\bar{s}}$;
if $\nu(s)\leq \nu(\bar{s})$, the same result follows from exchanging the roles of $s$ and $\bar{s}$. Consequently, we have for all $s,\bar{s}\in [0,l]$ that
\begin{align}\label{est:nu}
 \abs{{\nu}(s)-{\nu}(\bar{s})}\leq C\eps \abs{s-\bar{s}}.
\end{align}
Let $\nu^{(1)}(s)$ represent the second eigenvalue in decreasing order of $L(s)$. In fact, $\nu^{(1)}(s)=\bigl(\mu^{(1)}_\eps(s)\bigr)^{-1}$. 
We claim that there exists $\alpha>0$ such that
\begin{align}\label{estimated}
d(s):=\nu(s)-\nu^{(1)}(s)\geq \alpha
\end{align}
for all $s\in [0,\length]$.

By \eqref{est:nu}, $\nu$ is continuous in $s$, and $-\nu^{(1)}$ is lower semicontinuous in $s$, provided $\mu^{(1)}_\eps$ is. Thus, if the latter is true,
$d$ attains its infimum in $[0,\length]$, which is strictly positive, since $\mu_\eps(s)$ is simple for every $s\in[0,\length]$. 

It remains to prove the lower semicontinuity of $s\mapsto \mu^{(1)}_\eps(s)$. For each $s\in[0,\length]$, $\mu_\eps(s)$ and $\mu^{(1)}_\eps(s)$ 
can be expressed through Rayleigh quotients as 
\begin{align*}
 \mu_\eps(s)=\frac{\int_\omega a_\eps\beta_\eps(s)\abs{\nabla w_\eps(s)}^2\dd{x}}{\int_\omega \beta_\eps(s)\abs{w_\eps(s)}^2\dd{x}}\qquad\text{and}\qquad
\mu^{(1)}_\eps(s)=\frac{\int_{\omega}a_\eps \beta_\eps(s)\abs{\nabla w^{(1)}_\eps (s)}^2\dd{x}}{\int_\omega \beta_\eps(s)\abs{w^{(1)}_\eps(s)}^2\dd{x}},
\end{align*}
respectively. Here, $w_\eps^{(1)}(s)\in H_0^1(\omega)$ is an eigenfunction for~\eqref{epsperturbed_crossproblem} regarding the second eigenvalue $\mu_\eps^{(1)}(s)$ such that
$\displaystyle \int_\omega \beta_\eps(s) |w^{(1)}_\eps(s)|^2\dd{x}=1$ and $\displaystyle \int_\omega \beta_\eps(s) w_\eps(s)w^{(1)}_\eps(s)\dd{x}=0$.
Let $\{s_n\}_n\subset[0,\length]$ and $s_0\in [0,\length]$ be such that $\displaystyle\lim_{n\to \infty} s_n=s_0$. 
Without loss of generality we may assume that $\displaystyle\liminf_{n\to \infty}\mu^{(1)}_\eps(s_n)<\infty$. Then $\{w_\eps(s_n)\}_n$ and $\{w^{(1)}_\eps(s_n)\}_n$ are 
uniformly bounded in $H^1_0(\omega)$. Hence, one can find two subsequences converging both weakly in $H^1(\omega)$ and strongly in $L^2(\omega)$ to limit functions
$w_0\in H_0^1(\omega)$ and $ w^{(1)}_0 \in H_0^1(\omega)$, respectively. In particular, $w_0$ and $w_0^{(1)}$ have unitary norms in $L^2(\omega)$ and
satisfy $\displaystyle\int_{\omega} w_0\ w_0^{(1)}\dd{x}=0$.
In view of the continuity of $\mu_\eps$ with respect to $s$, which results from \eqref{est:nu}, the above observations entail
\begin{align}\label{eq50}
 \mu_\eps(s_0)=\liminf_{n\to \infty}\mu_\eps(s_n)\geq \frac{\int_{\omega}a_\eps \beta_\eps(s_0)\abs{\nabla w_0}^2 \dd{x}}{\int_{\omega}\beta_\eps(s_0)\abs{w_0}^2\dd{x}} 
\geq \mu_\eps(s_0).
\end{align}
Notice that the last inequality follows from 
the min-max theorem.
Consequently, $w_0$ is the eigenfunction of~\eqref{epsperturbed_crossproblem} corresponding to $\mu_\eps(s_0)$, i.e.\ $w_0=w_\eps(s_0)$. 
Similarly to~\eqref{eq50}, we infer
\begin{align*}
 \liminf_{n\to \infty} \mu^{(1)}_\eps(s_n)\geq \frac{\int_\omega a_\eps \beta_\eps(s_0) \abs{\nabla w_0^{(1)}}^2\dd{x}}{\int_\omega \beta_\eps(s_0)\abs{w_0^{(1)}}^2\dd{x}}
\geq \mu^{(1)}_\eps(s_0),
\end{align*}
which finally is the stated lower semicontinuity of $\mu^{(1)}_\eps$ in $s$. Thus,~\eqref{estimated} is proven.

Let $s, \bar{s}\in [0, \length]$ with $\abs{s-\bar{s}}<\alpha/C$, where $C>0$ is the constant in \eqref{est:nu} and $\alpha>0$ is as in~\eqref{estimated}. Then, 
\begin{align}\label{est8}
 \abs{\nu(s)-\nu(\bar{s})}\leq \eps\alpha <\alpha.
\end{align}
Considering~\eqref{est:Vishik}, by Lemma~\ref{lem:VishikLusternik} there exists a linear combination $\bar{f}$ of eigenfunctions of $L(\bar{s})$ corresponding 
to the eigenvalues in 
$[\nu(s)-\sqrt{\eps}\alpha, \nu(s)+ \sqrt{\eps}\alpha]$ such that $\norm{\bar{f}}_{L^{2}(\omega)}=1$ and
\begin{align*}
 \norm{f(s)-\bar{f}}_{L^{2}(\omega)}\leq  (2C/\alpha)\sqrt{\eps}\abs{s-\bar{s}}.
\end{align*}
Since the interval above contains no eigenvalues of $L(\bar{s})$ other than $\nu(\bar{s})$ for $\eps$ sufficiently small (see~\eqref{estimated} and~\eqref{est8}), 
one finds
that $\bar{f}=f(\bar{s})$ (we may assume that $\bar{f}>0$). This proves 
\begin{align}\label{est:f}
 \norm{f(s)-f(\bar{s})}_{L^{2}(\omega)}\leq C\sqrt{\eps}\abs{s-\bar{s}}
\end{align}
for all $s, \bar{s}\in [0, \length]$ with a constant $C>0$, independent of $s$.

{\textit{Step 3: Lipschitz continuity of $\mu_\eps$ and $w_\eps$.}}
From~\eqref{est:nu} together with the continuity of $s\mapsto\mu_\eps(s)$ on the compact interval $[0,\length]$ and the uniform boundedness of 
$\mu_\eps$ regarding $\eps$
one infers, representing by $C$ a constant with respect to $s$,
 \begin{align*}
 \abs{{\mu_\eps}(s)-{\mu}_\eps(\bar{s})}\leq C\eps\abs{s-\bar{s}} \abs{\mu_\eps(s)}\,\abs{\mu_\eps(\bar{s})} \leq C\eps \abs{s-\bar{s}}
 \end{align*}
for all $s, \bar{s}\in [0,\length]$. This proves the stated Lipschitz continuity of $\mu_\eps$. 
Finally, \eqref{est:K_Lipschitz}~and~\eqref{est:f} lead to
\begin{align*}
\norm{w_\eps(s)-w_\eps(\bar{s})}_{L^2(\omega)}&=\norm{w_{f(s)}(s)-w_{f(\bar{s})}(\bar{s})}_{L^2(\omega)}=\norm{L(s)f(s)-L(\bar{s})f(\bar{s})}_{L^2(\omega)}\\
&\leq \norm{L(s)f(s)-L(\bar{s})f(s)}_{L^2(\omega)}+\norm{L(\bar{s})(f(s)-f(\bar{s}))}_{L^2(\omega)}\\
&\leq C\eps \abs{s-\bar{s}} + C\norm{f(s)-f(\bar{s})}_{L^{2}(\omega)}\leq C\sqrt{\eps} \abs{s-\bar{s}}
\end{align*}
for all $s, \bar{s}\in [0,\length]$.
This completes the proof.
\end{proof}

\subsubsection{Variational formulation}
The asymptotic expansion of the cross section problem~\eqref{epsperturbed_crossproblem} (see~\eqref{asymptotic_expansion} and Lemma~\ref{lem:asymptotics_mueps}) 
suggests that, in order to obtain the desired spectral convergence, one needs to subtract from $\tilde{E}_{\eps, \eps}$, defined in \eqref{epschangedenergy}, 
the quadratic term
\begin{align*}
 \int_{Q_\length}\beta_\eps\frac{\mu_H}{\eps^{2}} \abs{v}^2\dd{s}\dd{x}. 
\end{align*}
Precisely, we will consider the functionals $E_{\eps}:L^2(Q_\length)\to \overline{\R}$ with $\eps>0$ given by
\begin{align}\label{def:Falphaeps} 
E_{\eps}[v]= \tilde{E}_{\eps, \eps}[v] - \int_{Q_\length}\beta_\eps\frac{\mu_H}{\eps^{2}} \abs{v}^2\dd{s}\dd{x} 
\end{align}
for $v\in H^1_0(Q_\length)$ and $E_\eps[v]=\infty$ otherwise.
\subsection{The torsion-free case}
First, we discuss the case without rotation of the cross section regarding the Tang frame, i.e.\ $\tau=0$, which is substantially easier than 
dealing with non-vanishing torsion. 
In fact, it can be treated by
using only the expansion of the cross section problem (see Section~\ref{subsec:asymptotics}) as opposed 
to the expansion of the full problem, which is inevitable when 
considering twisted waveguides (see Section~\ref{sec:propagation}).

\subsubsection{$\Gamma$-convergence of the energies $\{E_\eps\}_\eps$}\label{subsec:Gammaconvergence_Eeps}
The strategy for characterizing the asymptotic behavior of \eqref{rescaledEVP}-\eqref{EVPepsilondelta_notorsion} 
with $\delta=\eps$ is to apply Lemma~\ref{theo:Gammaconvergence} 
to the sequence of functionals $\{E_\eps\}_\eps$. 
 
\begin{proposition}\label{prop:Gammaconvergence_Heps}
If hypotheses (H1)-(H2) are satisfied and if $\tau=0$, the sequence $\{E_\eps\}_\eps$ defined in \eqref{def:Falphaeps} meets the conditions $(i)$ and 
$(ii)$ of Lemma~\ref{theo:Gammaconvergence} and $\Gamma$-converges, with respect to the strong topology in $L^2(Q_\length)$, as $\eps \to 0$, 
to the functional $E_0$ given through
 \begin{align*}
 E_0[v]=\left\{\begin{array}{ll}
\displaystyle \int_0^\length \bar{a}\abs{\varphi'}^2 + (q_{H}+ q_{\xi}) \abs{\varphi}^2\dd{s}, & 
\!\!\text{if $v(s,x)=w_H(x) \varphi(s), (s,x) \in Q_\length, \varphi\in H_0^1(0,\length)$,}\\
+\infty, & \text{otherwise},
 \end{array}\right.
 \end{align*} 
with $\bar{a}=\int_Y a\ d{y}$ and $q_{H}$ and  $q_{\xi}$ defined in \eqref{def:q_H} and \eqref{def:q_xi}, respectively.
\end{proposition}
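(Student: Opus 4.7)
The plan is to invoke Lemma~\ref{theo:Gammaconvergence} with $H=L^2(Q_\length)$ endowed with the $\eps$-dependent inner products $(v,w)_\eps:=\int_{Q_\length}\beta_\eps\,vw\,\mathrm{d}s\,\mathrm{d}x$, which are uniformly equivalent to the standard one because $\beta_\eps\to 1$ uniformly by \eqref{beta}. The main device will be the orthogonal decomposition, for each $s\in[0,\length]$, of a test function as
\begin{equation*}
v(s,\cdot)=\psi(s)\,w_\eps(s,\cdot)+r(s,\cdot),\qquad \int_\omega\beta_\eps\,w_\eps\,r\,\mathrm{d}x=0,
\end{equation*}
where $(\mu_\eps(s),w_\eps(s))$ denotes the first normalized eigenpair of \eqref{epsperturbed_crossproblem}. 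Using the weak form of that eigenvalue equation together with the min-max principle on $\{w_\eps\}^\perp$, this yields the clean splitting
\begin{equation*}
\int_\omega a_\eps\beta_\eps|\nabla^x v|^2\,\mathrm{d}x=\mu_\eps(s)\,\psi(s)^2+\int_\omega a_\eps\beta_\eps|\nabla^x r|^2\,\mathrm{d}x\geq \mu_\eps(s)\,\psi(s)^2+\mu^{(1)}_\eps(s)\int_\omega\beta_\eps\,r^2\,\mathrm{d}x.
\end{equation*}

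Conditions $(i)$ and $(ii)$ of Lemma~\ref{theo:Gammaconvergence} both follow from this splitting in combination with Proposition~\ref{prop:asymptotics_mu}: the uniform bound on $(\mu_\eps(s)-\mu_H)/\eps^2$ gives $E_\eps[v]\geq -C\|v\|_{L^2}^2$, while the uniform spectral gap $\mu^{(1)}_\eps(s)-\mu_H\geq \alpha/2>0$ (from $\mu^{(1)}_\eps-\mu_\eps\geq\alpha$ and the uniform convergence $\mu_\eps\to\mu_H$) forces $\|r_\eps\|_{L^2(Q_\length)}\to 0$ for any sequence of bounded energy. The remaining $\int_{Q_\length}(a_\eps/\beta_\eps)(v'_\eps)^2$ bound, after using Lemma~\ref{lem:Lipschitz_continuity} (specifically $\|\partial_s w_\eps\|_{L^2(\omega)}\leq C\sqrt{\eps}$) to absorb cross terms, delivers a uniform $H^1_0(0,\length)$-bound on $\psi_\eps$; Rellich's theorem on $(0,\length)$ combined with $w_\eps\to w_H$ strongly in $L^2(Q_\length)$ then provides compactness, and any $L^2$-limit of $v_\eps$ is necessarily of product form $v=w_H\varphi$ with $\varphi\in H^1_0(0,\length)$.

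For the $\Gamma$-liminf, the cross-section contribution handled through the splitting above together with Proposition~\ref{prop:asymptotics_mu} and strong $L^2$-convergence of $\psi_\eps$ yields the potential term $\int_0^\length(q_H+q_\xi)\varphi^2\,\mathrm{d}s$. For the longitudinal term I would appeal to two-scale convergence: since $a_\eps(x)=a(x/\eps)$ oscillates only in the transverse variable, the $s$-derivative $\partial_s v_\eps$ two-scale-converges to $\partial_s v(s,x)=w_H(x)\varphi'(s)$ \emph{without} a corrector in the fast variable $y\in Y$, so that standard lower semicontinuity gives
\begin{equation*}
\liminf_{\eps\to 0}\int_{Q_\length}\frac{a_\eps}{\beta_\eps}(v'_\eps)^2\,\mathrm{d}s\,\mathrm{d}x\geq \int_{Q_\length\times Y}a(y)\,w_H(x)^2(\varphi'(s))^2\,\mathrm{d}s\,\mathrm{d}x\,\mathrm{d}y=\bar a\int_0^\length(\varphi')^2\,\mathrm{d}s.
\end{equation*}
The $\Gamma$-limsup is obtained by the natural recovery sequence $\tilde v_\eps(s,x):=\varphi(s)\,w_\eps(s,x)$: the normalization $\int_\omega\beta_\eps w_\eps^2\,\mathrm{d}x=1$ and the identity $\int_\omega a_\eps\beta_\eps|\nabla^x\tilde v_\eps|^2\,\mathrm{d}x=\mu_\eps(s)\varphi(s)^2$ produce exactly $\int_0^\length(q_H+q_\xi)\varphi^2\,\mathrm{d}s$ after subtracting the $\mu_H/\eps^2$ term via Proposition~\ref{prop:asymptotics_mu}, while the longitudinal term yields $\int_0^\length\bar a(\varphi')^2\,\mathrm{d}s$ thanks to $\int_\omega a_\eps w_\eps^2\,\mathrm{d}x\to\bar a$ pointwise in $s$ (weak-$\ast$ convergence of $a_\eps\rightharpoonup\bar a$ against strong $L^1(\omega)$-convergence of $w_\eps^2\to w_H^2$), with dominated convergence in $s$ and remainders of order $\sqrt{\eps}$ controlled through Lemma~\ref{lem:Lipschitz_continuity}.

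The principal technical obstacle is the $\Gamma$-liminf for the longitudinal term: although the transverse spectral-gap argument cleanly enforces the product structure of the limit and extracts the correct potential $q_H+q_\xi$, matching the homogenized coefficient $\bar a$ requires passing to the limit in a quadratic form with oscillating weight $a_\eps$ tested against the merely weakly convergent sequence $v'_\eps$. The two-scale framework sketched above is the cleanest route; an alternative direct argument would employ the decomposition $v_\eps=\varphi_\eps w_H+R_\eps$ with $\varphi_\eps(s):=\int_\omega v_\eps w_H\,\mathrm{d}x$, exploit $\int_\omega w_H R_\eps\,\mathrm{d}x=0$ for all $s$ (which propagates to $\int_\omega w_H\,\partial_s R_\eps\,\mathrm{d}x=0$), and invoke classical periodic homogenization to show that the residual interaction $\int(a_\eps-\bar a)w_H\,\varphi'_\eps\,\partial_s R_\eps\,\mathrm{d}s\,\mathrm{d}x$ vanishes in the limit.
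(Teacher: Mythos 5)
Your proposal is correct, and its skeleton matches the paper's (the abstract spectral Lemma~\ref{theo:Gammaconvergence}, the recovery sequence $\varphi\,w_\eps$ handled via \eqref{normalization}, Proposition~\ref{prop:asymptotics_mu} and the bound $\|\partial_s w_\eps\|_{L^2(\omega)}\le C\sqrt\eps$ from Lemma~\ref{lem:Lipschitz_continuity}), but the two central steps are carried out by genuinely different means. For compactness and the product structure of limits you use the fiberwise decomposition $v_\eps=\psi_\eps w_\eps+r_\eps$, orthogonal in the $\beta_\eps$-weighted inner product, plus the spectral gap $\mu_\eps^{(1)}(s)-\mu_\eps(s)\ge\alpha$ to force $\|r_\eps\|_{L^2(Q_\length)}\to0$ and to extract the potential term from $\eps^{-2}(\mu_\eps-\mu_H)\psi_\eps^2$; this is sound, provided you note that the gap must be uniform in $s$ (the paper's Lemma~\ref{lem:asymptotics_mueps} and the argument in Lemma~\ref{lem:Lipschitz_continuity} supply exactly this) and that $\psi_\eps\in H_0^1(0,\length)$ with the uniform bound you sketch. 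The paper instead gets compactness directly from uniform coercivity of the transverse term and identifies the product form by passing to the limit in the cross-section energies via classical homogenization (the $\Gamma$-limit with matrix $Q$ of \eqref{definitionQ}) together with the Rayleigh quotient for $\mu_H$ in \eqref{homogenized_crossproblem}; your route buys a more quantitative control ($\|r_\eps\|\le C\eps$), the paper's avoids any use of the second eigenvalue. For the longitudinal liminf you appeal to two-scale convergence: the key claim that the two-scale limit of $v_\eps'$ carries no $y$-corrector is indeed true, precisely because the fast variable is purely transverse (integrating by parts in $s$ against $\psi(s,x,x/\eps)$ produces no $\eps^{-1}$, so the two-scale limit of $v_\eps'$ is $\partial_s v=w_H\varphi'$), and the weighted lower semicontinuity $\liminf_\eps\int a_\eps|v_\eps'|^2\dd{s}\dd{x}\ge\int_{Q_\length}\int_Y a(y)|w_H\varphi'|^2\dd{y}\dd{s}\dd{x}=\bar a\int_0^\length|\varphi'|^2\dd{s}$ then requires the standard argument with $a\psi$ as admissible test function; both facts are standard but should be spelled out. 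The paper reaches the same conclusion more elementarily: writing $v_\eps=w_H\varphi+z_\eps$, it uses that $a_\eps$ is independent of $s$, so $a_\eps z_\eps'=(a_\eps z_\eps)'\weakly0$ in $H^{-1}(Q_\length)$ and hence, being bounded in $L^2$, weakly in $L^2(Q_\length)$; the cross term dies, the nonnegative term $\int a_\eps|z_\eps'|^2$ is dropped, and $\int_\omega a_\eps w_H^2\dd{x}\to\bar a$ closes the estimate -- your alternative ``direct argument'' at the end is essentially this observation. The limsup part of your proposal coincides with the paper's proof.
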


\begin{proof} We split the proof into five steps. Step 1 and  Step 2 are dedicated to the proof of conditions $(i)$ and $(ii)$. 
In Step 3 we characterize the limit of the bounded sequences with bounded energies. Finally, in Step 4 and Step 5, 
we prove the $\Gamma$-convergence of the sequence $\{E_\eps\}_\eps$.

{\it{Step~1: Lower bound.}} Recall that $(\mu_\eps(s), w_\eps(s))$ denotes the first normalized eigenpair of
\eqref{epsperturbed_crossproblem} for $s\in [0,\length]$ and $\eps>0$. Besides, the variational formula for the first eigenvalue entails the representation
\begin{align}\label{minmax_mu0}
 \mu_\eps(s) = \inf_{w\in H_0^1(\omega), w\neq 0} 
\frac{\int_{\omega} a_\eps \beta_\eps(s)\abs{\nabla w}^2\dd{x}}{\int_\omega\beta_\eps(s) \abs{w}^2\dd{x}}
=\int_\omega a_\eps\beta_\eps(s) \abs{\nabla w_\eps(s)}^2\dd{x},
\end{align}
where we used~\eqref{normalization}.
From \eqref{minmax_mu0} one derives for $v\in H_0^1(Q_\length)$,
\begin{align*}
E_{\eps}[v]\geq \frac{1}{\eps^{2}} \int_{Q_\length}a_\eps\beta_\eps\, \abs{\nabla^x v}^2 - \mu_H\beta_\eps\abs{v}^2 \dd{s} \dd{x}
\geq \int_0^\length \frac{\mu_\eps - \mu_H}{\eps^{2}} \Bigl(\int_{\omega} \beta_\eps \abs{v}^2\dd{x}\Bigr)\dd{s}.
\end{align*}
As $\displaystyle \lim_{\eps\to 0}\int_{\omega} \beta_\eps(s) \abs{v(s,\cdot)}^2\dd{x}=\norm{v(s, \cdot)}_{L^2(\omega)}^2$ uniformly in $s$, it follows from 
Proposition~\ref{prop:asymptotics_mu} that
\begin{align*}
 \int_0^\length \frac{\mu_\eps - \mu_H}{\eps^{2}} \int_{\omega} \beta_\eps \abs{v}^2\dd{x}\dd{s} \to \int_0^\length (q_{H}+q_\xi)\, \norm{v}_{L^2(\omega)}^2\dd{s}
\end{align*}
for $\eps\to 0$.
Since $\xi\in W^{1,\infty}((0,\length);\R^2)$ by \textit{(H2)}, the $L^\infty(0, \length)$-norm of $q_{\xi}$ 
is bounded, which guarantees the existence of a constant $c>0$ such that $E_\eps[v]\geq -c\norm{v}^2_{L^2(Q_\length)}$ 
for all $v\in H_0^1(Q_\length)$ and all $\eps$ sufficiently small.

{\it{Step~2: Compactness of sequences with bounded energy.}} Let $\{v_\eps\}_\eps$ be a bounded sequence in $L^2(Q_\length)$ such that
\begin{align*}
 \sup_\eps {E_\eps[v_\eps]}\leq C<\infty. 
\end{align*}
Exploiting the structure of the second summand of $\tilde{E}_{\eps, \eps}$, in particular the fact that in view 
of~\eqref{uniformbounds} and \eqref{beta}, $a_\eps \beta_\eps\geq c>0$ uniformly in $s$ for $\eps$ sufficiently small,
yields $\norm{\nabla^x v_\eps}_{L^2(Q_\length)}\leq C <\infty$. On the other hand, since 
$a_\eps/\beta_\eps\ge c>0$, 
independently of $s$ and $\eps$, for $\eps$ small enough, we derive from the first term in $\tilde{E}_{\eps, \eps}$ with $\tau=0$ by using the lower 
bound of Step~1 that $\norm{v_\eps'}_{L^2(Q_\length)}\leq C <\infty$. 

Hence, $\{v_\eps\}_\eps$ is bounded in 
$H^1_0(Q_\length)$ uniformly with respect to $\eps$ and we infer the existence of a $v\in H_0^1(Q_\length)$ such that, 
up to a subsequence, $v_\eps \weakly v$ in $H^1_0(Q_\length)$ and
by compact embedding $v_\eps\to v$ in $L^2(Q_\length)$. 

{\it{Step~3: Separation of variables.}} We assert that the limit function $v$ is of the form 
\begin{align}\label{splitting}
 v(s, x)=w_H(x) \varphi(s), \qquad \text{$(s,x)\in Q_\length$,}
\end{align}
with $\varphi\in H_0^1(0,\length)$.
Indeed, if we consider a bounded energy sequence $\{v_\eps\}_\eps$ as in Step~2, we get 
\begin{align}\label{estimate_splitting}
  0 &\leq \int_0^\length\int_{\omega} Q \nabla^x v\cdot \nabla^x v -\mu_H \abs{v}^2 \dd{x}\dd{s}\nonumber\\
&\leq \int_0^\length \liminf_{\eps\to 0}\Bigl(\int_{\omega} a_\eps \,\abs{\nabla^x v_\eps}^2\dd{x}\Bigr)\dd{s} 
 - \lim_{\eps\to 0}\int_{Q_l} \mu_H \,\abs{v_\eps}^2\dd{s}\dd{x} \nonumber\\
&\leq \liminf_{\eps\to 0} \int_{Q_\length} a_\eps \beta_\eps\,\abs{\nabla^x v_\eps}^2 - \mu_H \beta_\eps\abs{v_\eps}^2\dd{s}\dd{x} 
\leq \lim_{\eps\to 0}\,C\eps^{2}=  0.
\end{align}
The first inequality is a consequence of $\mu_H$ being the first eigenvalue of \eqref{homogenized_crossproblem}, 
and the second estimate follows from a classical homogenization result that guarantees, for almost every $s\in (0,l)$, the $\Gamma$-convergence of the functional 
$\displaystyle \int_\omega a_\eps |\nabla^x v|^2\dd{x}$ to $\displaystyle \int_\omega Q \nabla^x v \cdot \nabla^x v\dd{x}$ 
with $Q$ defined in~\eqref{definitionQ}
(see for instance~\cite[Chapter 5]{JikovKozlovOleinik94}) . 

Since the left-~and right-hand side of~\eqref{estimate_splitting} coincide, all the inequalities turn into equalities and
$v$ satisfies
\begin{align}\label{characterizing_forumla_u0}
 \int_0^\length\int_{\omega} Q \nabla^x v \cdot \nabla^x v - \mu_H\abs{v}^2\dd{x}\dd{s}=0.
\end{align} 
Accounting for the Rayleigh quotient representation of $\mu_H$, we find that 
\begin{align*}
s\mapsto \int_{\omega} Q\nabla^x v(s,x) \cdot \nabla^x v(s, x) -\mu_H \abs{v(s,x)}^2\dd{x}
\end{align*}
is non-negative. Therefore, due to \eqref{characterizing_forumla_u0} this function 
vanishes almost everywhere in $[0,\length]$. Using the minimum formula for $\mu_H$ entails the existence of a function $\varphi:[0,\length]\to\R$ such that
$v(s, x)=w_H(x)\varphi(s)$ for almost all $(s,x)\in Q_\length$. We know that $v \in H_0^1(Q_\length)$ and 
$w_H\in C^\infty(\bar{\omega})\cap H_0^1(\omega)$. Thus, $\varphi\in H_0^1(0,\length)$.   
 
{\it{Step~4: Liminf-inequality.}}  
Let $v_\eps\to v$ in $L^2(Q_\length)$ and assume without loss of generality that 
$\lim_{\eps \to 0}E_{\eps}[v_\eps]=\liminf_{\eps\to 0} E_\eps[v_\eps]< \infty$. 
By Steps~2 and~3, $\{v_\eps\}_\eps\subset H_0^1(Q_\length)$ is uniformly bounded in $H^1_0(Q_\length)$ and $v$ is of the form \eqref{splitting}. We define for every 
$\eps>0$ the auxiliary function $z_\eps=v_\eps-v= v_\eps-w_H\varphi$. 
Notice that by construction $\norm{z_\eps}_{H^1_0(Q_\length)}\leq C$ for all $\eps>0$ and $z_\eps\to 0$ in $L^2(Q_\length)$.
Together with Step~1 it follows that  
\begin{align*}
&\lim_{\eps\to 0} E_{\eps}[v_\eps] \geq \liminf_{\eps\to 0}\int_{Q_\length}  \frac{a_\eps}{\beta_\eps} \,\abs{v_\eps'}^2\dd{s}\dd{x} 
+ \int_0^\length (q_{H}+q_{\xi})\norm{v}^2_{L^2(\omega)}\dd{s}\\
&\qquad= \liminf_{\eps\to 0} \int_{Q_\length}   a_\eps \,\abs{w_H\varphi' + z_\eps'}^2\dd{s}\dd{x}
+ \int_0^\length (q_{H}+q_{\xi})\norm{w_H}_{L^2(\omega)}^2\abs{\varphi}^2\dd{s}\\
&\qquad= \liminf_{\eps\to 0}\Bigl(\int_{\omega}  a_\eps\, \abs{w_H}^2\dd{x}\Bigr)\Bigl(\int_0^\length\abs{\varphi'}^2\dd{s}\Bigr)  + \int_{Q_\length} 
a_\eps
\,\abs{z_\eps'}^2\dd{s}\dd{x} \\
&\qquad \qquad \qquad \qquad\qquad \qquad \qquad \qquad
+ 2\int_{Q_\length} \left( a_\eps z_\eps'\right) (w_H\varphi')\dd{s}\dd{x}
+ \int_0^\length (q_{H}+q_{\xi})\abs{\varphi}^2\dd{s}\\
&\qquad\geq \bar{a} \int_0^\length \, \abs{\varphi'}^2 \dd{s}+ \int_0^\length (q_{H}+q_{\xi})\abs{\varphi}^2\dd{s}= E_0[v].
\end{align*}
In the estimate above we used $\beta_\eps\to 1$ uniformly in $Q_\length$, $\norm{w_H}_{L^2(\omega)}=1$, $a_\eps\weaklystar\bar{a}$ in $L^\infty(\omega)$ 
and, since $a_\eps$ does not depend on $s$, $a_\eps z_\eps'\weakly 0$ in $L^2(Q_\length)$ for $\eps\to 0$.
Indeed, from the uniform boundedness of $\{a_\eps z_\eps'\}_\eps$ in $L^2(Q_\length)$ we infer 
that, after passing to a subsequence, $ a_\eps z_\eps'\weakly z_0$ in $L^2(Q_\length)$ for some $z_0\in L^2(Q_\length)$. 
On the other hand, $a_\eps z_\eps\weakly 0$ in $L^2(Q_\length)$ yields $a_\eps z_\eps'= 
( a_\eps z_\eps)' \weakly 0$ in $H^{-1}(Q_\length)$. 
Hence, a comparison of the limits gives $z_0=0$ and we may conclude weak convergence of the full sequence $\{a_\eps z_\eps'\}_\eps$.

{\it{Step~5: Recovery sequence.}} Let $v\in L^2(Q_\length)$ be of the form $v=w_H\varphi$ with
$\varphi\in H_0^1(0,\length)$. 
We define the sequence $\{v_\eps\}_\eps\subset H_0^1(Q_\length)$ by setting
\begin{align*}
 v_\eps(s,x)=w_\eps(s,x)\varphi(s), \qquad\text{$(s,x)\in Q_\length$},
\end{align*}
where for each $s\in [0, l]$ and $\eps>0$, $\ w_\eps(s, \cdot)$ is the first eigenfunction of~\eqref{epsperturbed_crossproblem}. By construction,
Proposition~\ref{prop:asymptotics_mu} implies $v_\eps\to v$ in $L^2(Q_l)$.

From Lemma \ref{lem:Lipschitz_continuity} we have that $s\mapsto w_\eps(s)$ is a Lipschitz continuous function from $[0, l]$ into $L^2(\omega)$, 
with Lipschitz constant $C\sqrt\eps$. Then, $w_\eps(s)$ is almost everywhere differentiable in $s$ and 
$\displaystyle \|w_\eps' (s)\|_{L^2(\omega)}\le C\sqrt\eps$ for all $s\in [0,l]$. Consequently, as $\eps\to 0$,
\begin{align}\label{sderivative}
\int_{Q_l} |w_\eps' (s, x)|^2\dd{s}\dd{x}\to 0.
\end{align}
Since $w_\eps\to w_H$ in $L^2(Q_l)$ by Proposition~\ref{prop:asymptotics_mu}, and 
$a_\eps\weaklystar\bar a$ in $L^{\infty}(\omega)$ and therefore also in $L^\infty(Q_l)$, we obtain together with~\eqref{sderivative} that
\begin{align*}
\limsup_{\eps\to 0} \int_{Q_\length} a_\eps \abs{v_\eps'}^2\dd{s}\dd{x} &
=\limsup_{\eps\to 0} \int_{Q_\length} a_\eps \abs{w_\eps}^2\, |\varphi'|^2\dd{s}\dd{x} \\ &= \bar a \norm{w_H}^2_{L^2(\omega)} \int_0^l |\varphi'|^2\dd{s}=\bar a\int_0^l |\varphi'|^2\dd{s}.
\end{align*}
Then, using again Proposition~\ref{prop:asymptotics_mu}, in combination with $\beta_\eps\to 1$ uniformly in $Q_l$ and~\eqref{normalization}, gives
\begin{align*}
&\limsup_{\eps\to 0} E_{\eps}[v_\eps]
=\limsup_{\eps\to 0} \int_{Q_\length} \frac{a_\eps}{\beta_\eps} \, \abs{v_\eps'}^2\dd{s}\dd{x}\\
&\qquad \qquad\qquad\qquad\qquad\qquad\qquad+ \limsup_{\eps \to 0} \frac{1}{\eps^{2}} \int_0^\length\Bigl(\int_\omega a_\eps \beta_\eps \abs{\nabla^x w_\eps}^2 
- \mu_H\beta_\eps\abs{w_\eps}^2 \dd{x}\Bigr) \abs{\varphi}^2 \dd{s}\\
&\quad=\limsup_{\eps\to 0} \int_{Q_\length} a_\eps \, \abs{v_\eps'}^2\dd{s}\dd{x}
+ \int_0^\length\limsup_{\eps\to 0} \frac{\mu_\eps-\mu_H}{\eps^{2}} 
\,\Bigl(\int_\omega \beta_\eps\abs{w_\eps}^2 \dd{x} \Bigr)\,\abs{\varphi}^2 \dd{s}\\
&\quad=\bar{a}\int_0^\length \abs{\varphi'}^2\dd{s}+\int_0^\length (q_H + q_\xi)\ \abs{\varphi}^2\dd{s}=E_0[v],
\end{align*}
which shows the required limsup-inequality and concludes the proof of Proposition~\ref{prop:Gammaconvergence_Heps}.
\end{proof}

\subsubsection{Statement and proof of the main result}\label{subsec:mainresult}
Finally, we can formulate our main theorem capturing the full asymptotics 
of~\eqref{rescaledEVP}-\eqref{EVPepsilondelta_notorsion} for 
$\delta=\eps$ under the assumption of vanishing torsion. 
For this purpose, we recall that $(\mu_H, w_H)$ is the first eigenpair of the homogenized 
cross section problem~\eqref{homogenized_crossproblem}
and set $(\eta_P^{(j)}, \varphi_P^{(j)})$ to be the $j$th eigenpair of 
the Sturm-Liouville eigenvalue problem
 \begin{align*}
  -\bar{a}\varphi'' + q \varphi =\eta\varphi, \qquad \varphi\in H_0^1(0,\length),
 \end{align*}
where $q=q_{H} + q_\xi$ with $q_{H}$ and $q_\xi$ defined in~\eqref{def:q_H} and~\eqref{def:q_xi}, respectively.

\begin{theorem}\label{theo:mainresult_cylinder}
Suppose that the hypotheses (H1)-(H2) are satisfied and that $\tau=0$. For $j\in \N_0$, 
let $\{(\lambda_\eps^{(j)}, u_\eps^{(j)})\}_\eps$ be a sequence of $j$th 
eigenpairs for the spectral problem~\eqref{rescaledEVP}-\eqref{EVPepsilondelta_notorsion} with $\delta=\eps$.
Then, for every $\eps>0$ one has 
\begin{align*}
\lambda_\eps^{(j)}= \frac{\mu_H}{\eps^{2}} + \eta_{\eps}^{(j)},\qquad\text{where }
\lim_{\eps\to 0}\eta^{(j)}_\eps=\eta_P^{(j)},
\end{align*}
and the sequence of eigenfunctions $\{u_\eps^{(j)}\}_\eps$ converges, up to a subsequence, in the following sense: 
\begin{align*}
 u_\eps^{(j)}\circ \psi_{\eps}=v_\eps^{(j)}\quad\longrightarrow\quad 
v^{(j)}:=w_H \,\varphi^{(j)}_P \qquad \text{in $L^2(Q_\length)$ as $\eps \to 0$.}
\end{align*} 
Here, $\psi_{\eps}$ is the parameter transformation introduced in~\eqref{rescaling}. 

Conversely, any such $v^{(j)}$ is the $L^2(Q_\length)$-limit of a sequence $\{u_\eps^{(j)}\circ \psi_{\eps}\}_\eps$ 
with $u_\eps^{(j)}$ an eigenfunction of~\eqref{rescaledEVP}-\eqref{EVPepsilondelta_notorsion} corresponding to the eigenvalue $\lambda_\eps^{(j)}$. 
\end{theorem}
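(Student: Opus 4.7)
The plan is to deduce the theorem as an immediate consequence of the abstract spectral convergence result in Lemma~\ref{theo:Gammaconvergence}, applied to a suitably shifted version of the rescaled eigenvalue problem. Observe that \eqref{rescaledEVP}--\eqref{EVPepsilondelta_notorsion} with $\delta=\eps$ is the spectral problem for a self-adjoint operator $\tilde{\Acal}_{\eps}$ on $L^2(Q_\length)$ equipped with the $\eps$-dependent inner product
\begin{align*}
(u,v)_\eps=\int_{Q_\length}\beta_\eps\, u\,v\,\dd{s}\dd{x},
\end{align*}
whose associated (extended-valued) quadratic form on $L^2(Q_\length)$ is precisely $\tilde{E}_{\eps,\eps}$ (extended by $+\infty$ outside $H_0^1(Q_\length)$). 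Since $\beta_\eps\to 1$ uniformly on $Q_\length$ as $\eps\to 0$, the inner products $(\cdot,\cdot)_\eps$ satisfy the equivalence requirement of Lemma~\ref{theo:Gammaconvergence} with constants $c_\eps,d_\eps\to 1$.

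First I would shift the spectrum: by subtracting the multiple $\mu_H/\eps^2$ of the identity (understood as the multiplication operator with respect to $(\cdot,\cdot)_\eps$), we obtain a sequence of self-adjoint operators $\Bcal_\eps$ whose eigenvalues are exactly $\eta_\eps^{(j)}:=\lambda_\eps^{(j)}-\mu_H/\eps^2$ and whose quadratic forms coincide with $E_\eps$ defined in~\eqref{def:Falphaeps}. Thus it suffices to verify, for the sequence $\{E_\eps\}_\eps$, the three hypotheses of Lemma~\ref{theo:Gammaconvergence}: a uniform lower bound, compactness of bounded-energy sequences in $L^2(Q_\length)$, and $\Gamma$-convergence. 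All three are furnished by Proposition~\ref{prop:Gammaconvergence_Heps}, which identifies the $\Gamma$-limit as
\begin{align*}
E_0[v]=\int_0^\length \bar a\,\abs{\varphi'}^2+(q_H+q_\xi)\abs{\varphi}^2\dd{s}
\end{align*}
when $v(s,x)=w_H(x)\varphi(s)$ with $\varphi\in H_0^1(0,\length)$, and $+\infty$ otherwise.

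Next I would identify the limit operator $\Bcal_0$ delivered by Lemma~\ref{theo:Gammaconvergence}. Since the domain of $E_0$ consists of the separated functions $w_H\varphi$ and $\norm{w_H}_{L^2(\omega)}=1$, the closure of this domain in $L^2(Q_\length)$ is $w_H\otimes L^2(0,\length)$, isometrically isomorphic to $L^2(0,\length)$ via $w_H\varphi\mapsto\varphi$. Under this identification $E_0$ becomes the standard quadratic form of the Sturm--Liouville operator $-\bar a\,\partial_s^2+q$ with Dirichlet boundary conditions, so $\Bcal_0$ coincides with this operator on $H_0^1(0,\length)\cap H^2(0,\length)$. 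Its eigenpairs are therefore exactly $(\eta_P^{(j)},\varphi_P^{(j)})$, the eigenpairs of~\eqref{Sturm-Liouville}-problem stated before the theorem.

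Finally, the spectral convergence conclusion of Lemma~\ref{theo:Gammaconvergence} yields $\eta_\eps^{(j)}\to\eta_P^{(j)}$ for every $j\in\N_0$, which rewritten in terms of $\lambda_\eps^{(j)}$ gives the claimed decomposition $\lambda_\eps^{(j)}=\mu_H/\eps^2+\eta_\eps^{(j)}$; moreover, up to a subsequence, the corresponding rescaled eigenfunctions $v_\eps^{(j)}=u_\eps^{(j)}\circ\psi_\eps$ converge strongly in $L^2(Q_\length)$ to eigenvectors of $\Bcal_0$, which by the identification above are precisely of the form $w_H\,\varphi_P^{(j)}$. The converse statement is the second part of the conclusion of Lemma~\ref{theo:Gammaconvergence}. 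I expect no serious obstacle here once Proposition~\ref{prop:Gammaconvergence_Heps} is in hand; the only point requiring a small verification is that the shift by $\mu_H/\eps^2$ and the $\eps$-dependent weight $\beta_\eps$ in the inner product fit the abstract framework of Lemma~\ref{theo:Gammaconvergence}, which is immediate since $\beta_\eps\to 1$ uniformly.
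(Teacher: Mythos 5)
Your proposal follows essentially the same route as the paper: shift the operator by $\mu_H/\eps^2$ so that its quadratic form becomes $E_\eps$, apply Lemma~\ref{theo:Gammaconvergence} with the $\beta_\eps$-weighted inner product (using that $\beta_\eps\to1$ uniformly), invoke Proposition~\ref{prop:Gammaconvergence_Heps} for conditions $(i)$--$(iii)$, and read off the Sturm--Liouville limit operator $\Bcal_0$ from the $\Gamma$-limit $E_0$. The argument is correct; the paper's version is more terse but makes exactly the same reductions.
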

\begin{proof}
To conclude we simply need to join the previous results of Sections~\ref{subsec:Gammaconvergence_Eeps} and~\ref{subsec:Gamma} together.
In fact, Proposition~\ref{prop:Gammaconvergence_Heps} allows us to apply Lemma~\ref{theo:Gammaconvergence} to $\{E_\eps\}_\eps$ with $H_\eps=L^2(Q_l)$
endowed with the inner product $(u,v)_\eps:=\int_{Q_l}\beta_\eps u v\dd{x}$ for $u,v\in L^2(Q_l)$. We recall that $\beta_\eps$ given by~\eqref{beta} 
converges uniformly to $1$.
As a consequence, the eigenpairs of 
\begin{align*}
 \Bcal_\eps:=\Acal_{\eps, \eps}-\mu_H\eps^{-2}
\end{align*}
(see~\eqref{EVPepsilondelta_notorsion} for the definition of $\Acal_{\eps, \eps}$)
converge to those of a limit operator $\Bcal_0$.
Owing to the structure of the $\Gamma$-limit functional $E_0$ in Proposition~\ref{prop:Gammaconvergence_Heps} the operator 
$\Bcal_0:L^2(Q_\length)\to L^2(Q_\length)$ is given through
\begin{align*}
\Bcal_0 v= (-\bar{a}\varphi''+ q \varphi)w_H, \qquad \text{$v\in \Dcal(\Bcal_0)$,}   
\end{align*}
with $\Dcal(\Bcal_0)=\{v\in L^2(Q_\length): v=w_H\varphi,\, \varphi\in H_0^1(0,\length)\}$.
Thus, the proof of Theorem~\ref{theo:mainresult_cylinder} is complete.
\end{proof}

\subsection{The general case with torsion}\label{sec:propagation}
If the cross section rotates relatively to the classical Tang frame, i.e.~$\tau\neq 0$, an expansion of the full eigenvalue problem 
(see~\eqref{rescaledEVP}-\eqref{EVPepsilondelta} with $\delta=\eps$)
\begin{align}\label{full_eigenvalueproblem}
 \Acal_{\eps, \eps} v_\eps
=\lambda_\eps \beta_\eps v_\eps, 
\qquad v_\eps\in H_0^1(Q_\length),
\end{align}
is necessary to capture its asymptotic 
behavior as $\eps$ tends to zero.

Performing an asymptotic expansion of~\eqref{full_eigenvalueproblem} with the ansatz 
\begin{align}\label{ansatz:full_asymptotic_expansion}
v^\ast_\eps(s,x)&= v_0(s,x,y)+\eps v_1(s,x,y)+\eps^2 v_2(s,x,y)+\eps ^3 v_3(s,x,y)+\eps^4 v_4(s,x,y),\quad y=x/\eps,\nonumber\\
\lambda^\ast_\eps(s)&= \eps^{-2}\lambda_{-2} + \eps^{-1}\lambda_{-1} + \eps\lambda_0,\\
\beta_\eps(s,x)&= 1-\eps(\xi(s)\cdot x), \qquad
\beta^{-1}_\eps(s,x)= 1+\eps (\xi(s)\cdot x) + \eps^2(\xi(s)\cdot x)^2+\ldots.\nonumber
\end{align}
entails
\begin{align*}
 \lambda_{-2}=\mu_H, \qquad \lambda_{-1}=0,\qquad \lambda_0=\eta_P.
\end{align*}
Generalizing the definition in Section~\ref{subsec:mainresult}, we denote by $(\eta_P, \varphi_P)$ the 
first eigenpair of the Sturm-Liouville problem
\begin{align}\label{EVP_1d_torsion}
 -\bar{a}\varphi''+q\varphi=\eta\varphi, \qquad\varphi\in H_0^1(0,\length),
\end{align}
with the potential 
\begin{align}\label{def:q_torsion}
 q=q_\tau + q_{H}+q_{\xi},
\end{align}
where $q_{H}, q_{\xi}$ are defined in~\eqref{def:q_H},~\eqref{def:q_xi}, respectively, and
\begin{align*}
q_\tau(s):= \tau(s)^2
T_{ijkl}\left(\int_{\omega}(Rx)_i\partial_j^x w_H(x) (Rx)_k \partial_l^x w_H(x)\dd{x} \right), \qquad s\in [0,l].
\end{align*}
For $i,j,k,l\in\{1,2\}$, 
\begin{align*}
 T_{ijkl}:= Q_{ij}\delta_{kl}  +  \int_{Y}a \partial_l^y \vartheta_{ijk} \dd{y},
\end{align*}
where $\vartheta=(\vartheta_{ijk})\in H^1_\#(Y;\R^{8})$ has mean value zero with respect to the unit cell and solves
\begin{align*}
 -\diverg^y(a\nabla^y \vartheta_{ijk})=\partial_k^y (a\partial_j^y \phi_i) + \delta_{ij}\partial_k^y a.
\end{align*}
Making use of the expansion for the cross section problem in Section~\ref{subsubsec:formal_expansion}, we determine the coefficients of the eigenmodes as
\begin{align*}
 v_0(s,x,y) &= w_0(s,x,y)\varphi_P(s)=w_H(x)\varphi_P(s),\nonumber\\
 v_1(s,x,y) &= w_1(s,x,y)\varphi_P(s),\\
 v_2(s,x,y) &= w_2(s,x,y)\varphi_P(s) + \bar{v}_2(s,x),\nonumber\\
 v_3(s,x,y) &= w_3(s,x,y)\varphi_P(s) + \bar{v}_3(s,x). \nonumber
\end{align*}
Here,
\begin{align*}
 \bar{v}_3(s,x)&=\tau(s)^2\vartheta_{ijk}(y)(Rx)_j(Rx)_k\partial_i^x w_H(x) \varphi_P(s)\nonumber\\
&\qquad\qquad\qquad\qquad\qquad+  \tau(s)\phi_i(y)(Rx)_iw_H(x)\varphi'_P(s)+\phi_i(y)\partial_i^x\bar{v}_2(s,x),
\end{align*}
and $\bar{v}_2(s,x)= \bar{w}_{21}(s,x)\varphi_P(s) + \bar{w}_{22}(s,x)\varphi'_P(s)$.
For fixed $s\in [0,\length]$ the functions $\bar{w}_{21}(s,\cdot)$ and $\bar{w}_{22}(s, \cdot)$ are orthogonal to $w_H$ regarding the $L^2(\omega)$-inner 
product and solve
\begin{align*}
 -Q_{ij} \partial_{ij}^x \bar{w}_{21} -\mu_H \bar{w}_ {21} = q_\tau w_H + \tau^2 T_{ijkl} (Rx)_k(Rx)_j\partial_{il}^x w_H 
+ \tau' Q_{ij} (Rx)_j\partial_i^x w_H
\end{align*}
and 
\begin{align*}
  -Q_{ij} \partial_{ij}^x \bar{w}_{22} -\mu_H \bar{w}_{22} = 2 \tau Q_{ij} (Rx)_j \partial_i^x w_H
\end{align*}
in $H_0^1(\omega)$, respectively. The coefficient of order four in \eqref{ansatz:full_asymptotic_expansion} takes the form $v_4=w_4\varphi_P+\bar{v}_4$.
We give the explicit expressions of $w_4$ and $\bar{v}_4$ in Appendix~\ref{appendix_A1} and~\ref{appendix_A2}.

Based on the results of the full asymptotic expansion above, we conjecture that the statement 
of Theorem~\ref{theo:mainresult_cylinder} for the case without torsion remains true if $q$ is given by~\eqref{def:q_torsion} and $(\eta^{(j)}_P, \varphi^{(j)}_P)$ is the 
$j$th eigenpair of~\eqref{EVP_1d_torsion}.
At this point, however, a rigorous argument is still missing. The difficulties lie in the proof of the lower bound for a $\Gamma$-convergence result
in the spirit of Proposition~\ref{prop:Gammaconvergence_Heps}, and are due
to a lack of appropriate compactness.
\begin{remark}
Considering a tube with homogeneous cross section, i.e.\ $a\equiv 1$, the formulas for the potential 
$q=q_\tau +q_H + q_\xi$ simplify to
\begin{align*}
q_H =0,\qquad q_\xi =-\frac{1}{4} \abs{\xi}^2,\qquad q_{\tau}=\tau^2 \int_{\omega}\abs{\nabla w_H\cdot Rx}^2\dd{x}.
\end{align*}
This is in agreement with~\cite{BMT07}. 
\end{remark}

\section{Waveguides with an inhomogeneous cross section}\label{sec:case_inhomogeneous}

This section covers the regime $\eps=1$, so that for $\delta>0$ the energy functional \eqref{changedenergy} reads
 \begin{align*}
 \tilde{E}_{1,\delta} [v]= \int_{Q_\length} \frac{a}{\beta_\delta} \abs{v' + \tau (\nabla^x v\cdot Rx)}^2 
+ \frac{a \beta_\delta}{\delta^2} \abs{\nabla^x v}^2\dd{s}\dd{x},
 \end{align*}
with $a\in L^\infty(\omega)$ such that $a(x)\geq c>0$ for almost every $x\in \omega$.

\subsection{Asymptotic analysis of the cross section eigenvalue problem}\label{subsec:asymptotics_inhomogeneous} 
In analogy to Section~\ref{sec:case_oscillatingcoefficients}, we start by studying the cross section problem
\begin{align}\label{cross_sec_perturb}
 -\diverg(a \beta_\delta(s)\nabla w)=\mu\beta_\delta(s) w, \qquad w\in H^1_0(\omega),
\end{align}
for $s\in [0,\length]$, the first (normalized) eigenpair of which is denoted by $(\mu_\delta(s), w_\delta(s))$. 
Due to the lack of a fast variable there is no need for a two-scale ansatz when emulating the formal computations of the previous section. 
A simple one-scale asymptotic expansion of \eqref{cross_sec_perturb} up to second order yields
\begin{align}\label{expansion_inhomogeneous} 
\mu_\delta^\ast(s) &= \mu_0(s) + \delta \mu_1(s) + \delta^2 \mu_2(s) = \mu_C + \delta (b\cdot \xi(s)) + \delta^2 q_{\rm c}(s),\nonumber\\
 w_\delta^\ast(s,x) &= w_0(s,x) + \delta w_1(s,x) + \delta^2 w_2(s,x) \\ 
&= w_C(x) + \delta \hat{w}(x)\cdot \xi(s) + \delta^2 \bar{w}(x)\xi(s)\cdot \xi(s), \qquad (s,x)\in Q_\length.\nonumber
\end{align}
Let us now explain the quantities involved. By $(\mu_C, w_C)$ we denote the first eigenpair of the cross section problem
\begin{align}\label{cross_sec}
 -\diverg(a \nabla v)=\mu v, \qquad v\in H^1_0(\omega),
\end{align}
We assume that $w_C$ is normalized, i.e.\ $\int_{Q_\length}w_C^2\dd{x}=1$.
While $\xi$ was defined in \eqref{beta}, the vector $b$ represents 
\begin{align}\label{def:b}
 b:=\int_\omega a \nabla w_C w_C\dd{x},
\end{align}
and $\hat{w}\in H^1_0(\omega;\R^2)$ solves
\begin{align*}
 -\diverg(a\nabla \hat{w}) - \mu_C \hat{w}= -a\nabla w_C +  b w_C, \qquad \hat{w}\in H^1_0(\omega;\R^2),
\end{align*}
with its components orthogonal to $w_C$ in $L^2(\omega)$.
The second-order term $q_{\rm c}$ has the form 
\begin{align}\label{def:qc}
 q_{\rm c}(s)= B \xi(s) \cdot \xi(s) \qquad\text{with } B= \int_\omega a \bigl((\nabla\hat{w})^T + \nabla w_C\otimes x\bigr) w_C \dd{x}.
\end{align}
For $w_2$ one finds $w_2=\bar{w}\xi\cdot\xi$, where $\bar{w}\in H_0^1(\omega;\R^{2\times 2})$ is the solution of 
\begin{align*}
  -\diverg(a\nabla \bar{w}) - \mu_C \bar{w}= -a \bigl((\nabla \hat{w})^T + \nabla w_C\otimes x\bigr) 
+ b\otimes\hat{w}  + B w_C, \qquad w\in H^1_0(\omega),
\end{align*}
satisfying $\int_\omega \bar{w} w_C\dd{x}=0$. 

The asymptotic behavior of~\eqref{cross_sec_perturb} is made rigorous in the following analog of Proposition~\ref{prop:asymptotics_mu}.
\begin{proposition}\label{lem:asymptotics_mu_inhomo}
The following convergence holds uniformly in $[0,l]$:
\begin{align*}
\lim_{\delta\to 0} \frac{\mu_\delta(s) -\bigl(\mu_C+\delta(b\cdot \xi(s))\bigr)}{\delta^2} = q_{\rm c}(s),
\end{align*} 
with $q_{\rm c}$ defined in~\eqref{def:qc}.
Moreover, $w_\delta\to w_C$ in $H^1_0(Q_\length)$ as $\delta\to 0$, where $w_\delta(s,x)=w_\delta(s)(x)$ for $(s,x)\in Q_\length$.
\end{proposition}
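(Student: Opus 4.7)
The plan is to mirror the three-step strategy of Lemma~\ref{lem:asymptotics_mueps}, exploiting the crucial simplification that in the present non-oscillating regime the ansatz~\eqref{expansion_inhomogeneous} is a genuine one-scale expansion whose correctors $w_0, w_1, w_2$ all lie in $H^1_0(\omega)$, so no boundary layer correction is required. First, I would verify that $(\mu_\delta^\ast, w_\delta^\ast)$ is an approximate eigenpair: plugging it into~\eqref{cross_sec_perturb} and expanding $\beta_\delta^{-1}$ in powers of $\delta$, the matching of orders $\delta^0$, $\delta^1$, $\delta^2$ reproduces the defining equations of $w_C, \hat{w}, \bar{w}$ together with the Fredholm solvability conditions that fix $\mu_0=\mu_C$, $\mu_1(s)=b\cdot\xi(s)$ via~\eqref{def:b}, and $\mu_2(s)=q_{\rm c}(s)$ via~\eqref{def:qc}. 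The remainder then satisfies
\begin{align*}
\norm{-\diverg(a\beta_\delta\nabla w_\delta^\ast) - \mu_\delta^\ast\beta_\delta w_\delta^\ast}_{H^{-1}(\omega)} \le C\delta^3,
\end{align*}
with $C$ independent of $s$ thanks to the $L^\infty$-bound on $\xi$.

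Second, I would introduce exactly as in Step~2 of the proof of Lemma~\ref{lem:asymptotics_mueps} the compact self-adjoint operator $\tilde{L}_\delta$ on $H^{-1}(\omega)$ defined by solving $-\diverg(a\beta_\delta\nabla w) + \beta_\delta w = f$ and composing with the compact embedding $H^1_0(\omega)\hookrightarrow H^{-1}(\omega)$, endowed with the inner product $(f,g)_\delta^\sim := (L_\delta f, L_\delta g)_\delta$. Its largest eigenvalue equals $(\beta_\delta(\mu_\delta+1))^{-1}$ for $\delta$ small. A uniform spectral gap $\mu_\delta^{(1)}(s) - \mu_\delta(s)\geq\alpha>0$ follows, as in the proof of Lemma~\ref{lem:Lipschitz_continuity}, from the simplicity of $\mu_C$ (Krein-Rutman applied to~\eqref{cross_sec}) together with continuity in $s$ and $\delta$ of the first two eigenvalues. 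Applying Lemma~\ref{lem:VishikLusternik} with $f_\delta^\ast := -\diverg(a\beta_\delta\nabla w_\delta^\ast) + \beta_\delta w_\delta^\ast$ (so that $L_\delta f_\delta^\ast = w_\delta^\ast$) then yields $|\mu_\delta(s)-\mu_\delta^\ast(s)|\leq C\delta^3$ uniformly in $s$, which after dividing by $\delta^2$ is precisely the stated asymptotic formula.

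For the convergence of eigenfunctions, the second conclusion of Lemma~\ref{lem:VishikLusternik} combined with the spectral gap forces the approximating linear combination $\bar f$ to reduce to a scalar multiple of $f_\delta$, so $\norm{f_\delta^\ast - f_\delta}_{H^{-1}(\omega)} = O(\delta^3)$; applying the isomorphism $L_\delta:H^{-1}(\omega)\to H^1_0(\omega)$, whose operator norm is uniformly bounded in $\delta$ and $s$ by ellipticity, one obtains $\norm{w_\delta^\ast - w_\delta}_{H^1_0(\omega)} = O(\delta^3)$. Since $w_\delta^\ast\to w_C$ in $H^1_0(\omega)$ uniformly in $s$ directly from~\eqref{expansion_inhomogeneous}, integration over $s\in(0,\length)$ yields $w_\delta\to w_C$ in $H^1_0(Q_\length)$. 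The main obstacle is the technical bookkeeping required to keep every estimate uniform in $s\in[0,\length]$, but this is substantially milder than in Lemma~\ref{lem:asymptotics_mueps}: the absence of a fast scale removes the need for any boundary-layer analysis in the spirit of Appendix~\ref{appendix_B}, and the correctors belong to $H^1_0(\omega)$ by construction, so the $H^1$-convergence of eigenfunctions is stronger than in the oscillating case.
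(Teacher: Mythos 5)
Your proposal follows the paper's proof essentially verbatim: plug the one-scale ansatz~\eqref{expansion_inhomogeneous} into~\eqref{cross_sec_perturb}, bound the residual by $C\delta^3$ in $H^{-1}(\omega)$ uniformly in $s$, and then replay Steps~2--3 of Lemma~\ref{lem:asymptotics_mueps} (the operator $\tilde L_\delta$, the uniform spectral gap from simplicity of $\mu_C$, and Lemma~\ref{lem:VishikLusternik}) to deduce $|\mu_\delta-\mu_\delta^\ast|\leq C\delta^3$ and $\norm{w_\delta-w_\delta^\ast}_{H^1_0(\omega)}\leq C\delta^3$. Your key simplifying observation --- that $w_\delta^\ast\in H^1_0(\omega)$ so $L_\delta(f_\delta^\ast-f_\delta)=w_\delta^\ast-w_\delta$ and no boundary-layer estimate in the spirit of~\eqref{est:boundary} is needed --- is precisely what the paper points out to explain the improved $O(\delta^3)$ eigenfunction bound.
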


\begin{proof} Since the arguments are similar, and even simpler, to the ones of Proposition~\ref{prop:asymptotics_mu},
we give only a sketch of the proof pointing out the differences. 

Plugging $\mu^\ast_\delta$ and $w^\ast_\delta$ of~\eqref{expansion_inhomogeneous} into \eqref{cross_sec_perturb},
we obtain (here again, we skip the explicit $s$-dependence) that
\begin{align*}
 \diverg(a \beta_\delta\nabla w^\ast_\delta)+ \mu^\ast_\delta\beta_\delta w^\ast_\delta& = 
\delta^3 \,\bigl[- \diverg(a(\xi\cdot x)w_2) + \mu_2 w_1 + \mu_1 w_2 \\
&\qquad\qquad\qquad-(\xi\cdot x)(\mu_1 w_1 + \mu_2 w_0 + \mu_0 w_2)
\bigr]\\
&+\  \delta^4 \,\bigl[\mu_2 w_2 - (\xi\cdot x) (\mu_2 w_1 + \mu_1 w_2)\bigr]\\
&+\  \delta^5\, \bigl[-(\xi\cdot x) \mu_2 w_2\bigr].
\end{align*}
The terms on the right-hand side are all bounded in $H^{-1}(\omega)$, so that we find
\begin{align*}
 \norm{ \diverg(a \beta_\delta\nabla w^\ast_\delta)+ \mu^\ast_\delta\beta_\delta w^\ast_\delta}_{H^{-1}(\omega)}\leq C\delta^3.
\end{align*}
Following the arguments of Lemma~\ref{lem:asymptotics_mueps} yields the estimates
\begin{align}\label{approximation_wdelta*}
\abs{\mu_\delta^\ast - \mu_\delta} \leq C\delta^3 \qquad\text{and}\qquad \norm{w_\delta^\ast - w_\delta}_{H^1_0(\omega)}  \leq C \delta^3
\end{align}
for $\delta>0$ sufficiently small.
Indeed, compared with~\eqref{est:weps_wasteps} and~\eqref{est:boundary}, we get an even better bound on $w_\delta^\ast-w_\delta$, since 
$w_\delta^\ast = 0$ on $\partial \omega$.
For fixed, small $\delta>0$, Lipschitz continuity of $s\mapsto \mu_\delta(s)$ and $s\mapsto w_\delta(s)$ with vanishing Lipschitz constants as $\delta\to 0$ 
can be shown following the proof of Lemma~\ref{lem:Lipschitz_continuity}. Hence, the proposition is proven.
\end{proof}

Depending on the behavior of the coefficient of order $\delta$ in the expansion 
of $\mu_\delta$, that is $(b\cdot \xi)$ (see~\eqref{def:b} for the definition of $b$), one has to distinguish between two different scenarios, 
namely propagation and localization.

\subsection{Propagation}\label{subsec:propagation}

The following theorem provides a sufficient condition for propagation through our tube-shaped waveguide. In the sequel, let
$(\eta_P^{(j)}, \varphi_P^{(j)})$ denote the eigenpair of order $j$ for
\begin{align*}
 -r \varphi'' + q \varphi= \eta \varphi, \qquad \varphi \in H_0^1(0,\length), 
\end{align*}
where $r=\int_\omega a w_C^2 \dd{x}$ and $q= q_\tau + q_{\rm c}$ with
\begin{align*}
q_\tau(s)=\tau(s)^2\int_\omega a\abs{\nabla w_C\cdot R x}^2\dd{x} -\tau'(s) \int_\omega a (\nabla w_C\cdot R x)w_C\dd{x}, \qquad s\in[0,\length],
\end{align*} 
and $q_{\rm c}$ defined in \eqref{def:qc}. Recall that $(\mu_C, w_C)$ is the first eigenpair of the cross section problem~\eqref{cross_sec}.

\begin{theorem}\label{theo:inhomo_propagation}
Let $b$ be given by~\eqref{def:b} and suppose that $(b\cdot \xi)$ is constant. For $j\in \N_0$, let $\{(\lambda_\delta^{(j)}, u_\delta^{(j)})\}_\delta$ 
be the sequence of $j$th eigenpairs for the spectral problem~\eqref{rescaledEVP}-\eqref{EVPepsilondelta} with $\eps=1$.
Then, for every $\delta>0$ one has
\begin{align*}
\lambda_\delta^{(j)}= \frac{\mu_C}{\delta^{2}} +\frac{(b\cdot \xi)}{\delta}+ \eta_{\delta}^{(j)},\qquad\text{where }\lim_{\delta\to 0}\eta^{(j)}_\delta=\eta_P^{(j)},
\end{align*}
and the sequence of eigenfunctions $\{u_\delta^{(j)}\}_\delta$ converges, up to a subsequence, in the following sense:
\begin{align*}
 u_\delta^{(j)}\circ \psi_{\delta}=v_\delta^{(j)}\quad\longrightarrow\quad 
v^{(j)}:=w_C \,\varphi^{(j)}_P \qquad \text{in $L^2(Q_\length)$ as $\delta \to 0$.}
\end{align*} 
Here, $\psi_{\delta}$ is the parameter transformation introduced in~\eqref{rescaling}. 

Conversely, any such $v^{(j)}$ is the $L^2(Q_\length)$-limit of a sequence $\{u_\delta^{(j)}\circ \psi_{\delta}\}_\delta$ with $u_\delta^{(j)}$ 
an eigenfunction of~\eqref{rescaledEVP}-\eqref{EVPepsilondelta} corresponding to the eigenvalue $\lambda_\delta^{(j)}$. 
\end{theorem}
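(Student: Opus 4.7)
The plan is to mimic the $\Gamma$-convergence strategy developed in Section~\ref{sec:case_oscillatingcoefficients} for the homogenization case, but now adapted to the present regime $\eps = 1$, $\delta \to 0$, and with the torsion term $\tau \neq 0$ included. Concretely, I would introduce the rescaled energy $F_\delta : L^2(Q_\length) \to \overline{\R}$ given, for $v \in H^1_0(Q_\length)$, by
\begin{align*}
F_\delta[v] \;:=\; \tilde E_{1,\delta}[v] \;-\; \int_{Q_\length} \beta_\delta \left( \frac{\mu_C}{\delta^2} + \frac{b\cdot\xi}{\delta} \right) \abs{v}^2 \dd{s}\dd{x},
\end{align*}
and then verify that $\{F_\delta\}_\delta$ fulfills the hypotheses of Lemma~\ref{theo:Gammaconvergence} with $H_\delta = L^2(Q_\length)$ endowed with the inner product $(u,v)_\delta := \int_{Q_\length}\beta_\delta u v\dd{s}\dd{x}$, whose constants relative to the standard $L^2$-product tend to $1$. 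The crucial input is that $b\cdot \xi$ is assumed \emph{constant}: this ensures that the subtracted first-order correction is independent of $s$ (up to the $\beta_\delta$ weight) and therefore neither mixes with $\varphi'$ under integration by parts nor produces an $s$-dependent shift of the spectrum that would contradict propagation. The conclusion then follows from Lemma~\ref{theo:Gammaconvergence} applied to the operators $\Bcal_\delta = \Acal_{1,\delta} - \mu_C/\delta^2 - (b\cdot\xi)/\delta$ exactly as at the end of the proof of Theorem~\ref{theo:mainresult_cylinder}.

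The lower bound and compactness (items $(i)$ and $(ii)$ of Lemma~\ref{theo:Gammaconvergence}) are obtained as follows. For the lower bound, I would use the variational characterization of $\mu_\delta(s)$ as first eigenvalue of~\eqref{cross_sec_perturb} to bound the cross-sectional part of $\tilde E_{1,\delta}[v]$ from below by $\int_0^\length \mu_\delta(s)/\delta^2 \cdot \int_\omega \beta_\delta |v|^2 \dd{x}\dd{s}$, and then invoke Proposition~\ref{lem:asymptotics_mu_inhomo} to replace $(\mu_\delta - \mu_C - \delta(b\cdot\xi))/\delta^2$ by the bounded function $q_{\rm c}$ uniformly in $s$; the remaining $L^\infty$-bound on $q_{\rm c}$ together with $\beta_\delta \to 1$ yields $F_\delta[v] \geq -c \norm{v}_{L^2}^2$. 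For compactness of bounded-energy sequences, the same comparison with $\mu_\delta/\delta^2$ provides $L^2$-control of $\nabla^x v_\delta$, while the first summand of $\tilde E_{1,\delta}$ (after noting that $a/\beta_\delta \geq c > 0$) controls $v_\delta' + \tau(\nabla^x v_\delta \cdot Rx)$ in $L^2$; using the already-obtained bound on $\nabla^x v_\delta$, one extracts an $H^1$-bound and hence an $L^2$-strong cluster point $v$. A splitting argument analogous to Step~3 of Proposition~\ref{prop:Gammaconvergence_Heps}, based on this time the simplicity of $\mu_C$ as first eigenvalue of~\eqref{cross_sec}, forces $v(s,x) = w_C(x)\varphi(s)$ with $\varphi \in H^1_0(0,\length)$.

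For the $\Gamma$-convergence itself, the recovery sequence is the natural candidate $v_\delta(s,x) := w_\delta(s,x) \varphi(s)$ where $w_\delta(s)$ is the normalized first eigenfunction of~\eqref{cross_sec_perturb}; Proposition~\ref{lem:asymptotics_mu_inhomo} and the Lipschitz continuity of $s \mapsto w_\delta(s)$ (obtained as in Lemma~\ref{lem:Lipschitz_continuity}) allow to compute $\limsup_{\delta\to 0} F_\delta[v_\delta]$ and to identify it with $\int_0^\length r |\varphi'|^2 + q|\varphi|^2 \dd{s}$. The matching liminf inequality is obtained by writing $v_\delta = w_C \varphi + (v_\delta - w_C\varphi)$ and expanding, using $w_\delta \to w_C$ in $H^1$ and the expansion of $\mu_\delta$ to second order. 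The term $q_{\rm c}$ appears directly from this expansion, while $q_\tau$ is produced by the cross term $\tau(\nabla^x v_\delta \cdot Rx)$ in the first summand of $\tilde E_{1,\delta}$: expanding $\abs{v' + \tau(\nabla^x v \cdot Rx)}^2$ and integrating by parts the mixed term $2 v'\,\tau(\nabla^x v \cdot Rx)$ in the variable $s$ is what produces both $\tau^2$ and $\tau'$ contributions in the definition of $q_\tau$.

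The main obstacle I expect is the bookkeeping associated with the torsion. In particular, the mixed term $2 \int v' \tau (\nabla^x v \cdot Rx) / \beta_\delta \dd{s}\dd{x}$ in $\tilde E_{1,\delta}$ must be handled by a combination of (a) integration by parts in $s$, transferring a derivative from $\tau$ and generating the $-\tau' \int_\omega a(\nabla w_C \cdot Rx)w_C\dd{x}$ contribution to $q_\tau$, and (b) using that the $\delta^{-1}$ scaling of $\nabla^x v_\delta$ is balanced by the smallness of $w_\delta - w_C$ in $H^1$ together with the orthogonality $\int_\omega \hat w \cdot w_C \dd{x} = 0$. A secondary subtle point, absent from the torsion-free homogenization setting, is that the first-order term $(b\cdot\xi)/\delta$ is genuinely of the same formal order as the $\tau \nabla^x v$ contributions, so the hypothesis that $b\cdot\xi$ is constant is exactly what is needed for this subtraction to be compatible with the separation of variables in the limit; without it, the correct ansatz would no longer be of the form $w_C(x)\varphi(s)$ and one would instead be driven to the localization analysis of Section~\ref{subsec:localization}.
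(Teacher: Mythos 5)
Your proposal is correct and is essentially the paper's own proof: the paper introduces exactly your functional $E_\delta[v]=\tilde E_{1,\delta}[v]-\int_{Q_\length}\beta_\delta\,\delta^{-2}\bigl(\mu_C+\delta(b\cdot\xi)\bigr)\abs{v}^2\dd{s}\dd{x}$, asserts its $\Gamma$-convergence to $E_0$ in the strong $L^2(Q_\length)$ topology via Proposition~\ref{lem:asymptotics_mu_inhomo}, and then applies Lemma~\ref{theo:Gammaconvergence} precisely as at the end of Theorem~\ref{theo:mainresult_cylinder}, the constancy of $b\cdot\xi$ being what makes the subtraction a constant spectral shift of $\Acal_{1,\delta}$. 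Your added detail on the torsion term (integration by parts in $s$ of the mixed term, producing the $\tau'$ contribution to $q_\tau$) is the right mechanism; just note that this mixed term carries no $\delta^{-1}$ factor, so the balancing against $\norm{w_\delta-w_C}_{H^1}$ and the orthogonality $\int_\omega \hat w\, w_C\dd{x}=0$ you anticipate needing there are in fact not required.
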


\begin{proof}
With Proposition~\ref{lem:asymptotics_mu_inhomo} at hand the proof follows along the lines of that for Theorem~\ref{theo:mainresult_cylinder},
making use of Lemma~\ref{theo:Gammaconvergence}.
In particular, with $E_\delta, E_0:L^2(Q_\length)\to \overline\R$ given by
  \begin{align*}
    E_\delta [v]= \tilde{E}_{1,\delta}[v] - \int_{Q_\length}  \beta_\delta\frac{(\mu_C+ \delta (b\cdot\xi))}{\delta^2}\abs{v}^2\dd{s}\dd{x}
  \end{align*}
(see~\eqref{changedenergy} for the definition of $\tilde{E}_{1,\delta}$) and  
  \begin{align*}
    E_0[v]=\begin{cases}
    \displaystyle\int_0^\length r\, \abs{\varphi'}^2 +  q \abs{\varphi}^2\dd{s}, & 
    \text{if $v(s,x)=w_C(x) \varphi(s), \ (s,x) \in Q_\length,\ \varphi\in H_0^1(0,\length)$,}\\
    +\infty, & \text{otherwise},
    \end{cases}
  \end{align*}
it can be shown that $\Gamma$-$\displaystyle\lim_{\delta\to 0}E_\delta=E_0$ in the strong topology of $L^2(Q_\length)$.
\end{proof}

\begin{remark}
a) Notice that if $a\equiv 1$, we recover the results of~\cite{BMT07}. In this case, $b=\int_\omega a \nabla w_C w_C \dd{x}$ vanishes identically, 
so that the propagation condition is fulfilled 
with constant zero.

b) If $a$ is not constant, the propagation criterion is closely related to symmetry properties of the cross section. To see this, let $J\in O(2)$ such that $\rank(J-\Ibb)=2$.
If $J\omega=\omega$ and $a(Jx)=a(x)$ for $x\in \omega$, the symmetry relation carries over to $w_C$, and a change of variables gives $b=Jb$, which implies $b=0$. 

As an example, point symmetry of $a$ and $\omega$ regarding the origin corresponds to setting $J=-\Ibb$. 
Hence, for a waveguide with this type of symmetry the propagation condition is satisfied regardless of its curvature and torsion.
\end{remark}

\subsection{Localization of eigenmodes}\label{subsec:localization}
This section addresses the situation when the propagation condition is violated, meaning that $(b\cdot\xi)$ is not constant. 
We observe that localization of eigenmodes takes place around global minimizers of the function $h: s\mapsto (b\cdot \xi(s))$.
Whether these minimum points lie in the interior of $(0,\length)$ or are attained at the endpoints has an influence on the reduced limit problem.

For technical reasons we need to require higher regularity of the function $\xi$ in the following, 
precisely $\xi\in C^2([0,\length];\R^2)$ or $\xi\in C^1([0, \length];\R^2)$ as the case may be. 

\subsubsection{Localization at a single interior point}\label{subsubsec:inner_localization}
Let us assume first that  $s_0\in (0,\length)$ is a global minimizer for $h$. 

The methods used in this paragraph are inspired by those of~\cite{BMT_Robin}, where a localization result for homogeneous
waveguides with Robin boundary conditions is proven. Moreover, localization effects were observed for waveguides with 
varying cross section (see~\cite{FriedlanderSolomyak07, FriedlanderSolomyak09} for a study based on operator techniques).

Notice that in the asymptotic expansion~\eqref{expansion_inhomogeneous} of the cross section problem~\eqref{cross_sec_perturb} 
the $\delta$-order term of the eigenvalue  
$\mu_\delta$, which is $(b\cdot \xi(s))$, now depends on $s$. Thus, a reasoning as in Section~\ref{subsec:propagation} 
cannot be expected to give a satisfactory result. 
In fact,~\eqref{expansion_inhomogeneous} indicates that an appropriate refinement of the scales is needed. 

To this end, we use Taylor expansion to develop $h=(b\cdot \xi)$ around its unique global minimum point $s_0$.
Since $h'(s_0)=0$ and $h''(s_0)>0$, we get
\begin{align}\label{Taylor_expansion}
 h(s) =h(s_0) + \frac{1}{2} h''(\tilde{s})(s-s_0)^2, \qquad s\in [0,\length],  
\end{align}
with $\tilde{s}\in (s_0, s)$ if $s>s_0$ and $\tilde{s}\in (s,s_0)$ if $s<s_0$. 
Then, there exists a constant $K>0$ with
\begin{align}\label{upperlowerbound}
 K (s-s_0)^2\leq h(s) - h(s_0)\leq \frac{1}{K} (s-s_0)^2.
\end{align}

Notice that the energy functional 
\begin{align*}
E^\ast_\delta[v]=\tilde{E}_{1,\delta}[v]-\int_{Q_l} \beta_{\delta}\frac{(\mu_C+\delta h(s_0))}{\delta^2}\abs{v}^2\dd{s}\dd{x} \qquad
\end{align*} 
yields qualitative information regarding the concentration behavior of the system around $s_0$. 
Let $\{v_\delta\}_\delta\subset L^2(Q_l)$ be such that $\norm{v_\delta}_{L^2(Q_l)}\leq C$ and $E^\ast_\delta[v_\delta]\leq C$ for all $\delta>0$ with a constant $C>0$. 
In view of~\eqref{upperlowerbound} and the lower bound 
\begin{align*}
 \int_{Q_l} \frac{\mu_\delta-(\mu_C+\delta h)}{\delta^2}\abs{v}^2\dd{s}\dd{x}\geq -c\norm{v}_{L^2(Q_l)}^2,
\end{align*} 
which in turn is a consequence of Proposition~\ref{lem:asymptotics_mu_inhomo} in combination with~\eqref{upperlowerbound}, we find that
\begin{align*}
C\delta^2&\geq \delta^2 E^\ast_\delta[v_\delta]\geq  \int_{Q_l} a\beta_\delta \abs{\nabla v_\delta}^2 -\beta_\delta(\mu_C+\delta h)\abs{v_\delta}^2
+ \delta \beta_\delta K (s-s_0)^2\abs{v_\delta}^2 \dd{s}\dd{x} \\ &\geq -c\delta^2 \norm{v_\delta}_{L^2(Q_l)}^2 
+   \delta K \int_{Q_l} \beta_\delta (s-s_0)^2\abs{v_\delta}^2 \dd{s}\dd{x}.
\end{align*}
Using the uniform bound on the $L^2$-norm of $\{v_\delta\}_\delta$, one obtains the estimate
\begin{align*}
\int_{Q_l}\beta_\delta (s-s_0)^2  \abs{v_\delta}^2\dd{s}\dd{x}\leq c\delta.
\end{align*}
Due to the uniform convergence of $\beta_\delta$ to $1$ this leads to
\begin{align*}
\int_{Q_l} (s-s_0)^2 v_\delta^2\dd{s}\dd{x}\to 0
\end{align*}
as $\delta\to 0$, which implies that $\{v_\delta^2\}_\delta$ converges weakly, up to a subsequence, to a measure supported in $\{s_0\}\times\omega$.
Considering a sequence $\{v_\delta\}_\delta$ as above, but with $L^2$-norm bounded away from zero, for instance $\norm{v_\delta}_{L^2(Q_l)}=1$, 
this shows that a concentration effect 
around $s_0$ is produced. 
To obtain maximal quantitative information about the behavior of the system locally around $s_0$, we have to use a blow-up 
argument and an appropriately rescaled version of $\{E^\ast_\delta\}_\delta$.

Following~\cite[Remark~4.3]{BMT07}, we perform a change of variables by replacing $s\in [0,\length]$ with
\begin{align}\label{rescalings1}
 t=\delta^{-1/4}(s-s_0), \qquad t\in I_\delta:=\delta^{-1/4}[-s_0, \length-s_0],
\end{align}
and we set
\begin{align}\label{rescalingz}
 z(t,x)=\delta^{1/8}v(\delta^{1/4}t + s_0, x)=\delta^{1/8}v(s,x), \qquad t\in I_\delta,\ s\in [0,\length],\ x\in \omega.
\end{align}
This definition of $z$ preserves the $L^2$-norm, i.e.\ $\norm{z}_{L^2(I_\delta\times \omega)}= \norm{v}_{L^2(Q_\length)}$.
In our notation, the quantities transformed according to~\eqref{rescalings1} are marked with bars. 
For instance, we use $\bar{\beta}_\delta(t)=\beta_\delta(s)$, $\bar{\xi}(t)=\xi(s)$, $\bar{\theta}(t)=\theta(s)$ and 
$\bar{h}(t)=h(s)$.
Notice that due to~\eqref{rescalings1}, also $\bar{\xi}$, $\bar{\theta}$ and $\bar{h}$ depend on $\delta$.

Then the rescaled cross section problem for fixed $t\in I_\delta$ reads
\begin{align}\label{cross_sec_perturb_loca}
 -\diverg(a \bar{\beta}_\delta(t)\nabla \bar{w})=\bar{\mu}\bar{\beta}_\delta(t) \bar{w}, \qquad \bar{w}\in H^1_0(\omega).  
\end{align}
For the first eigenpair $(\bar{\mu}_\delta(t), \bar{w}_\delta(t))$, one has $\bar{\mu}_\delta(t)=\mu_\delta(s)$ and $\bar{w}_\delta(t)=w_\delta(s)$ 
for $t$ and $s$ selected as in \eqref{rescalings1} recalling that $(\mu_\delta(s), w_\delta(s))$ is the first eigenpair of~\eqref{cross_sec_perturb}.
As in Sections~\ref{subsec:asymptotics} and~\ref{subsec:asymptotics_inhomogeneous} a detailed analysis of the cross section problem, 
in particular of its behavior as $\delta$ tends to $0$ is essential.

\begin{lemma}\label{lem:asymptotics_mu_loc1}
There exists a constant $c>0$ such that 
\begin{align}\label{lowerbound}
 \frac{\bar{\mu}_\delta(t)-(\mu_C +\delta h(s_0))}{\delta^{3/2}}\geq Kt^2-c\delta^{1/2}
\end{align}
for all $\delta>0$ and $t\in I_\delta$.
For any $I\subset \R$ compact, the following convergence holds uniformly in $I$:
\begin{align*}
   \lim_{\delta\to 0}\frac{\bar{\mu}_\delta(t)-(\mu_C +\delta h(s_0))}{\delta^{3/2}}= q(t),
\end{align*} 
with $q(t):=\frac{1}{2}h''(s_0)t^2$ for $t\in \R$. Moreover, $\bar{w}_\delta \to w_C$ in $H^1(I \times \omega)$ as $\delta\to 0$, where $\bar{w}(t,x)=\bar{w}(t)(x)$ for
$(t,x)\in I\times \omega$.
\end{lemma}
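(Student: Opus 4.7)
The plan is to deduce all three assertions from a second-order expansion of $\mu_\delta(s)$ (inherited from Proposition~\ref{lem:asymptotics_mu_inhomo}) combined with Taylor expansion of $h=b\cdot\xi$ around its minimum $s_0$, carried out in the blow-up variable $t = \delta^{-1/4}(s-s_0)$.

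First, I would extract from the proof of Proposition~\ref{lem:asymptotics_mu_inhomo}, specifically from the estimate \eqref{approximation_wdelta*} and the explicit form of $\mu^\ast_\delta$ in \eqref{expansion_inhomogeneous}, the uniform-in-$s$ expansion
\begin{align*}
\mu_\delta(s) = \mu_C + \delta h(s) + \delta^2 q_{\rm c}(s) + O(\delta^3).
\end{align*}
Writing $s=s_0+\delta^{1/4}t$, this becomes
\begin{align*}
\bar\mu_\delta(t) - (\mu_C + \delta h(s_0)) = \delta[h(s)-h(s_0)] + \delta^2 q_{\rm c}(s) + O(\delta^3).
\end{align*}
Since $\xi\in C^2$ implies $q_{\rm c}$ is bounded on $[0,\length]$, the last two terms are $O(\delta^2)$.

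For the lower bound \eqref{lowerbound}, I would simply apply \eqref{upperlowerbound} to get $h(s) - h(s_0) \geq K(s-s_0)^2 = K\delta^{1/2}t^2$; dividing the previous display by $\delta^{3/2}$ then yields the stated inequality with $c$ controlling the $\delta^2$ contributions. For the uniform convergence on a compact set $I\subset\R$, I would use that $h\in C^2$ with $h'(s_0)=0$, so Taylor's theorem gives $h(s)-h(s_0) = \tfrac12 h''(s_0)(s-s_0)^2 + o((s-s_0)^2)$ as $s\to s_0$. On $I$, the quantity $|s-s_0|=\delta^{1/4}|t|$ is uniformly small, hence $\delta[h(s)-h(s_0)] = \tfrac12 h''(s_0)\delta^{3/2}t^2 + o(\delta^{3/2})$ uniformly in $t\in I$. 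Dividing by $\delta^{3/2}$ produces the stated limit $q(t)$, and the remainder terms $\delta^{1/2}q_{\rm c}(s)+O(\delta^{3/2})$ vanish uniformly.

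For the eigenfunction convergence, I would first combine the explicit expansion \eqref{expansion_inhomogeneous} with the error bound \eqref{approximation_wdelta*} to obtain $\norm{w_\delta(s)-w_C}_{H^1(\omega)} = O(\delta)$ uniformly in $s$; by the change of variables $t=\delta^{-1/4}(s-s_0)$, this immediately controls $\norm{\nabla^x\bar w_\delta - \nabla^x w_C}_{L^2(I\times\omega)}$ and $\norm{\bar w_\delta - w_C}_{L^2(I\times\omega)}$. For the $\partial_t$-component, I would adapt the argument of Lemma~\ref{lem:Lipschitz_continuity} to the present inhomogeneous setting: the key input $|\beta_\delta(s)-\beta_\delta(\bar s)|\le C\delta|s-\bar s|$ propagates through Steps~1--3 of that proof to give Lipschitz continuity of $s\mapsto w_\delta(s)$ into $L^2(\omega)$ with constant $C\sqrt\delta$; via the chain rule and the factor $\delta^{1/4}$ from $\partial_t = \delta^{1/4}\partial_s$, this yields $\norm{\partial_t\bar w_\delta}_{L^2(I\times\omega)}^2 \leq C|I|\delta^{3/2}\to 0$.

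The main obstacle is the delicacy of the scaling: the exponent $\delta^{-1/4}$ is forced by matching the leading curvature term $\delta(s-s_0)^2$ with the second-order correction $\delta^2 q_{\rm c}$, so the harmonic-oscillator potential appears at the unusual order $\delta^{3/2}$. Uniformity of the Taylor remainder in $t\in I$ truly requires $\xi\in C^2$ (so that $h''$ is continuous at $s_0$), and the $H^1$-convergence of $\bar w_\delta$ relies on the refined $\sqrt\delta$-Lipschitz estimate for $s\mapsto w_\delta(s)$ obtained by transplanting Lemma~\ref{lem:Lipschitz_continuity} to the non-oscillating case.
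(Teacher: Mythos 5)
Your proposal is correct and follows essentially the same route as the paper: the lower bound via \eqref{approximation_wdelta*} and \eqref{upperlowerbound} in the blow-up variable, the uniform limit via the Lagrange-form Taylor expansion of $h$ with uniform continuity of $h''$, and the $H^1(I\times\omega)$-convergence of $\bar w_\delta$ from the uniform cross-section estimate combined with the $\sqrt{\delta}$-Lipschitz continuity of $s\mapsto w_\delta(s)$ (hence $C\delta^{3/4}$ in $t$) transplanted from Lemma~\ref{lem:Lipschitz_continuity}.
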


\begin{proof} 
The estimate \eqref{lowerbound} for $\delta>0$ and $t\in I_\delta$ follows from
\begin{align*}
 \bar{\mu}_\delta(t)-(\mu_C+\delta h(s_0)) &= \bigl(\mu_\delta(s)-\mu_C-\delta h(s)\bigr) + \delta (h(s)-h(s_0)) \\
&\geq -c\delta^2 
+ \delta K(s-s_0)^2=-c\delta^2 + \delta^{3/2} K t^2,
\end{align*}
where we used \eqref{approximation_wdelta*}, \eqref{upperlowerbound} and \eqref{rescalings1}.
Now let $t\in I$. Then, by \eqref{Taylor_expansion}, Proposition~\ref{lem:asymptotics_mu_inhomo} and the regularity of $\xi$, 
\begin{align*}
\absB{\frac{\bar{\mu}_\delta(t)-(\mu_C +\delta h(s_0))}{\delta^{3/2}}- q(t)} 
& \leq \absB{\frac{\mu_\delta(s)-(\mu_C+\delta h(s))}{\delta^{3/2}}} + \frac{1}{2}\absB{h''(\tilde{s})\frac{(s-s_0)^2}{\delta^{1/2}}-h''(s_0)t^2}\\
&\leq C \delta^{1/2} + \frac{t^2}{2} \abs{h''(\tilde{s})-h''(s_0)}
\end{align*}
with $s=\delta^{1/4}t+s_0$ and $\tilde{s}$ between $s$ and $s_0$.
The uniform continuity of $h''$ on $[0,\length]$ together with $\abs{\tilde{s}-s_0}\leq\delta^{1/4}t\leq C\delta^{1/4}$ implies 
the second part of the assertion.

To prove convergence of the first eigenfunctions of~\eqref{cross_sec_perturb_loca}, we use 
\begin{align*}
 \lim_{\delta\to 0}\norm{\bar{w}_\delta(t)- w_C}_{H^1_0(\omega)} =0,
\end{align*}
which holds uniformly in $I$ and is an immediate consequence 
of Proposition~\ref{lem:asymptotics_mu_inhomo}, in conjunction with the Lipschitz continuity of the map $t\mapsto \bar{w}_\delta(t)$ with 
Lipschitz constant $C\delta^{3/4}$. The latter follows from Lemma~\ref{lem:Lipschitz_continuity} accounting for~\eqref{rescalings1}. 
\end{proof}

In the following, we identify $H_0^1( I_\delta\times\omega)$-functions with 
elements in $H^1(\R\times\omega)$ through a trivial extension by zero, and we define for an open set $U\subset\R$ and $p\geq 1$,
\begin{align}\label{def:L2t}
 L^2(U;t^p\dd{t}):=\Bigl\{\phi:U\to \R \text{ measurable}\,:\, \int_\R \abs{t}^p \abs{\phi(t)}^2\dd{t}<\infty\Bigr\}.
\end{align}
The next theorem gives the spectral asymptotics of \eqref{rescaledEVP}-\eqref{EVPepsilondelta} within the setting of this paragraph, showing effective harmonic 
oscillations in the neighborhood of the unique minimum point of $h$.
Let ($\eta_{L}^{(j)}, \phi_{L}^{(j)})$ be the $j$th eigenpair of 
the one-dimensional harmonic oscillator defined by
\begin{align}\label{harmonicoscillator}
 -r\ddot{\phi} + q \phi= \eta \phi, \qquad\text{$\phi \in H^1(\R)\cap L^2(\R;t^2\dd{t})$,}
\end{align}
which is to be understood in the weak sense (see~\cite[Chapter~VIII, Section~7.4]{DautrayLions90}).
Here, $r=\int_\omega a w_C^2\dd{x}$ and $q(t)=\frac{1}{2}h''(s_0)t^2$ for $t\in \R$. Note that $\dot{(\cdot)}= \frac{d}{dt}$.
\begin{theorem}\label{theo:localization}
Suppose that $\xi\in C^2([0,\length];\R^2)$ and $h$ attains its global minimum at the unique point $s_0\in (0,\length)$. 
For $j\in \N_0$, let $\{(\lambda_\delta^{(j)}, u_\delta^{(j)})\}_\delta$ 
be a sequence of $j$th eigenpairs for the spectral problem~\eqref{rescaledEVP}-\eqref{EVPepsilondelta} with $\eps=1$.
Then, for every $\delta>0$ one has
\begin{align*}
\lambda_\delta^{(j)}= \frac{\mu_C}{\delta^{2}} +\frac{h(s_0)}{\delta}+ \frac{\eta_{\delta}^{(j)}}{\delta^{1/2}},
\qquad\text{where }\lim_{\delta\to 0}\eta^{(j)}_\delta=\eta_L^{(j)},
\end{align*}
and the sequence of eigenfunctions $\{u_\delta^{(j)}\}_\delta$ converges, up to a subsequence, in the following sense:
\begin{align*}
 \delta^{1/8}u_\delta^{(j)}\circ \psi_{\delta}\circ \varsigma_{\delta, s_0}=z_\delta^{(j)}\quad\longrightarrow\quad 
z^{(j)}:=w_C \,\phi^{(j)}_L \qquad \text{in $L^2(\R\times\omega)$ as $\delta \to 0$.}
\end{align*} 
Here, $\psi_{\delta}$ is defined in~\eqref{rescaling} and 
$\varsigma_{\delta, s_0}:I_\delta\times \omega \to Q_\length$ is the change of variables defined by 
$\varsigma_{\delta, s_0}(t,x)=(\delta^{1/4}t+s_0,x)$.

Conversely, any such $z^{(j)}$ is the $L^2(\R\times \omega)$-limit of a sequence 
$\{\delta^{1/8}u_\delta^{(j)}\circ \psi_{\delta}\circ\varsigma_{\delta, s_0}\}_\delta$, where
$u_\delta^{(j)}$ is
an eigenfunction of~\eqref{rescaledEVP}-\eqref{EVPepsilondelta} with $\eps=1$ corresponding to the eigenvalue $\lambda_\delta^{(j)}$. 
\end{theorem}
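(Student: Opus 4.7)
The plan is to follow the $\Gamma$-convergence strategy used for Theorem~\ref{theo:inhomo_propagation}, but adapted to the blow-up scale $\delta^{1/4}$ and the unbounded limit domain $\R\times\omega$. Specifically, I would apply Lemma~\ref{theo:Gammaconvergence} to a suitable family of rescaled energy functionals on $L^2(\R\times\omega)$, and then use the one-to-one correspondence between non-negative lower semicontinuous quadratic forms and positive self-adjoint operators to identify the spectral limit with the harmonic oscillator~\eqref{harmonicoscillator}.

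For $v\in H_0^1(Q_\length)$ and $z(t,x)=\delta^{1/8}v(\delta^{1/4}t+s_0,x)$ as in~\eqref{rescalings1}--\eqref{rescalingz}, define on $L^2(\R\times\omega)$
\begin{align*}
E_\delta[z]:=\delta^{1/2}\left(\tilde{E}_{1,\delta}[v]-\int_{Q_\length}\beta_\delta\frac{\mu_C+\delta h(s_0)}{\delta^2}\abs{v}^2\dd{s}\dd{x}\right)
\end{align*}
whenever $z\in H_0^1(I_\delta\times\omega)$ (trivially extended to $\R\times\omega$), and $E_\delta[z]=+\infty$ otherwise. The identity $\lambda_\delta-(\mu_C+\delta h(s_0))/\delta^2=\eta_\delta/\delta^{1/2}$ ensures that $E_\delta[z_\delta]/\norm{z_\delta}_{L^2}^2=\eta_\delta$ along eigenfunctions, so $E_\delta$ captures exactly the desired order. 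The candidate $\Gamma$-limit $E_0\colon L^2(\R\times\omega)\to\overline{\R}$ is
\begin{align*}
E_0[z]:=\begin{cases}\displaystyle\int_\R r\abs{\dot\varphi}^2+q\abs{\varphi}^2\dd{t},&\text{if }z(t,x)=w_C(x)\varphi(t),\ \varphi\in H^1(\R)\cap L^2(\R;t^2\dd{t}),\\+\infty,&\text{otherwise.}\end{cases}
\end{align*}

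For the lower bound, I combine the variational characterization $\int_\omega a\beta_\delta\abs{\nabla^x v}^2\dd{x}\geq\mu_\delta(s)\int_\omega\beta_\delta\abs{v}^2\dd{x}$ with the uniform estimate~\eqref{lowerbound} of Lemma~\ref{lem:asymptotics_mu_loc1}, which after rescaling produces a confining contribution $K\int_{\R\times\omega}t^2\abs{z}^2\dd{t}\dd{x}$. This immediately supplies condition~$(i)$ of Lemma~\ref{theo:Gammaconvergence} and, whenever $E_\delta[z_\delta]\leq C$, a uniform bound
\begin{align*}
\int_{\R\times\omega}\abs{\nabla^x z_\delta}^2+\delta^{-1/2}\abs{\dot z_\delta}^2+t^2\abs{z_\delta}^2\dd{t}\dd{x}\leq C.
\end{align*}
The weighted integrability yields tightness at infinity, Rellich--Kondrachov on slabs $[-N,N]\times\omega$ gives local $L^2$-compactness, and a diagonal argument produces a subsequence converging strongly in $L^2(\R\times\omega)$, proving condition~$(ii)$. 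Arguing as in Step~3 of the proof of Proposition~\ref{prop:Gammaconvergence_Heps}, with Proposition~\ref{lem:asymptotics_mu_inhomo} replacing the classical homogenization step, forces any limit to have the form $z=w_C\varphi$, and the control on $\delta^{-1/2}\norm{\dot z_\delta}_{L^2}^2$ combined with lower semicontinuity promotes $\varphi$ to $H^1(\R)$, while the tightness bound gives $\varphi\in L^2(\R;t^2\dd{t})$. The liminf-inequality then follows by the decomposition $z_\delta=w_C\varphi+(z_\delta-w_C\varphi)$ and the uniform convergence of Lemma~\ref{lem:asymptotics_mu_loc1}, which identifies $q(t)=\tfrac12 h''(s_0)t^2$ as the correct limit coefficient. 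For the recovery sequence, given $z=w_C\varphi$ with $\varphi\in H^1(\R)\cap L^2(\R;t^2\dd{t})$, I set $z_\delta(t,x):=\bar w_\delta(t,x)\varphi(t)\chi_\delta(t)$, where $\bar w_\delta$ is the first eigenfunction of~\eqref{cross_sec_perturb_loca} and $\chi_\delta$ is a smooth cut-off supported in $I_\delta$ equal to one on a slightly smaller interval. The Lipschitz dependence of $\bar w_\delta$ on $t$ with constant $C\delta^{3/4}$ (inherited from Lemma~\ref{lem:Lipschitz_continuity} after rescaling) and Lemma~\ref{lem:asymptotics_mu_loc1} yield $z_\delta\to z$ in $L^2(\R\times\omega)$ and $\limsup E_\delta[z_\delta]\leq E_0[z]$; the truncation error is absorbed by dominated convergence using $\varphi\in L^2(\R;t^2\dd{t})$. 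Lemma~\ref{theo:Gammaconvergence} then identifies $E_0$ with the operator of~\eqref{harmonicoscillator}, whose spectrum is the discrete sequence $\{\eta_L^{(j)}\}_j$, delivering both directions of the convergence of eigenpairs.

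The principal obstacle is the compactness step on the unbounded domain $\R\times\omega$: one must leverage the quadratic confinement produced by the second-order Taylor expansion of $h$ at $s_0$ (as quantified by~\eqref{lowerbound}) to prevent mass from escaping to infinity in $t$, while the precise scaling of the longitudinal derivative must be tracked so as to recover exactly $H^1(\R)$-regularity in the limit rather than something weaker. A secondary subtlety lies in constructing recovery sequences that vanish on $\partial(I_\delta\times\omega)$ without destroying the weighted integrability $\varphi\in L^2(\R;t^2\dd{t})$; the cut-off $\chi_\delta$ handles this, its error being controllable thanks to the quadratic weight on $\varphi$.
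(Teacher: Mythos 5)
Your proposal follows essentially the same route as the paper's proof: the same blow-up scaling $t=\delta^{-1/4}(s-s_0)$, $z=\delta^{1/8}v$, the same rescaled functionals $\delta^{1/2}E_\delta$, the lower bound \eqref{lowerbound} of Lemma~\ref{lem:asymptotics_mu_loc1} to produce the confining weight $Kt^2$, compactness via the tail estimate $\int_{\{|t|\ge\rho\}\times\omega}|z_\delta|^2\le C/\rho^2$ combined with Rellich--Kondrachov on bounded slabs, and finally Lemma~\ref{theo:Gammaconvergence} together with the rescaling of eigenvalues to transfer the spectral convergence back to $\lambda_\delta^{(j)}$. One correction to your bookkeeping: after multiplying by $\delta^{1/2}$ the longitudinal term of the rescaled energy is $\int \frac{a}{\bar\beta_\delta}\absb{\dot z+\delta^{1/4}\bar\tau(\nabla^x z\cdot Rx)}^2\dd{t}\dd{x}$, so bounded energy yields only $\norm{\dot z_\delta}_{L^2}^2\le C$, not $\delta^{-1/2}\norm{\dot z_\delta}_{L^2}^2\le C$; the latter would force $\dot z=0$ in the limit (contradicting nonconstant $\varphi$, e.g.\ along any recovery sequence). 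This slip is harmless, since the correct order-one bound is exactly what is needed to get weak $H^1$ compactness and, by lower semicontinuity, $\varphi\in H^1(\R)$, while the quadratic weight gives $\varphi\in L^2(\R;t^2\dd{t})$, as in the paper.
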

\begin{proof}
Consider for $\delta>0$ the functionals $E_\delta: L^2(Q_\length)\to \overline{\R}$ given by 
\begin{align}\label{Edeltav}
 E_\delta[v] = \tilde{E}_{1,\delta}[v] - 
\int_{Q_\length} \beta_\delta\frac{(\mu_C+ \delta h(s_0))}{\delta^2} \abs{v}^2 \dd{s}\dd{x}
\end{align}
for $v\in H_0^1(Q_\length)$ and $E_\delta[v]= + \infty$, otherwise.
Using the change of variables~\eqref{rescalings1} and~\eqref{rescalingz}, we may define a new family of rescaled functionals $\{\bar{E}_\delta\}_\delta$ by setting
\begin{align*}
 \bar{E}_\delta[z]=\delta^{1/2}E_\delta[v]
\end{align*} 
for $\delta>0$. 
Precisely, this yields that
$\bar{E}_\delta:L^2(\R\times\omega)\to \overline{\R}$ is finite for $z\in H_0^1(I_\delta\times\omega)$ with 
\begin{align*}
 \bar{E}_\delta[z] =  \int_{I_\delta\times \omega} \frac{a}{\bar{\beta}_\delta} \abs{\dot{z} +\delta^{1/4}\bar{\tau}(\nabla^x z\cdot Rx)}^2+ 
\frac{\bar{\beta}_\delta}{\delta^{3/2}} \bigl(a\abs{\nabla^x z}^2 - (\mu_C+ \delta h(s_0))\abs{z}^2\bigr)\dd{t}\dd{x}.
\end{align*}
The motivation for choosing $\delta^{1/2}$ as the scaling factor for
$E_\delta$ results from the need of balancing terms in order to obtain control over the cross section part
of $\bar{E}_\delta$ (see Lemma~\ref{lem:asymptotics_mu_loc1}).
Let $\bar{E}_0: L^2(\R\times\omega)\to \overline{\R}$ be given through
\begin{align*}
\bar{E}_0[z]=\begin{cases}
\displaystyle  \int_\R r\abs{\dot{\phi}}^2 +  q \abs{\phi}^2\dd{t}, &
 \text{if $z(t,x)=w_C(x) \phi(t),\ (t,x)\in \R\times \omega,$} \\ & \qquad\phi\in H^1(\R)\cap L^2(\R;t^2\dd{t}),\\
+\infty, & \text{otherwise}.
 \end{cases}
\end{align*}
The proof will proceed in two steps. 
In the first step we establish that $\{\bar{E}_\delta\}_\delta$ satisfies the 
conditions of Lemma~\ref{theo:Gammaconvergence}, while in the second we apply the latter to obtain the stated asymptotic behavior of the spectrum.

\textit{Step~1: Verifying conditions $(i)$-$(iii)$ of Lemma~\ref{theo:Gammaconvergence} for $\{\bar{E}_\delta\}_\delta$.} 
The proof of the $\Gamma$-convergence result 
\begin{align}\label{Gammalimit_Ebar}
 \Gamma\text{-}\lim_{\delta \to 0}\bar{E}_\delta=\bar{E}_0
\end{align} with respect to the strong topology of $L^2(\R\times \omega)$
is mostly analogous to that of Proposition~\ref{prop:Gammaconvergence_Heps}, which implies $(iii)$ and, at the same time, also $(i)$.
When proving compactness of bounded energy sequences, i.e.~$(ii)$, however, a finer argument inspired by~\cite[Proof of Proposition~4.4]{BMT_Robin} is necessary. 

Let $\{z_\delta\}_\delta$ be a bounded sequence in $L^2(\R\times\omega)$ such that 
$\sup_{\delta>0} {\bar{E}_\delta[z_\delta]}\leq C<\infty$. Then, $z_{\delta}\in H_0^1( I_\delta\times\omega)$ and 
$\norm{z_\delta}_{H^1(\R\times\omega)}\leq C$ for all $\delta>0$, so that we can extract a subsequence of $\{z_\delta\}_\delta$ satisfying
\begin{align}\label{weakconvergencezdelta}
 z_\delta\weakly z\qquad \text{in $H^1(\R\times\omega)$}
\end{align}
for some $z\in H^1(\R\times\omega)$ as $\delta\to 0$.
Since the Rellich-Krondrachov theorem is not valid in unbounded domains, in order to derive strong 
$L^2$-convergence of $\{z_\delta\}_\delta$ 
we argue as follows. 

The lower bound \eqref{lowerbound} of Lemma~\ref{lem:asymptotics_mu_loc1} implies that
\begin{align*}
 \bar{E}_\delta[z]\geq \int_{I_\delta\times \omega} \bar{\beta}_\delta(Kt^2-c\delta^{1/2})\abs{z}^2\dd{t}\dd{x}
\end{align*}
for any $z\in H_0^1(I_\delta\times \omega)$, so that, for all $\delta>0$,
\begin{align}\label{unifrombound_loca}
\int_{\R\times\omega} t^2 \abs{z_\delta}^2\dd{t}\dd{x}\leq C
\end{align}
with $C>0$ independent of $\delta$.
For given $\eps>0$, let $\rho>0$ be such that $C/\rho^2< \eps/5$ with the constant $C$ as in \eqref{unifrombound_loca} and 
$\int_{ \{\abs{t}\geq \rho\}\times\omega}\abs{z}^2 \dd{t}\dd{x}< \eps/5$.
Then, for $\delta$ sufficiently small,
\begin{align*}
 \norm{z_\delta-z}_{L^2(\R\times\omega)}^2\leq \int_{ \{\abs{t}< \rho\}\times\omega} \abs{z_\delta-z}^2 \dd{t}\dd{x} 
+ 2\int_{\{\abs{t}\geq \rho\}\times \omega} \abs{z_\delta}^2 \dd{t} \dd{x}+ 2 \eps/5 < \eps. 
\end{align*}
Indeed, in view of~\eqref{weakconvergencezdelta}, applying the Rellich-Krondrachov theorem to the first term 
yields the existence of a $\delta_0>0$ such that 
$\int_{ \{\abs{t}< \rho\}\times\omega} \abs{z_\delta-z}^2 \dd{t}\dd{x} <\eps/5$ for all $\delta<\delta_0$. For the second term we used
\eqref{unifrombound_loca} to derive
\begin{align*}
 \int_{\{\abs{t}\geq \rho\}\times\omega}\abs{z_\delta}^2 \dd{t}\dd{x}\leq \frac{1}{\rho^2} \int_{ \{\abs{t}\geq \rho\}\times\omega}t^2 \abs{z_\delta}^2 \dd{t}\dd{x}
\leq \frac{C}{\rho^2}<\eps/5.
\end{align*}
Since $\eps>0$ was arbitrary, $z_\delta \to z$ in $L^2(\R\times\omega)$ for $\delta\to 0$.
Besides, the lower semicontinuity of the $L^2$-norm in conjunction with~\eqref{unifrombound_loca} entails that $\norm{z}_{L^2(\omega)}\in L^2(\R;t^2\dd{t})$.
This concludes the proof of compactness for bounded energy sequences.

\textit{Step~2: Applying Lemma~\ref{theo:Gammaconvergence} to $\{\bar{E}_\delta\}_\delta$.}
For this purpose, we define operators
$\bar{\Bcal}_\delta: L^2(\R\times\omega)\to L^2(\R\times\omega)$ with $\Dcal(\bar{\Bcal}_\delta)= H_0^1(I_\delta\times\omega)$ such that
\begin{align*}
 \bar{E}_\delta[z]=(\bar{\Bcal}_\delta z, z)_{L^2(I_\delta\times\omega)}, \qquad z\in \Dcal(\bar{\Bcal}_\delta).
\end{align*}
Then, as $\delta$ tends to zero, Lemma~\ref{theo:Gammaconvergence} implies convergence of eigenpairs of $\bar{\Bcal}_\delta$ to those
of the limit operator $\bar{\Bcal}_0$, which, in view of~\eqref{Gammalimit_Ebar}, reads 
$\bar{\Bcal}_0:L^2(\R\times \omega)\to L^2(\R\times \omega)$,
\begin{align*}
\bar{\Bcal}_0 z= (-r \ddot{\phi}+ q \phi)w_C, \qquad \text{$z\in \Dcal(\bar{\Bcal}_0)$,}   
\end{align*}
with $\Dcal(\bar{\Bcal}_0)=\{z\in L^2(\R\times \omega): z=w_C\phi,\, \phi\in H^1(\R)\cap L^2(\R;t^2\dd{t})\}$.
The proof is concluded by the observation that $(\sigma_\delta, z_\delta)$ is an eigenpair for $\bar{\Bcal}_\delta$ if and only if 
$(\delta^{-1/2}\sigma_\delta, v_\delta)$
is an eigenpair for $\Bcal_\delta:=\Acal_{1,\delta} - \mu_C\delta^{-2} - h(s_0)\delta^{-1}$
(see~\eqref{EVPepsilondelta} for the definition of $\Acal_{1, \delta}$), and $z_\delta$ is related with $v_\delta$ through~\eqref{rescalingz}, i.e.~$z_\delta=\delta^{1/8} v_\delta\circ \varsigma_{\delta, s_0}$.
\end{proof}

\subsubsection{Localization at multiple interior points}\label{subsubsec:multiple_localization}
Next we deal with the case of $h=(b\cdot \xi)$ having a global minimum in $(0,\length)$ attained at exactly 
two distinct points in $(0,l)$. 
The results of this paragraph can be easily generalized to the situation
where $h$ has an arbitrary finite number of global minimizers.

Let $s_1, s_2\in (0,l)$ with $s_1<s_2$ be two minimum points of $h$ such that $h(s_1)=h(s_2)=:h_0$, $h'(s_1)=h'(s_2)=0$ and $h''(s_1), h''(s_2)>0$.
In view of the smooth two-well structure of $h$ there is a constant $K>0$ such that
\begin{align*}
 K\min\{(s-s_1)^2, (s-s_2)^2\}\leq h(s)-h_0\leq \frac{1}{K} \min\{(s-s_1)^2, (s-s_2)^2\}
\end{align*}
for all $s\in[0,\length]$.
Using Taylor expansion around the points $s_1$ and $s_2$ one finds the following two representations of $h$,
\begin{align*}
 h(s) =h_0 + \frac{1}{2} h''(\tilde{s}_1)(s-s_1)^2 \qquad\text{and}\qquad
 h(s) =h_0 + \frac{1}{2} h''(\tilde{s}_2)(s-s_2)^2,\qquad s\in [0,\length],  
\end{align*}
with $\tilde{s}_i\in (s_i, s)$ if $s>s_i$ and $\tilde{s}_i\in (s,s_i)$ if $s<s_i$, $i=1,2$. 

In this case we have to deal with concentration in both points $s_1$ and $s_2$ and localize around each of them.
Considering the family of rescaled functionals $\{\delta^{1/2}E_\delta\}_\delta$ (see~\eqref{Edeltav} for the definition of $E_\delta$), 
the strategy is to split $[0,l]$ into two intervals
$J^1=[0, \hat{s}]$ and $J^2=[\hat{s}, \length]$ with $s_1<\hat{s}<s_2$, and to proceed with appropriate changes of variables in each of them, namely 
\begin{align*}
t &= \delta^{-1/4}(s-s_1), \qquad  t\in I_\delta^1=:[l^1_\delta, \hat{l}_\delta^{\,1}], \ s\in J^1,\\
t &= \delta^{-1/4}(s-s_2), \qquad  t\in I_\delta^2=:[\hat{l}^{\,2}_\delta, l_\delta^{2}], \ s\in J^2.
 \end{align*}
For $i=1,2$ and $v\in H_0^1(Q_\length)$, let 
\begin{align}\label{rescaling_doublewell}
 z_i(t,x)=\delta^{1/8} v(\delta^{1/4}t + s_i,x)=\delta^{1/8}v(s,x), \qquad t\in I_\delta^i,\ s\in J^i, \ x\in \omega.
\end{align}
Then, $z=(z_1, z_2)$ lies in the space
\begin{align*}
\Zcal_\delta &=\bigl\{z:=(z_1,z_2) \in H^1(I_\delta^1\times \omega)\times H^1(I_\delta^2\times \omega)\ : \ z_i=0 
\text{ on $\partial\bigl((I_\delta^i\setminus\{\hat{l}_\delta^{\,i}\})\times \omega\bigr)$,} \\
&\qquad\qquad\qquad\qquad\qquad\qquad\qquad\qquad\qquad\qquad\qquad\qquad \ 
z_1|_{\{\hat{l}_\delta^{\, 1}\}\times \omega} = z_2|_{\{\hat{l}_\delta^{\,2}\}\times \omega}\bigr\}.
\end{align*}
Notice that the mapping $H_0^1(Q_\length) \to \Zcal_\delta$, $v\mapsto z$ induced by \eqref{rescaling_doublewell}
is a bijection, and that $\norm{z}_{L^2(I_\delta^1\times \omega)\times L^2(I_\delta^2\times \omega)}= \norm{v}_{L^2(Q_\length)}$.

In analogy to Section~\ref{subsubsec:inner_localization} we denote the quantities transformed according to~\eqref{rescaling_doublewell} with bars. 
Besides, we add subscript indices $"1"$ or $"2"$ to 
specify the change of variables used. 

The transformed energy
$\bar{E}_\delta:[L^2(\R\times \omega)]^2\to \overline{\R}$ takes the form
\begin{align*}
 \bar{E}_\delta[z] &= \bar{E}^1_\delta[z_1] + \bar{E}^2_\delta[z_2],
\end{align*}
where $\bar{E}_\delta^i:L^2(\R\times\omega)\to \overline{\R}$ for $i=1,2$ is finite if $z_i\in H^1(I_\delta^i\times \omega)$
with
\begin{align*}
 \bar{E}_\delta^i[z_i]=\int_{I_\delta^i\times\omega} \frac{a}{(\bar{\beta}_\delta)_i} \abs{\dot{z}_i 
+\delta^{1/4}\bar{\tau}_i(\nabla^x z_i\cdot Rx)}^2+ 
\frac{(\bar{\beta}_\delta)_i}{\delta^{3/2}} \left(a\abs{\nabla^x z_i}^2 - (\mu_C+ \delta h_0)\abs{z_i}^2 \right)\dd{t}\dd{x}.
\end{align*}

Let us introduce the limit functional
$\bar{E}_0: [L^2(\omega\times\R)]^2\to \overline{\R}$ given by
\begin{align*}
\bar{E}_0[z]=\begin{cases}
\displaystyle\sum_{i=1}^2\int_\R r\, \abs{\dot{\phi_i}}^2 +  q_i \abs{\phi_i}^2\dd{t}, & 
  \text{if $z(t,x)=w_C(x)\phi(t), \ (t,x) \in \R\times\omega,$} \\  &  \qquad \phi\in [H^1(\R)\cap L^2(\R;t^2\dd{t})]^2,\\[0.2cm]
 +\infty, & \text{otherwise,}
\end{cases}
\end{align*}
and let ($\eta_{L}^{(j)}, \phi_{L}^{(j)})$ be the $j$th eigenpair of the system of two one-dimensional harmonic oscillators 
\begin{align}\label{twooscillators}
 -r\ddot{\phi} + Q\phi= \eta \phi, \qquad\text{ $\phi\in [H^1(\R)\cap L^2(\R;t^2\dd{t})]^2$}
\end{align}
with $r=\int_\omega a w_C^2\dd{x}$ and $Q(t) = q_1(t)e_1\otimes e_1 + q_2(t)e_2\otimes e_2 \in\R^{2\times 2}$ for $t\in \R$, where 
$q_i(t)=\frac{1}{2}h''(s_i)t^2$, $i=1,2$.

Then, the asymptotic spectral analysis of~\eqref{rescaledEVP}-\eqref{EVPepsilondelta} with $\eps=1$ 
gives the following. 
\begin{theorem}
Suppose that $\xi\in C^2([0,\length];\R^2)$ and $h$ attains its global minimum $h_0$ 
at the interior points $s_1$ and $s_2$ with $s_1<s_2$.
For $j\in \N_0$, let $\{(\lambda_\delta^{(j)}, u_\delta^{(j)})\}_\delta$ 
be a sequence of $j$th eigenpairs for the spectral problem~\eqref{rescaledEVP}-\eqref{EVPepsilondelta} with $\eps=1$.
Then, for every $\delta>0$ one has
\begin{align*}
\lambda_\delta^{(j)}= \frac{\mu_C}{\delta^{2}} +\frac{h_0}{\delta}+ \frac{\eta_{\delta}^{(j)}}{\delta^{1/2}},
\qquad\text{where }\lim_{\delta\to 0}\eta^{(j)}_\delta=\eta_L^{(j)},
\end{align*}
and the sequence of eigenfunctions $\{u_\delta^{(j)}\}_\delta$ converges, up to a subsequence, in the following sense:
\begin{align*}
 \delta^{1/8}u_\delta^{(j)}\circ \psi_{\delta}\circ \varsigma_{\delta, s_i}=(z_\delta^{(j)})_i\quad\longrightarrow\quad 
z^{(j)}_i:=w_C \,(\phi^{(j)}_L)_i \quad \text{in $L^2(\R\times\omega)$ as $\delta \to 0$}
\end{align*} for $i=1,2$.
Here, $\psi_{\delta}$ is defined in~\eqref{rescaling}, and 
$\varsigma_{\delta, s_i}:I_\delta^i\times \omega \to J^i$ represents the changes of variables 
$\varsigma_{\delta, s_i}(t,x)=(\delta^{1/4}t+s_i,x)$ for $(t,x)\in I_\delta^i\times \omega$, where $J^1=[0, \hat{s}]$ and
$J^2=[\hat{s}, \length]$ with $\hat{s}=(s_2-s_1)/2$, and $I_\delta^i=\delta^{-1/4}(J^i-s_i)$.

Conversely, any such $z^{(j)}_i$ is the $L^2(\R\times \omega)$-limit of a sequence 
$\{\delta^{1/8}u_\delta^{(j)}\circ \psi_{\delta}\circ\varsigma_{\delta, s_i}\}_\delta$, $i=1,2$,
where $u_\delta^{(j)}$ is an eigenfunction of~\eqref{rescaledEVP}-\eqref{EVPepsilondelta} with $\eps=1$ corresponding to the eigenvalue $\lambda_\delta^{(j)}$. 
\end{theorem}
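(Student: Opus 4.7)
The plan is to mimic the strategy of Theorem~\ref{theo:localization}, but applied to the vector-valued rescaled problem on $\Zcal_\delta$. Namely, I would define
\begin{align*}
 E_\delta[v] = \tilde{E}_{1,\delta}[v] - \int_{Q_\length} \beta_\delta\frac{(\mu_C+\delta h_0)}{\delta^2}\abs{v}^2\dd{s}\dd{x}
\end{align*}
for $v\in H_0^1(Q_\length)$ (and $+\infty$ otherwise), and introduce $\bar{E}_\delta[z]=\delta^{1/2}E_\delta[v]$ on $[L^2(\R\times\omega)]^2$ via the bijection $v\leftrightarrow z=(z_1,z_2)$ defined by~\eqref{rescaling_doublewell}. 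Splitting the integral over $Q_\length=J^1\cup J^2$ and rewriting in the $t$-variable of each interval produces precisely the decomposition $\bar{E}_\delta[z]=\bar{E}_\delta^1[z_1]+\bar{E}_\delta^2[z_2]$ stated in the excerpt. The goal is then to verify the three conditions of Lemma~\ref{theo:Gammaconvergence} for $\{\bar{E}_\delta\}_\delta$ with limit $\bar{E}_0$, and to read off the conclusion from the correspondence between eigenpairs of $\bar{\Bcal}_\delta$ and those of $\Acal_{1,\delta}-\mu_C\delta^{-2}-h_0\delta^{-1}$.

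For the lower bound and the liminf inequality, I would apply Lemma~\ref{lem:asymptotics_mu_loc1} separately around each $s_i$ (using the two Taylor expansions of $h$ valid on $J^1$ and $J^2$ respectively). This gives $\bar{E}_\delta^i[z_i]\geq \int_{I_\delta^i\times\omega}\bar{\beta}_\delta(Kt^2-c\delta^{1/2})|z_i|^2\dd{t}\dd{x}$, hence the uniform lower bound and the weighted estimate $\int_{\R\times\omega}t^2|z_{\delta,i}|^2\dd{t}\dd{x}\leq C$ for bounded-energy sequences, exactly as in the single-point case. Summing over $i=1,2$ yields an $H^1$-bound for each component and the corresponding compactness via Rellich-Kondrachov on bounded $t$-intervals together with the $t^2$-weight on tails, reproducing Step~1 of the proof of Theorem~\ref{theo:localization} componentwise.

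The genuinely new issue, and what I expect to be the main obstacle, is the matching constraint $z_1|_{\{\hat{l}_\delta^{\,1}\}\times\omega}=z_2|_{\{\hat{l}_\delta^{\,2}\}\times\omega}$ encoded in $\Zcal_\delta$, which must dissolve in the limit so that the two components become independent functions in $H^1(\R)\cap L^2(\R;t^2\dd{t})$. Because $\hat{l}_\delta^{\,1}=\delta^{-1/4}(\hat{s}-s_1)\to+\infty$ and $\hat{l}_\delta^{\,2}=\delta^{-1/4}(\hat{s}-s_2)\to-\infty$, the matching is pushed off to infinity. Concretely, I would argue that for any bounded-energy sequence $\{z_\delta\}$, the weighted bound $\int t^2|z_{\delta,i}|^2\dd{t}\dd{x}\leq C$ together with the $H^1$-bound forces the $L^2(\omega)$-trace of $z_{\delta,i}$ at $\hat{l}_\delta^{\,i}$ to vanish in the limit; this justifies extending each $z_{\delta,i}$ by zero across $\hat{l}_\delta^{\,i}$ without affecting limits. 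The recovery sequence is then built in the standard separation-of-variables form $z_{\delta,i}(t,x)=\bar{w}_\delta^{\,i}(t,x)\phi_i(t)\chi_{\delta,i}(t)$, where $\bar{w}_\delta^{\,i}$ is the first normalized cross-section eigenfunction (rescaled around $s_i$), $\phi_i\in C_c^\infty(\R)$ is dense in $H^1(\R)\cap L^2(\R;t^2\dd{t})$, and $\chi_{\delta,i}$ is a cutoff enforcing the boundary and matching conditions of $\Zcal_\delta$; since $\phi_i$ has compact support and $\hat{l}_\delta^{\,i}\to\pm\infty$, the cutoff is trivial for $\delta$ small.

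Once Γ-convergence $\Gamma\text{-}\lim_{\delta\to 0}\bar{E}_\delta=\bar{E}_0$ in strong $[L^2(\R\times\omega)]^2$ is established together with the compactness and lower bound, Lemma~\ref{theo:Gammaconvergence} identifies the limit operator as $\bar{\Bcal}_0(w_C\phi_1,w_C\phi_2)=(w_C(-r\ddot\phi_1+q_1\phi_1),\,w_C(-r\ddot\phi_2+q_2\phi_2))$, which is precisely the decoupled pair of harmonic oscillators~\eqref{twooscillators}. Spectral convergence of $\bar{\Bcal}_\delta$ to $\bar{\Bcal}_0$ and the scaling relation $\lambda_\delta^{(j)}=\mu_C\delta^{-2}+h_0\delta^{-1}+\delta^{-1/2}\sigma_\delta^{(j)}$ between eigenpairs of $\Acal_{1,\delta}$ and $\bar{\Bcal}_\delta$ then yield the stated asymptotics for both $\lambda_\delta^{(j)}$ and $u_\delta^{(j)}\circ\psi_\delta\circ\varsigma_{\delta,s_i}$, componentwise for $i=1,2$, completing the proof.
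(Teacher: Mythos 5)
Your proposal is correct and follows essentially the same route as the paper: decompose the rescaled energy as $\bar{E}_\delta=\bar{E}_\delta^1+\bar{E}_\delta^2$, run the single-minimum argument of Theorem~\ref{theo:localization} componentwise via Lemma~\ref{lem:asymptotics_mu_loc1}, and observe that the coupling trace condition at $\hat{l}_\delta^{\,i}$ vanishes because the matching interface is pushed to infinity as $\delta\to 0$, which the paper itself identifies as the only genuinely new difficulty. You actually spell out slightly more than the paper's brief sketch (the $t^2$-weighted estimate for the trace and the $C_c^\infty$-density construction of the recovery sequence with a cutoff), but these are exactly the implicit steps the paper defers to the analogy with the single-point case.
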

\begin{remark}
Note that the spectrum $\{\eta_{L}^{(j)}\}_j$ of~\eqref{twooscillators} is the union of the
spectra of~\eqref{harmonicoscillator} in Section~\ref{subsubsec:inner_localization} with $s_0=s_1$ and $s_0=s_2$, respectively.  
Since the eigenfunctions of~\eqref{twooscillators} have contributions in both components,
oscillations around both minimum points of $h$ are observed, in general with different intensity and phase.
\end{remark}

\begin{proof}
The idea of the proof is to perform the arguments of the previous paragraph on localization at single interior points separately
for $\bar{E}_\delta^i$ with $s_0=s_i$, $i=1,2$, and merge them afterwards. The only difficulty is to mind the coupling non-zero boundary conditions
on $\{\hat{l}_\delta^i\}\times \omega$. Since the boundary moves to infinity as $\delta$ becomes small, this, however, does not affect 
the final result, meaning that one observes a splitting into two decoupled oscillators in the limit. 
 
As previously, we first prove that $\{\bar{E}_\delta\}_\delta$ satisfies the conditions of Lemma~\ref{theo:Gammaconvergence} and 
then apply it to derive the stated spectral behavior.
In analogy to the proof of Theorem~\ref{theo:localization}, one can then apply Lemma~\ref{theo:Gammaconvergence} to $\{\bar{E}_\delta\}_\delta$, which
implies the assertion.
\end{proof}

\subsubsection{Localization at the endpoints} 
In this paragraph, we consider the case where the global minimum of $h:[0,\length]\to \R$ is attained at one of the endpoints $s_0\in \{0, l\}$. 
Without loss of generality we choose $s_0=0$. Note that in the spirit of Section~\ref{subsubsec:multiple_localization} 
for multiple interior localization points, one can as well treat the case, 
where both endpoints $0$ and $\length$ are global minimizers of $h$.

Let us denote by $(\eta_{L}^{(j)}, \phi_{L}^{(j)})$ the $j$th eigenpair of 
the one-dimensional eigenvalue problem
\begin{align*}
 -r\ddot{\phi} + q \phi= \eta \phi, \qquad\text{$\phi \in H^1(\R^+)\cap L^2(\R^+;t\dd{t})$,}
\end{align*}
where $r=\int_\omega a w_C^2\dd{x}$, $q(t)=h'(0)t$ for $t\in \R^+$ and $L^2(\R^+;t\dd{t})$ is as defined in \eqref{def:L2t}.
With this definition the asymptotic behavior of the eigenvalue problem~\eqref{rescaledEVP}-\eqref{EVPepsilondelta} with $\eps=1$ can be 
expressed in the following theorem.
\begin{theorem}\label{theo:localization_multipleenpoints}
Suppose that $\xi\in C^1([0,\length];\R^2)$ and $h$ has a global minimum attained only at $0$ with $h'(0)\neq 0$. 
Here, $h'(0)$ denotes the right-sided derivative of $h$ at $0$.
For $j\in \N_0$, let $\{(\lambda_\delta^{(j)}, u_\delta^{(j)})\}_\delta$ 
be a sequence of $j$th eigenpairs for the spectral problem~\eqref{rescaledEVP}-\eqref{EVPepsilondelta} with $\eps=1$.
Then, for every $\delta>0$ one has
\begin{align*}
\lambda_\delta^{(j)}= \frac{\mu_C}{\delta^{2}} +\frac{h(0)}{\delta}+ \frac{\eta_{\delta}^{(j)}}{\delta^{2/3}},
\qquad\text{where }\lim_{\delta\to 0}\eta^{(j)}_\delta=\eta_L^{(j)},
\end{align*}
and the sequence of eigenfunctions $\{u_\delta^{(j)}\}_\delta$ converges, up to a subsequence, in the following sense:
\begin{align*}
 \delta^{1/6}u_\delta^{(j)}\circ \psi_{\delta}\circ \varsigma_{\delta}=z_\delta^{(j)}\quad\longrightarrow\quad 
z^{(j)}:=w_C \,\phi^{(j)}_L \qquad \text{in $L^2(\R^+\times\omega)$ as $\delta \to 0$.}
\end{align*} 
Here, $\psi_{\delta}$ is defined in~\eqref{rescaling}, and 
$\varsigma_{\delta}: [0,\delta^{-1/3}\length]\times \omega \to Q_\length$ is defined by 
$\varsigma_{\delta}(t,x)=(\delta^{1/3}t,x)$.

Conversely, any such $z^{(j)}$ is the $L^2(\R^+\times \omega)$-limit of a sequence 
$\{\delta^{1/6}u_\delta^{(j)}\circ \psi_{\delta}\circ\varsigma_{\delta}\}_\delta$, where $u_\delta^{(j)}$ is 
an eigenfunction of~\eqref{rescaledEVP}-\eqref{EVPepsilondelta} with $\eps=1$ corresponding to the eigenvalue $\lambda_\delta^{(j)}$. 
\end{theorem}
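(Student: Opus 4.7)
The plan is to mimic the proof of Theorem~\ref{theo:localization}, replacing the quadratic Taylor expansion around an interior minimum by the linear one around the endpoint minimum. Since $h$ attains its global minimum only at $0$ with $h'(0) \neq 0$, necessarily $h'(0) > 0$, yielding $h(s) - h(0) = h'(0) s + o(s)$ near $0$ and, by compactness, $h(s) - h(0) \geq c s$ on $[0, \length]$ for some $c > 0$. The balance between $\delta(h(s)-h(0))/\delta^2 \sim h'(0) s/\delta$ and the longitudinal kinetic energy forces the rescaling $s = \delta^{1/3} t$; the substitution $z(t,x) = \delta^{1/6} v(\delta^{1/3} t, x)$ preserves $L^2$-norms, and with
\begin{equation*}
E_\delta[v] = \tilde E_{1,\delta}[v] - \int_{Q_\length} \beta_\delta \frac{\mu_C + \delta h(0)}{\delta^2}|v|^2 \dd s \dd x, \qquad \bar E_\delta[z] := \delta^{2/3} E_\delta[v],
\end{equation*}
a direct computation gives, for $z \in H^1_0(I_\delta \times \omega)$ with $I_\delta = [0, \delta^{-1/3}\length]$,
\begin{equation*}
\bar E_\delta[z] = \int_{I_\delta \times \omega} \frac{a}{\bar\beta_\delta}|\dot z + \delta^{1/3}\bar\tau(\nabla^x z \cdot Rx)|^2 + \frac{\bar\beta_\delta}{\delta^{4/3}}\bigl(a|\nabla^x z|^2 - (\mu_C + \delta h(0))|z|^2\bigr) \dd t \dd x.
\end{equation*}

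An analog of Lemma~\ref{lem:asymptotics_mu_loc1} for the linear case is established by combining Proposition~\ref{lem:asymptotics_mu_inhomo} with the Taylor expansion of $h$, giving
\begin{equation*}
\frac{\bar\mu_\delta(t) - (\mu_C + \delta h(0))}{\delta^{4/3}} \geq c t - c'\delta^{1/3}, \qquad t \in I_\delta,
\end{equation*}
together with uniform convergence to $h'(0) t$ on compact subsets of $[0, \infty)$. Via the Rayleigh characterization of $\bar\mu_\delta(t)$, this yields the lower bound $\bar E_\delta[z] \geq -c\|z\|_{L^2}^2$ and, crucially, the weighted estimate $\int_{\R^+ \times \omega} t|z_\delta|^2\dd t\dd x \leq C$ for every bounded-energy sequence (after trivial extension by zero). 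The weight $t$ still suffices for the tail control $\int_{\{t \geq \rho\} \times \omega}|z_\delta|^2 \dd t \dd x \leq C/\rho$, which combined with Rellich--Kondrachov on $[0,\rho] \times \omega$ gives strong $L^2(\R^+ \times \omega)$-convergence of a subsequence, exactly as in Theorem~\ref{theo:localization}.

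The $\Gamma$-convergence $\Gamma\text{-}\lim_\delta \bar E_\delta = \bar E_0$ in the strong $L^2(\R^+ \times \omega)$-topology then follows as in Proposition~\ref{prop:Gammaconvergence_Heps}, with $\bar E_0[z] = \int_{\R^+} r|\dot\phi|^2 + h'(0) t |\phi|^2 \dd t$ when $z(t,x) = w_C(x)\phi(t)$ with $\phi$ in the class of the theorem (enforcing $\phi(0)=0$) and $+\infty$ otherwise; the liminf uses the splitting into the first cross-section eigendirection and its $L^2(\omega)$-orthogonal, while the recovery sequence is built from $C^\infty_c((0,\infty))$-approximants of $\phi$, multiplied by the local cross-section eigenfunctions $\bar w_\delta$ and truncated within $I_\delta$. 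Lemma~\ref{theo:Gammaconvergence} then converts $\Gamma$-convergence into spectral convergence, and the identification $z_\delta = \delta^{1/6} v_\delta \circ \varsigma_\delta$, $\sigma_\delta = \delta^{2/3}\eta_\delta$ between eigenpairs of $\bar\Bcal_\delta$ and those of $\Bcal_\delta := \Acal_{1,\delta} - \mu_C\delta^{-2} - h(0)\delta^{-1}$ delivers the stated asymptotic expansion for $\lambda_\delta^{(j)}$.

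The main subtlety, absent in the interior case, is the Dirichlet boundary at $t = 0$: the approximants $z_\delta$ vanish on $\{0\} \times \omega$, so any weak $H^1$-limit inherits $\phi(0) = 0$, and this trace condition must be built into the effective domain of the limit operator (implicit in the notation $H^1(\R^+) \cap L^2(\R^+; t\,\dd t)$). Verifying that $C^\infty_c((0,\infty))$ is a dense core in the graph norm of the weighted Sturm--Liouville operator $-r \ddot\phi + h'(0) t\, \phi$, so that the recovery-sequence argument applies to every limit eigenfunction while simultaneously respecting the moving right endpoint $t = \delta^{-1/3}\length \to \infty$, is the delicate point of the construction; modulo this density statement, the remainder of the proof is structurally identical to that of Theorem~\ref{theo:localization}.
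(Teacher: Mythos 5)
Your proposal takes essentially the same route as the paper: a linear Taylor expansion of $h$ at the endpoint with the two-sided bound $Ks\leq h(s)-h(0)\leq s/K$, the change of variables $t=\delta^{-1/3}s$ with $z=\delta^{1/6}v$, and then the transfer, word for word, of the weighted-compactness and $\Gamma$-convergence machinery of Theorem~\ref{theo:localization} via Lemma~\ref{theo:Gammaconvergence} — which is precisely what the paper's (much terser) proof does. Your extra details (the explicit rescaled functional with scaling $\delta^{2/3}$, the linear-potential analog of Lemma~\ref{lem:asymptotics_mu_loc1} giving the weight $t$ instead of $t^2$, the eigenpair identification $\sigma_\delta=\delta^{2/3}\eta_\delta$, and the remark on the Dirichlet trace at $t=0$) are consistent with and more explicit than what the paper records.
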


\begin{proof}
Developing $h$ with $h'(0)>0$ around the left endpoint $0$ by Taylor expansion, one obtains
\begin{align*}
 h(s) =h(0) + h'(\tilde{s})s, \qquad s\in (0,\length],  
\end{align*}
with $\tilde{s}\in (0, s)$. There is a constant $K>0$ such that for all $s\in [0,l]$,
\begin{align*}
 Ks\leq \ h(s)-h(0)\ \leq \frac{1}{K}s.
\end{align*}
We perform a change of variables replacing $s\in [0,\length]$ by
\begin{align*}
 t=\delta^{-1/3}s, \qquad t\in I_\delta:= [0,\delta^{-1/3}\length],
\end{align*}
and let 
\begin{align*}
 z(t,x)= \delta^{1/6}v(s,x), \qquad t\in I_\delta, \ s\in [0,\length],\ x\in \omega.
\end{align*} 
From this point on, one may proceed as in the proof of Theorem~\ref{theo:localization} in Section~\ref{subsubsec:inner_localization}.
\end{proof}

\appendix
\section{Explicit formulas for the fourth-order coefficients}\label{appendix_A}
\subsection{The torsion-free case}\label{appendix_A1} For the sake of completeness, we give here the precise definition 
of the fourth-order coefficient $w_4$ in the development $w_\eps^\ast$ of $w_\eps$ in~\eqref{asymptotic_expansion}. 
By formal computation one obtains that for $s\in[0,l]$, $x\in \omega$, and the fast variable $y=x/\eps$,
\begin{align}\label{w4}
w_4(s,x,y)&=\Pi_{ijkl}(y)\partial^x_{ijkl}w_H(x) + \Lambda_{ijk}(y)\partial_{ijk}^x\bar{w}_1(s,x) 
+ \zeta_{ij}(y)\partial_{ij}^x\bar{w}_2(s,x)\nonumber \\
&\qquad+ \nu_{ijk}(y)\xi_k(s)\partial_{ij}^x w_H(x) + \varkappa_{ij}(y)\xi_j(s)(\xi(s)\cdot x)\partial_i^x w_H(x) \\
&\qquad+ \varkappa_{ij}(y)\xi_j(s)\partial_i^x \bar{w}_1(s,x) 
+ Q_{ij}\varrho_k(y)\partial_{ijk}^x \bar{w}_1(s,x) +\mu_H\varrho_i(y)\partial^x_i\bar{w}_1(s,x),\nonumber
\end{align}
with $i,j,k,l= 1,2$. Except for the following three expressions, the terms in~\eqref{w4} were defined in Section~\ref{subsubsec:formal_expansion}:

\noindent $\Pi=(\Pi_{ijkl})\in H^1_\#(Y;\R^{16})$ with $\int_Y \Pi\dd{y}=0$ solves
\begin{align*}
-\diverg^y(a \nabla^y \Pi_{ijkl}) =  \partial_l^y(a\Lambda_{ijk}) +a\partial^y_l \Lambda_{ijk} + a\delta_{ij}\zeta_{kl} -R_{ijkl}-Q_{ij}\zeta_{kl};
\end{align*}
$\nu=(\nu_{ijk})\in H_\#^1(Y;\R^{8})$ with $\int_Y \nu\dd{y}=0$ solves
\begin{align*}
-\diverg^y(a \nabla^y \nu_{ijk}) = \partial^y_j( a\varkappa_{ik}) + a\partial^y_{j}\varkappa_{ik}-a\partial_k^y \zeta_{ij} 
- a\phi_i\delta_{jk} -S_{ijk};
\end{align*}
$\varrho=(\varrho_{i})\in H_\#^1(Y;\R^{2})$ with $\int_Y \varrho\dd{y}=0$ solves
\begin{align*}
-\diverg^y(a \nabla^y \varrho_{i}) =  \phi_{i}.
\end{align*}

\subsection{The general case with torsion}\label{appendix_A2} If $\tau\neq 0$, the coefficient of order four in the full 
asymptotic expansion~\eqref{ansatz:full_asymptotic_expansion} 
is $v_4=w_4\varphi_P + \bar{v}_4$. For $s\in[0,l]$, $x\in \omega$, and $y=x/\eps$ the term $w_4(s,x,y)$ is as in~\eqref{w4} and
\begin{align*}
 \bar{v}_4(s,x,y)=\tau^2(s) \bar{v}(s,x,y)\varphi_P(s) + \tau(s) \hat{v}(s,x,y)\varphi_P'(s) + \tau'(s)\check{v}(s,x,y)\varphi_P(s).
\end{align*}
Here,
\begin{align*}
 \bar{v}(s,x,y)&= \bigl(\zeta_{ij}(y)\delta_{kl}+\upsilon_{ijkl}(y) +\eta_{ijkl}(y)\bigr)(Rx)_j(Rx)_k\partial^x_{il}w_H(x) \\ &\qquad
+ \eta_{ijkl}(y)\partial_l^x\bigl((Rx)_j(Rx)_k\bigr)\partial_i^x w_H(x)+  2\vartheta_{ijk}(y)(\xi(s)\cdot x)(Rx)_j(Rx)_k\partial_i^xw_H(x)\\ 
& \qquad
+ \vartheta_{ijk}(Rx)_j(Rx)_k\partial_i^x\bar{w}_1(s,x),\\
 \hat{v}(s,x,y)&=  \bigl(\zeta_{ij}(y)+ \zeta_{ji}(y)\bigr)(Rx)_j\partial^x_i w_H(x) 
+ 2(\xi(s)\cdot x)(Rx)_i\phi_i(y)w_H(x) \\ 
&\qquad+ \partial_j^x(Rx)_i\zeta_{ij}(y)w_H(x)+ (Rx)_i\phi_i(y)\bar{w}_1(s,x), \\
 \check{v}(s,x,y)&= -\varkappa_{ij}(y)(Rx)_j\partial^x_{i}w_H(x),
\end{align*}
for $i,j,k,l=1,2$, with the definitions of Sections~\ref{subsubsec:formal_expansion} and~\ref{sec:propagation}, and the following two expressions:

\noindent $\eta=(\eta_{ijkl})\in H^1_\#(Y;\R^{16})$ with $\int_Y \eta\dd{y}=0$ solves
\begin{align*}
-\diverg^y(a \nabla^y \eta_{ijkl}) =  \partial_l^y(a\vartheta_{ijk}) +a\partial^y_l \vartheta_{ijk} - \int_{y}a\partial^y_l \vartheta_{ijk}\dd{y};
\end{align*}
$\upsilon=(\upsilon_{ijk})\in H_\#^1(Y;\R^{8})$ with $\int_Y \upsilon\dd{y}=0$ solves
\begin{align*}
-\diverg^y(a \nabla^y \upsilon_{ijkl}) = \partial^y_k( a\partial^y_j\zeta_{il}).
\end{align*}

\section{Boundary value estimate for the asymptotic expansion}\label{appendix_B}
Here we give a detailed proof of estimate~\eqref{est:boundary}, i.e.~
\begin{align*}
 \norm{w^\ast_\eps}_{H^{1/2}(\partial \omega)} \le C \eps^{1/2}
\end{align*}
for all $\eps>0$ sufficiently small. Recall that $w^\ast_\eps$ was defined in~\eqref{asymptotic_expansion}. 
The arguments follow closely along the lines of~\cite[Chapter~7.2]{CioDon99}. 

Let us define a family of smooth cut-off functions $\{m_\eps\}_\eps\subset C^\infty(\omega;[0,1])$ such that
$m_\eps=1$ if $\dist(x, \partial\omega)\leq \eps$, $\supp m_\eps\subset U_\eps:=\{x\in \omega\,:\, \dist(x, \partial\omega)< 2\eps\}$ and 
$\norm{\nabla m_\eps}_{L^\infty(\omega)}\leq C\eps^{-1}$. Due to $w_H=0$ on $\partial\omega$, the function 
\begin{align*}\varphi_\eps:=m_\eps (w_\eps^\ast-w_H)\in H^1(\omega)\end{align*} coincides on $\partial\omega$ in the sense of traces with
$w_\eps^\ast=w_\eps^\ast(s)=w_\eps^\ast(s, \cdot)$. Here again, we keep $s\in [0,l]$ fixed, but dispense with marking the dependence on $s$ in our notation. 
The trace theorem implies the existence of a constant $c>0$ such that
\begin{align}\label{estimate_CioDon}
\norm{w_\eps^\ast}_{H^{1/2}(\partial\omega)}= \norm{\varphi_\eps}_{H^{1/2}(\partial\omega)}\leq c\norm{\varphi_\eps}_{H^1(\omega)} 
=c\norm{\varphi_\eps}_{H^1(U_\eps)}.
\end{align}
Towards estimating $\varphi_{\eps}$ in the $H^1(U_\eps)$-norm, we observe first that
\begin{align}\label{L2varphi}
\norm{\varphi_\eps}_{L^2(U_\eps)} \leq C\eps.
\end{align}
For $\partial_j w_1$, $j=1, 2,$ with $w_1$ as in~\eqref{defw123} and $y=x/\eps$, one obtains
\begin{align*}
\partial_j w_1(x)=\frac{1}{\eps}\partial_j^y\phi_i(x/\eps)\partial_i^xw_H(x) +\phi_i(x/\eps)\partial_{ij}^x w_H(x) + \partial_j^x \bar{w}(x) 
+\xi_i\partial_j^x\hat{w}_i(x),\qquad x\in \omega,
\end{align*}
so that by the $C^\infty$-regularity of $w_H$, $\bar{w}$ and $\hat{w}$ and in view of the regularity assumption \textit{(H1)} on $\phi$, i.e.\ 
$\phi\in W_{\#}^{1, \infty}(Y;\R^2)$,
\begin{align*}
\norm{\nabla w_1}_{L^2(U_\eps;\R^2)}\leq \eps^{-1}\norm{\phi}_{W^{1,\infty}(Y;\R^2)} \norm{\nabla w_H}_{L^2(U_\eps)} +c\leq C\eps^{-1}\norm{w_H}_{H^1(U_\eps)}+c.
\end{align*}
For $\partial_k w_2$, $k=1,2,$ with $w_2$ as in~\eqref{defw123} and $y=x/\eps$, we have
\begin{align*}
\partial_k w_2(x)&=\frac{1}{\eps}\partial_k^y\zeta_{ij}(x/\eps)\partial_{ij}^xw_H(x) + \zeta_{ij}(x/\eps)\partial_{ijk}^xw_H(x) 
+ \frac{1}{\eps}\partial_k^x\phi_i(x/\eps)\partial_i^x\bar{w}_1(x) \\
&\qquad+\phi_i(x/\eps)\partial_{ik}^x\bar{w}_1(x)+\partial_k^x\bar{w}_2(x),\qquad x\in \omega,
\end{align*}
which in view of  $w_H, \bar{w}_1, \bar{w}_2\in C^\infty(\bar{\omega})$ and the fact that $\zeta$ and $\phi$ are $H^1_\#$-functions gives
\begin{align*}
\norm{\nabla w_2}_{L^2(U_\eps;\R^2)}\leq C\eps^{-1} +c.
\end{align*} 
For $w_n$, $n=3,4$, analogous arguments imply $\norm{\nabla w_n}_{L^2(U_\eps;\R^2)} \leq C\eps^{-1}+c$.
Summing up, 
\begin{align}\label{est20}
\norm{\nabla (w_\eps^\ast-w_H)}_{L^2(U_\eps;\R^2)} \leq c\norm{w_H}_{H^1(U_\eps)}+C\eps.
\end{align}
Moreover, from the structure of $w_\eps^\ast$ and the fact that $\abs{U_\eps}\leq \alpha \eps$ for some $\alpha>0$, depending only on $\omega$, we may conclude that, for $\eps>0$ small enough,
\begin{align}\label{est21}
\norm{w_\eps^\ast-w_H}_{L^2(U_\eps;\R^2)} &\leq \eps \norm{\phi}_{L^\infty(Y;\R^2)} \norm{\nabla w_H}_{L^2(U_\eps;\R^2)} + \eps\norm{\bar{w}_1}_{L^2(U_\eps)} +C\eps^2 \nonumber\\ &\leq c\eps\norm{w_H}_{H^1(U_\eps)}+C\eps^{3/2}. 
\end{align}
Joining~\eqref{L2varphi},~\eqref{est20} and~\eqref{est21}, while accounting for the properties of $m_\eps$,
we find
\begin{align}\label{est7}
\norm{\varphi_\eps}_{H^1(U_\eps)} \leq c\norm{w_H}_{H^1(U_\eps)}+C\eps^{1/2}.
\end{align}
On the other hand, the result of~\cite[Lemma~1.5]{Oleinik}, which is formulated for domains with smooth boundary, but extends to arbitrary 
Lipschitz domains (see~Remark~\ref{rem:Lipschitzboundary}), yields
\begin{align*} 
\norm{w_H}_{H^1(U_\eps)}\leq c\eps^{1/2}\norm{\nabla w_H}_{H^1(\omega;\R^2)}\leq c\eps^{1/2}
\end{align*}
for sufficiently small $\eps$. Together with~\eqref{estimate_CioDon} and~\eqref{est7}, this concludes the proof of estimate~\eqref{est:boundary}.

\begin{remark}\label{rem:Lipschitzboundary}
To be precise,~\cite[Lemma~1.5]{Oleinik} says that if $\omega$ is a bounded domain with a smooth boundary, then for all $\eps>0$ sufficiently small and every $w\in H^1(\omega)$,
\begin{align*}
\norm{w}_{L^2(U_\eps)}\leq c\eps^{1/2}\norm{w}_{H^1(\omega)}
\end{align*}
with a constant $c>0$ depending only on $\omega$. 
We would like to point out that the same statement still holds true, if $\omega$ is only a Lipschitz domain. 
To see this, we approximate $\partial \omega$ by $\Scal_\delta:=\partial \{x\in \omega:{\rm dist}(x, \partial \omega)< \delta\}$ for $\delta\to 0$. 
Since $\Scal_\delta$ can be viewed as the level 
sets of a suitable Lipschitz function provided $\delta$ is small enough, the trace theorem in conjunction with the coarea formula 
yields the asserted estimate. Indeed,
\begin{align*}
\norm{w}_{L^2(U_\eps)}^2=\int_0^{\eps}\left(\int_{\Scal_\delta}\abs{w}^2\dd{\Hcal^{1}}\right)\dd{\delta}\leq c\eps\norm{w}_{H^1(\omega)}^2
\end{align*}
for all $w\in H^1(\omega)$.
\end{remark}


\section*{Acknowledgments}
\sloppypar
The authors would like to express their thanks to Lu\'{i}s Trabucho for several stimulating conversations and to Christoph Kreisbeck
for his helpful comments on the physics background of the problem. 
C.K.\ was supported by the Funda\c c\~{a}o para a Ci\^{e}ncia e a Tecnologia 
through the ICTI CMU-Portugal Program in Applied Mathematics and UTA-CMU/MAT/0005/2009. Part of this research was carried out while C.K. was
at Universit\"at Regensburg. Traveling funds for C.K. provided by CMA are thankfully acknow\-ledged. 
L.M. was supported by the Funda\c c\~ao para a Ci\^{e}ncia e a Tecnologia,
through PEst-OE/MAT/UI0297/2011, PTDC/MAT109973/2009, and UTA-CMU/MAT/0005/2009.


\end{document}